\documentclass[11pt]{amsart}   	
\usepackage{geometry}             
\usepackage[dvipsnames]{xcolor}   		             		
\usepackage{graphicx}				
\usepackage{amssymb,mathrsfs}
\usepackage{amsthm,amsmath,stmaryrd}
\usepackage{tikz}
\usepackage{tikz-cd}
\usepackage{enumerate}
\usepackage[headings]{fullpage}
\usepackage{bm}
\usepackage{mathtools}
\usepackage[all]{xy}
\usepackage{caption}
\usepackage[euler-digits]{eulervm}
\usepackage[normalem]{ulem}
\usepackage{ marvosym }

\tikzset{
  commutative diagrams/.cd, 
  arrow style=tikz, 
  diagrams={>=stealth}
}


\newenvironment{customthm}[1]
  {\innercustomthm}
  {\endinnercustomthm}

  \makeatletter
\def\@tocline#1#2#3#4#5#6#7{\relax
  \ifnum #1>\c@tocdepth 
  \else
    \par \addpenalty\@secpenalty\addvspace{#2}%
    \begingroup \hyphenpenalty\@M
    \@ifempty{#4}{%
      \@tempdima\csname r@tocindent\number#1\endcsname\relax
    }{%
      \@tempdima#4\relax
    }%
    \parindent\z@ \leftskip#3\relax \advance\leftskip\@tempdima\relax
    \rightskip\@pnumwidth plus4em \parfillskip-\@pnumwidth
    #5\leavevmode\hskip-\@tempdima
      \ifcase #1
       \or\or \hskip 1em \or \hskip 2em \else \hskip 3em \fi%
      #6\nobreak\relax
    \dotfill\hbox to\@pnumwidth{\@tocpagenum{#7}}\par
    \nobreak
    \endgroup
  \fi}
\makeatother

\usetikzlibrary{calc}
\usetikzlibrary{fadings}
\usetikzlibrary{decorations.pathmorphing}
\usetikzlibrary{decorations.pathreplacing}

\newcounter{marginnote}
\setcounter{marginnote}{0}

\setlength{\parskip}{4pt}

\DeclareMathAlphabet{\mathpzc}{OT1}{pzc}{m}{it}

\usepackage[backref=page]{hyperref}
\hypersetup{
  colorlinks   = true,          
  urlcolor     = violet,          
  linkcolor    = blue,          
  citecolor   = violet             
}

\newtheorem{theorem}{Theorem}[subsection]
\newtheorem{corollary}[theorem]{Corollary}
\newtheorem{lemma}[theorem]{Lemma}
\newtheorem{proposition}[theorem]{Proposition}

\newtheorem{quasi-theorem}[theorem]{Quasi-Theorem}

\theoremstyle{definition}
\newtheorem{definition}[theorem]{Definition}

\newtheorem{remark}[theorem]{Remark}

\newtheorem{problem}[theorem]{Problem}

\newtheorem{construction}[theorem]{Construction}
\newtheorem{terminology}[theorem]{Terminology}
\newtheorem{example}[theorem]{Example}
\newtheorem{blank remark}[theorem]{}

\newtheorem{not1}[theorem]{Notation}

\newcommand{\A}{{\mathbb{A}}}

\newcommand{\PP}{\mathbb{P}}         
\newcommand{\QQ} {{\mathbb Q}}		
\newcommand{\RR} {{\mathbb R}}		
\newcommand{\ZZ} {{\mathbb Z}}







\def\DR{\mathsf{DR}_g}
\def\sV{\mathsf{V}}
\def\TC{\mathsf{TC}_g}



\newcommand{\cal}{\mathcal}

\def\cM{{\cal M}}




\newcommand{\plC}{\scalebox{0.8}[1.3]{$\sqsubset$}}



\newcommand{\Mbar}{\overline{\cM}\vphantom{\cM}}

\def\trop{\mathrm{trop}}






\makeatletter
\def\blfootnote{\xdef\@thefnmark{}\@footnotetext}
\makeatother

\title[Intersections on blowups of $\Mbar_{g,n}$]{A case study of intersections on blowups of the moduli of curves}

\date{}

\author{Sam Molcho {\it \&} Dhruv Ranganathan}

\address{Sam Molcho \\ Department of Mathematics\\
ETH Z\"urich, Switzerland}
\email{\href{mailto:samouil.molcho@math.ethz.ch}{samouil.molcho@math.ethz.ch}}

\address{Dhruv Ranganathan \\ Department of Pure Mathematics {\it \&} Mathematical Statistics\\
University of Cambridge, UK}
\email{\href{mailto:dr508@cam.ac.uk}{dr508@cam.ac.uk}}

\begin{document}

\maketitle

\begin{abstract}
We explain how logarithmic structures select principal components in an intersection of schemes. These manifest in Chow homology and can be understood using strict transforms under logarithmic blowups. Our motivation comes from Gromov--Witten theory. The \textit{toric contact cycles} in the moduli space of curves parameterize curves that admit a map to a fixed toric variety with prescribed contact orders. We show that they are intersections of virtual strict transforms of double ramification cycles in blowups of the moduli space of curves. We supply a calculation scheme for the virtual strict transforms, and deduce that toric contact cycles lie in the tautological ring of the moduli space of curves. This is a higher dimensional analogue of a result of Faber--Pandharipande. The operational Chow rings of Artin fans play a basic role, and are shown to be isomorphic to rings of piecewise polynomials on associated cone complexes. The ingredients in our analysis are Fulton's blowup formula, Aluffi's formulas for Segre classes of monomial schemes, piecewise polynomials, and degeneration methods. A model calculation in toric intersection theory is treated without logarithmic methods and may be read independently. 
\end{abstract}

\vspace{0.3in}
\setcounter{tocdepth}{1}
\tableofcontents

%

\section{Introduction}

Logarithmic geometry plays a central role in the construction of compact moduli spaces in algebraic geometry. In recent years, researchers have applied these techniques to study a range of questions in enumerative geometry and the moduli space of curves, and our work is especially motivated by~\cite{Herr,Hol17,HPS19,MW17,MR20,MW18,NR19,R19}. These results bring into sharp focus a basic phenomenon: logarithmic moduli spaces arise naturally as an infinite collection of schemes assembled in an inverse system. The system is analogous to the system of equivariant compactifications of a fixed torus. It occurs frequently that the inverse system does not contain a minimal model, and even when it does, expected intersection theory statements may fail on the minimal model but hold on finer members of the system. {Working with the whole inverse system reveals additional structure. \footnote{We are by no means the first to notice this, as Mumford puts it: ``the non-uniqueness (of compactification) gives one freedom to seek for the most elegant solutions in any particular case.", see~\cite{Mum72}}} Instances include the following:

\begin{enumerate}[(i)]
\item The \textbf{logarithmic Picard group} is not {representable} by a scheme with a logarithmic structure; its geometry is fully captured by an inverse system of toroidal compactifications of semiabelian schemes. In what appears to be an analogous situation, no minimal model of the \textbf{logarithmic Hilbert scheme of curves} is known~\cite{MR20,MW18}.
\item {Toroidal compactifications of the \textbf{moduli space of Abelian varieties} appear in an inverse system without a minimal representative; the situation is analogous to that of the logarithmic Picard group~\cite{KKN20}.}
\item The naive \textbf{degeneration formula} in logarithmic Gromov--Witten theory fails to hold for the minimal model of the space of stable maps, but holds on sufficiently fine models~\cite{R19}. The situation in logarithmic Donaldson--Thomas theory is expected to be similar~\cite{MR20}. 
\item The naive \textbf{product formulas} in logarithmic Abel--Jacobi theory and Gromov--Witten theory for logarithmic and rubber geometries fail to hold on the space of stable maps, but hold on sufficiently fine models~\cite{Herr,HPS19,NR19}. 
\end{enumerate}

This text provides a framework to understand these phenomena. We illustrate, by means of a detailed case study on the moduli space of curves and the problem (iv) above, the intersection theory of cycles on such inverse systems. We deduce that certain natural cohomology classes, the \textit{toric contact cycles}, lie in the tautological ring of the moduli space of curves, see Theorem~\ref{thm: toric-contact-cycles}. 

We first outline the basic geometry and then state the main results.
 

\subsection{The idealized problem} We contemplate a smooth, non-compact moduli space $U$ of non-degenerate objects, and a system of logarithmic compactifications $U \subset Y_\alpha$, indexed by a combinatorial datum $\alpha$, usually a choice of polyhedral structure on a topological space. In the ideal scenario, each compactification contains $U$ as the complement of a normal crossings divisor. Any two compactifications $Y_\alpha,Y_\beta$ in the system are dominated by a third $Y_\gamma$, via a birational morphism: an iteration of blowups and root stack constructions on strata in the complement of $U$. 

We examine intersections of cycles that occur on spaces in the inverse system. Intersection theory requires us to either pick a compactification arbitrarily or work with the system of all compactifications simultaneously; our paper concerns the comparison between the two. In the latter case, the Chow homology and cohomology groups may be defined formally as limits and colimits of the corresponding groups of the system. A pair $(Y_\alpha,V_\alpha)$ of a compactification of $U$ and a cycle $V_\alpha$ on it determines a compatible system of classes in the limit Chow groups, by pulling back along blowups. By cardinality considerations, comparatively few classes in limit Chow homology arise in this way. 

Nevertheless, in logarithmic moduli problems, birational invariance statements imply that many classes of interest do arise by pullback of a cycle along \textit{some} element~\cite{AW,MR20,R19}. If this holds, we are left to understand the intersection product of cycles $V_\alpha$ and $V_\beta$ on different models. There are two possibilities for this intersection product, and we illustrate the geometry in the simplest situation. 

\subsection{The toric specialization} Let $U$ be a torus, and $V_1,V_2$ two subvarieties intersecting transversely. On any smooth toric compactification $U\subset Y$, we may form the closures  $$\overline V^Y_1, \ \  \overline V^Y_2 \subset Y$$ to obtain
\[
V_1\cdot_Y V_2:=\overline V^Y_1\cdot \overline V^Y_2 \ \ \textrm{in } \mathsf{CH}_{\star}(Y). 
\]
Compact models are related by blowups and blow-downs along smooth centers. Given a blowup $\pi:Y'\to Y$, the classes $V_1\cdot_Y V_2$ and $V_1\cdot_{Y'} V_2$ are typically \textit{not} related either by pushforward or pullback. The closures in different compactifications are instead related by \textit{strict transforms}.

By an elementary but crucial argument, if the pair of cycles is fixed, there is a \textit{stable} answer: if $Y$ is any \textit{sufficiently fine} toric compactification of $U$, the intersection product is a well-defined cycle class, independent of the choice of compactification. In other words, if $Y$ is replaced by a blowup along a smooth center, the class is pulled back. The question of whether a given compactification is fine enough is a tropical question, answered by a theorem of Tevelev~\cite{Tev07}: the tropicalizations of $V_1$ and $V_2$ must each be unions of cones in the fan of $Y$. 

We view the stable class $[V_1\cdot_Y V_2]$ as a prototype for the \textit{logarithmic intersection product} and view it as an element in the direct limit of Chow cohomology groups of all toric compactifications of $U$. 

\noindent
\textbf{A Question.} {\it How does the class of the logarithmic intersection product differ from the ordinary intersection product on an arbitrarily chosen model $Y$ that is not sufficiently fine?}

We study this problem in a more general context, replacing toric varieties and cycles with logarithmically flat schemes, and examine intersections with logarithmic local complete intersection morphisms. Our analysis involves four tools: (i) the resolution package for toroidal schemes and morphisms~\cite{AK00,AKMW,ALT18,Mol16}, (ii) Fulton's blowup formula~\cite{Ful84}, (iii) Aluffi's formula for Segre classes of monomial subschemes~\cite{Alu13}, and (iv) the ring of piecewise polynomial functions on a logarithmic scheme~\cite{Br96,Pay06}. The ingredients are combined by using Artin fans~\cite{ACMUW, ACMW, AW}.

\subsection{The moduli space of curves} After the formalism has been set up, we illustrate the geometry by a detailed case-study on the moduli space of curves. Fix a toric variety $X$ of dimension $r$ and consider smooth $n$-pointed curves of genus $g$ with a map to $X$, where each marked point has a fixed contact order with each toric divisor. All contact points with the boundary will be marked and two maps are considered equivalent if they are translates under the torus action. Logarithmic geometry gives rise to a compactification of this cycle, known as the \textit{toric contact cycle}. 

\begin{customthm}{A}\label{thm: toric-contact-cycles}
The toric contact cycles are contained in the tautological ring of the moduli space of curves. 
\end{customthm}

The class arises naturally on the moduli space of logarithmic stable maps to $X$, see Remark~\ref{rem: tc-gwt}. When $X$ is $\mathbb P^1$ the statement was proved by Faber--Pandharipande, and their result is an ingredient in ours~\cite{FP}. The result might be viewed as saying that there is hope in finding a \textit{formula} for the class. Elementary examples in genus $1$ and $2$, can be computed using our methods, however a complete formula demands further study.

Consider the inverse system of blowups of $\Mbar_{g,n}$ along boundary strata. The space of stable maps to an unparameterized $\mathbb P^1$ -- relative to $0$ and $\infty$ -- lifts to a system of classes on the blowups, together called the logarithmic double ramification cycle, compatible under pushforward. The toric contact cycles are not products of double ramification cycles on $\Mbar_{g,n}$, but are products on sufficiently refined blowups. On such a blowup 
\[
\Mbar_{g,n}'\to \Mbar_{g,n}
\]
the lifted cycles can be understood as \textit{virtual strict transforms} of the double ramification cycles. We control virtual strict transforms and calculate them algorithmically in terms of the standard $\Mbar_{g,n}$ package -- tautological classes, strata of the double ramification cycle, and normal bundles to the exceptional strata. Fulton computes ordinary strict transforms via Segre classes, and our virtual strict transforms are calculated via virtual Segre classes. The latter are virtual pullbacks of Segre classes defined in a universal situation. This type of analysis is expected to occur frequently in nature, see Section~\ref{sec: related-problems}.

\subsection{Piecewise polynomials} When working with cycles on the inverse system of toric compactifications of a fixed torus, the intersection theory can be understood in combinatorial terms, and this is presented in Section~\ref{sec: toric-int-thy}. The generalization to logarithmic schemes follows this path. 

The first observation is that logarithmic schemes come equipped with a natural collection of cohomology classes. An \textit{Artin cone} is the stack quotient of an affine toric variety by its dense torus and an \textit{Artin fan} is a logarithmic algebraic stack that is logarithmically \'etale over a point and admits a strict cover by Artin cones. Under mild assumptions, every logarithmic scheme $Y$ admits a strict morphism to an Artin fan~\cite[Section~3]{ACMW}. If $\mathsf A$ is an Artin fan there is an associated \textit{tropicalization}, a stack over the category of cone complexes, denoted $\Sigma$ and constructed in~\cite{CCUW}. 

\begin{customthm}{B}
The operational Chow cohomology group of $\mathsf A$ with rational coefficients is canonically isomorphic to the ring of rational piecewise polynomial functions on $\Sigma$.
\end{customthm}

If $\mathsf A$ is a global quotient of a toric variety by its dense torus, this identification was established by Payne~\cite{Pay06}. If, in addition, the stack $\mathsf A$ is smooth, the result is due to Brion~\cite{Br96}.\footnote{In fact, the results of Brion and Payne hold integrally. Note that the results of Brion and Payne are about equivariant Chow cohomology groups of toric varieties; these are, by design, the Chow groups of the quotients stacks described.}  The result equips every logarithmic scheme or stack with tautological cohomology classes from combinatorics. 

\subsection{Logarithmic pullbacks} In Section~\ref{sec: logarithmic-pullbacks} we introduce a general notion of pullback for a logarithmically flat scheme $V$ mapping to $Y$, along logarithmic local complete intersection morphism $X\to Y$ between logarithmically flat schemes. We refer the reader to main text for complete formal statements. For the moment, we note two special cases in which there is a well-behaved pullback: 
\begin{enumerate}[(i)]
\item if $X\to Y$ is a logarithmic blowup along a regularly embedded center, the pullback is the class of the strict transform;  
\item if $X\to Y$ is the diagonal morphism of a logarithmically smooth scheme, the pullback gives rise to an intersection product that is calculated by passing to a blowup of $Y$ and intersecting strict transforms.  
\end{enumerate} 

The most important part of this framework is calculating the class of the strict transform of a cycle under a blowup. With a supply of cohomology classes from piecewise polynomials, we calculate strict transforms in logarithmic geometry. Let $Y$ be a proper and logarithmically smooth scheme over a point, let $Y'\to Y$ be a proper, birational, logarithmically smooth morphism. Let 
\[
V\to Y,
\]
be a morphism from a logarithmically flat scheme. Consider a blowup $\widetilde Y\to Y$ at a regularly embedded stratum $X$ in $Y$ with exceptional $\widetilde X$. Let $[V]^{\mathsf{log}}$ be the logarithmic pullback of $V$ and $[V]^{\mathsf{sch}}$ denote the ordinary pullback along the blowup. The situation is summarized by the following diagram:
\[
\begin{tikzcd}
V^{\mathsf{log}}\arrow{r} & V^{\mathsf{sch}}\arrow{d}\arrow{r} & V\arrow{d} \\
&\widetilde Y\arrow{r} & Y.
\end{tikzcd}
\]

\begin{customthm}{C}\label{thm: strict-transform}
The difference between the classes $[V]^{\mathsf{sch}}$ and $[V]^{\mathsf{log}}$ can be calculated using the standard operations of intersection theory and the following:
\begin{enumerate}[(i)]
\item the Chow homology classes associated to strata of $V$ decorated by piecewise polynomial functions, 
\item the piecewise polynomial on $\widetilde Y$ corresponding to the excess normal bundle of $\widetilde X$. 
\end{enumerate}
\end{customthm}

\subsection{Related problems}\label{sec: related-problems} Toric contact cycles illustrate a method to understand a simple underlying geometry: natural intersection products on open moduli problems only extend correctly to sufficiently fine logarithmic compactifications, and they are related to naive extensions by blowup formulas. We list a few instances. 
\begin{enumerate}[(i)]
\item Given logarithmically smooth $X$ and $Y$, one can consider virtual classes of spaces of maps to $X$ and $Y$, and to the product $X\times Y$. After push forward to a sufficient blowup of the moduli space of curves, the three classes are related by a product formula. They are related to the corresponding classes on $\Mbar_{g,n}$ by virtual strict transforms, see~\cite{R19b}. 
\item Given a smooth projective variety $X$ and transverse, smooth divisors $D$ and $E$, one considers virtual classes of logarithmic maps to $(X,D)$, $(X,E)$, and $(X,D+E)$. In genus $0$ and in the presence of positivity conditions, the classes are again related by strict transform operations~\cite{NabijouThesis,NR19}.
\item The degeneration formula relates the virtual class of the space of stable maps on the general fiber of a simple normal crossings degeneration to the classes of stable maps to components on the central fiber. The formulation is again in terms of virtual strict transforms of products of spaces of stable maps with appropriate diagonal classes~\cite[Theorem~B]{R19}. 
\end{enumerate}

\subsection{Past and parallel developments}  Independent of logarithmic geometry, two papers have served as inspiration. The direct limit of operational Chow rings of toric blowups of a toric variety was completely described by Fulton--Sturmfels~\cite{FS97}. Aluffi's work on modification systems also played a significant role in our formulation of this problem~\cite{Alu05}. Intersection theory within logarithmic geometry has attracted significant recent attention. Herr constructs a logarithmic Gysin morphism via the logarithmic normal cone, and building on this, Barrott describes a logarithmic bivariant formalism~\cite{Bar18,Herr}; see also~\cite{BvBvG}. Their work has influenced our study, but our goals are orthogonal to theirs. Our methods explain precisely how logarithmic intersection differs from standard intersection on a model, so that existing calculations can be bootstrapped to new ones. 

We are motivated by geometric phenomena that have recently come to light, particularly by work in Abel--Jacobi theory and Gromov--Witten theory~\cite{HPS19,R19}. The ideas here are closely related to and informed by the strategy followed in~\cite{NR19}. A related direction is work by Pandharipande, Schmitt, and the first author on the top Chern class of the Hodge bundle on blowups of $\Mbar_{g,n}$~\cite{MPS20}. Piecewise polynomials appear here, but the strict transform plays no role in this work, and the \textit{formula} for the top Chern class of the Hodge bundle becomes the object of study. 

\begin{remark}
The proposal of computing logarithmic intersections via the ring of piecewise polynomials and blowup formulas was presented by D.R. at the ETH Algebraic Geometry and Moduli seminar in 2020~\cite{R20}. In the intervening time, piecewise polynomials have appeared in nearby contexts~\cite{HS21,MPS20}. The ring appears to be a natural and interesting invariant of a logarithmic scheme. 
\end{remark}

\begin{remark}
During the preparation of this paper, we were informed of work of Holmes and Schwarz, who prove that the toric contact cycles are tautological based on considerations in logarithmic Abel--Jacobi theory~\cite{HS21}. Their approach is based on new invariance properties of the logarithmic double ramification cycle. Ours is a blunt tool, and does not uncover or rely on new geometry of the cycle, but we hope this may give it broad applicability, as noted above. 
\end{remark}

\begin{remark}[Recent progress]
A preprint of the present paper first appeared in July 2021. In the intervening years, additional progress has been made in this area. By using methods from the theory of compactified Jacobians, Holmes, Pandharipande, Pixton, Schmitt, and the first author have made additional progress on toric contact cycles~\cite{HMPPS}. Precisely, they give a formula for the virtual strict transform above, i.e. the ``logarithmic double ramification cycle'' in terms of certain basic classes, including the ``piecewise polynomial functions'' studied here. A different generalization of Theorem~\ref{thm: toric-contact-cycles}, involving Brill--Noether classes, and also using compactified Jacobian techniques, is established in~\cite{Mol22}. However, the methods in the present paper are applicable more broadly. For example, the blueprint here should also show that Gromov--Witten cycles of products of logarithmically smooth curves always lie in the tautological ring, bootstrapping~\cite{FP,Jan17}. In a different direction, logarithmic Gromov--Witten classes of toric varieties with their canonical logarithmic structure, including insertions, are shown to lie in the tautological ring in~\cite{RUK22}, and intersection numbers against the logarithmic double ramification cycle are studied in~\cite{CMR22}. 
\end{remark}

\subsection*{Acknowledgements} The idea originates from discussions with Davesh Maulik concerning the degeneration formula in Gromov--Witten theory. D.R. is especially grateful to Navid Nabijou for a collaboration in which this type of study was first implemented and lessons were learned~\cite{NR19} and to Tom Graber and Rahul Pandharipande for many useful conversations about localization and the toric contact cycles in the summer of 2019. Discussions with Paolo Aluffi, Younghan Bae, Luca Battistella, Renzo Cavalieri, David Holmes, Johannes Schmitt, and Jonathan Wise have been important and inspiring. We are grateful to Dan Abramovich and Sam Payne for countless conversations that have impacted our understanding of the tropical and logarithmic geometry. We thank the referee for their comments and corrections. 

The work here was presented at the ETH Algebraic Geometry and Moduli seminar, the working seminar on bChow intersections via subdivisions, and at the Stanford Algebraic Geometry Seminar. We thank the organizers and participants for their enthusiasm and for numerous discussions.

\subsection*{Funding} S.M. was supported by ERC-2017-AdG-786580-MACI.This project has received funding from the European Research Council (ERC) under the European Union Horizon 2020 research and innovation program (grant agreement No 786580). 

\subsection*{Conventions and terminology} We work over an algebraically closed field of characteristic zero. All schemes and stacks will be of finite type unless otherwise stated. Logarithmic structures will always be fine, and sometimes saturated; in the latter case this will be explicitly stated. A logarithmic scheme $X$ is \textit{tropically smooth} if the characteristic monoid at any point of $X$ is free. Terminology such as ``logarithmically flat logarithmic scheme'' is shortened to ``logarithmically flat scheme'', etc. A \textit{logarithmic modification} $\mathcal X'\to \mathcal X$ of logarithmically smooth stacks is a proper and birational logarithmically \'etale map, which include logarithmic blowups and generalized root constructions; a logarithmic modification of a general $X$ is the pullback of such under a strict morphism $X\to \mathcal X$. A logarithmically smooth morphism is \textit{weakly semistable} if it is flat with reduced fibers and \textit{strongly semistable} or \textit{semistable} if it is weakly semistable with smooth source and target.

\section{Intersections on toric varieties}\label{sec: toric-int-thy}

We motivate the core ideas -- logarithmic flatness, the ring of piecewise polynomials, the blowup formula, and Aluffi's Segre formula -- in the toric setting. The route here is not the most efficient analysis of the toric problem, but is parallel to the moduli calculation that appears later. 

Let $Y$ be a complete toric variety and $V\hookrightarrow Y$ be an irreducible subvariety that meets the dense torus of $Y$. Let $V^\circ$ be the intersection with the torus. Suppose $f: \widetilde Y\to Y$ is a blowup of $Y$ at a regularly embedded stratum $X$.  We describe the class of the closure of $f^{-1}(V^\circ)$, or equivalently, the strict transform of $V$. The strict transform will be denoted $V^\dagger$. The upshot will be that if the class of $V$ in $Y$ is known, the strict transform can be calculated if certain ``decorated strata classes'' of $V$ are known. The latter are pushforwards to $X$ of Chow operators applied to the fundamental classes of strata of $V$. 

\subsection{Setup} A reformulation will be convenient. Fix a toric proper birational morphism $Y'\to Y$, with dense torus $T$, and choose a subvariety $V$ on $Y'$ that is transverse in the following sense:
\begin{center}
\fbox{
\textrm{the multiplication map $V\times T\to Y'$ is flat.}
}
\end{center}
Equivalently, the map $V\to [Y'/T]$ is flat. In later sections, this will become the condition that $V$ is logarithmically flat. As a consequence, if $Y'$ is blown up, the strict and total transforms of $V$ under that blowup coincide. 

\begin{remark}[platification]
Given a subvariety of a torus, its closure in any sufficiently fine toric compactification is transverse in the above sense by a theorem of Tevelev~\cite{Tev07}. 
\end{remark}

The setup is summarized by the basic blowup diagram:
\[
\begin{tikzcd}
V^\dagger\arrow{r}\arrow{d}\drar[phantom, "\square"]&\widetilde V\arrow{d}[swap]{\nu}\arrow{r}{} \drar[phantom, "\square"]& V\arrow{d}{}& \\
\widetilde Y^{\dagger '}\arrow{r} &\widetilde Y'\arrow{d}[swap]{\nu}\arrow{r}{\varphi} \drar[phantom, "\square"]& Y'\arrow{d}{\pi}& \\
 &\widetilde Y\arrow{r}[swap]{f} &Y& \fbox{$\widetilde Y = \mathsf{Bl}_XY$.}
\end{tikzcd}
\]
The map $f$ is the blowup of the regularly embedded $X$. The space $\widetilde Y^{\dagger '}$ is the strict transform of $Y'$, obtained by pulling back ideal of $X$ and blowing it up. The pullback ideal is monomial, so the blowup map 
\[
\widetilde Y^{\dagger '}\to Y'
\]
is equivariant. It is given by a subdivision of the fan of $Y'$. The remainder of the diagram is defined by pullback. 

\begin{problem}
Calculate the difference, in the Chow group of $\widetilde Y^{\dagger '}$ between the pushforward of $V^\dagger$ and the pushforward of the class $f^![V]$ to $\widetilde Y^{\dagger '}$.
\end{problem}

Since $V$ is flat over $[Y'/T]$ this is equivalent to the one stated at the beginning of the section. 

\subsection{Fulton} We use Fulton's refined blowup formula~\cite[Example~6.7.1]{Ful84}\footnote{There is a minor typographical error in the statement of this formula in Fulton's text. The stated formula is correct, but the formula is an equality in the Chow group of $\widetilde Y'$ rather than $\widetilde X'$ as stated in the line after the displayed equation. Our notation is consistent with Fulton's.}, and first apply this to the middle row of the diagram above. We remind the reader that \textit{refined} in this context takes the meaning that here that the pullback along $\widetilde Y\to Y$ is defined for an arbitrary variety equipped with a map to $Y$, rather than a cycle on $Y$, and produces a class supported on the fiber product. 


Fulton's formula involves two ingredients -- a total Segre class and a total Chern class. The Segre class term is $s(X',Y')$, where $X'$ is the pullback of the center $X$ along the toric morphism $\pi$. The Chern class term is that of the excess normal bundle $\mathbb E$ on the exceptional divisor $\widetilde X\to X$. The excess normal bundle on a blowup is the quotient of the pullback of the normal bundle $N_{X/Y}$ by the normal bundle of the exceptional divisor $\widetilde X$. We introduce notation for the exceptionals:
\[
g: \widetilde X\to X, \ \ \ \ g': \widetilde X'\to X', \ \ \ \ j: \widetilde X\hookrightarrow \widetilde Y, \ \ \ \ j': \widetilde X'\hookrightarrow \widetilde Y'.
\]

\begin{theorem}[Fulton]
There is an equality of classes in the Chow group of $\widetilde Y'$ given by
\[
f^![Y'] - [\widetilde Y^{\dagger '}] = j'_\star\left\{c(\mathbb E)\cap g'^\star s(X',Y') \right\}_{\mathsf{exp}}
\]
where the right hand side takes the expected dimensional piece of the intersection product. 
\end{theorem}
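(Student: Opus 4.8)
The plan is to recognize the asserted identity as a direct instance of Fulton's refined blowup formula and to apply \cite[Example~6.7.1]{Ful84}, recording its ingredients in the present notation and confirming that the equality takes place in $\mathsf{CH}_\star(\widetilde Y')$. The entry point is that the blowup $f\colon \widetilde Y\to Y$ of the regularly embedded stratum $X$ is a local complete intersection morphism, so that the refined Gysin homomorphism $f^!$ is defined and $f^![Y']$ is a well-defined class on the fiber product $\widetilde Y' = \widetilde Y\times_Y Y'$. I would compute this class by deformation to the normal cone of $X'=\pi^{-1}(X)$ in $Y'$: the refined Gysin is governed by the cone $C_{X'}Y'$, and all of its exceptional content is encoded by the Segre class $s(X',Y')$.

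Next I would separate off the non-exceptional part. The fiber product $\widetilde Y' = \widetilde Y\times_Y Y'$ is the \emph{total} transform; it contains the strict transform $\widetilde Y^{\dagger '}=\mathsf{Bl}_{X'}Y'$ as its dominant component, and the two agree away from the exceptional divisor $\widetilde X'$. Thus the difference $f^![Y'] - [\widetilde Y^{\dagger '}]$ is supported on $\widetilde X'$ and lies in the image of $j'_\star$. To evaluate it I would appeal to the excess intersection formula: the normal bundle $N_{\widetilde X/\widetilde Y}$ is the tautological subbundle inside $g^\star N_{X/Y}$ on $\widetilde X=\mathbb{P}(N_{X/Y})$, so the excess normal bundle $\mathbb E = g^\star N_{X/Y}/N_{\widetilde X/\widetilde Y}$ is the universal quotient bundle. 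The correction is then obtained by capping the total Chern class $c(\mathbb E)$ against the pulled-back Segre class $g'^\star s(X',Y')$ on $\widetilde X'$ and pushing forward along $j'$, where $\{\ \cdot\ \}_{\mathsf{exp}}$ selects the component of dimension $\dim Y'$.

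The step requiring the most care is the bookkeeping of degrees and of the ambient Chow group, rather than any single hard estimate. First, I must check that the bundle appearing in the excess formula is exactly $\mathbb E$ as defined, so that the Chern factor has the correct rank $d-1$, where $d$ is the codimension of $X$ in $Y$, and the expected-dimensional truncation reproduces the codimension count; this is precisely the content of the tautological sequence on $\widetilde X=\mathbb{P}(N_{X/Y})$. Second, and this is the typographical point flagged in the footnote, I must confirm that the equality holds in $\mathsf{CH}_\star(\widetilde Y')$ and not in $\mathsf{CH}_\star(\widetilde X')$: the bracketed term is a class on $\widetilde X'$ only before $j'_\star$ is applied, and the comparison with $[\widetilde Y^{\dagger '}]$ is only meaningful after pushforward into the total transform. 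Finally, because $Y$ is merely toric and $\pi$ need not be well behaved, the embedding $X'\hookrightarrow Y'$ need not be regular; this is exactly why the Segre class $s(X',Y')$ of a subscheme, rather than an inverse Chern class of a normal bundle, is the correct object, and with these points in place the identity follows from \cite[Example~6.7.1]{Ful84}.
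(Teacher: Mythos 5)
Your proposal is correct and follows exactly the route the paper takes: the theorem is stated as a direct citation of Fulton's refined blowup formula \cite[Example~6.7.1]{Ful84}, and your identification of the ingredients (the refined Gysin class $f^![Y']$ on the total transform, the excess bundle $\mathbb E$ as the universal quotient on $\widetilde X=\mathbb P(N_{X/Y})$, the Segre class replacing an inverse Chern class since $X'\hookrightarrow Y'$ need not be regular, and the equality living in $\mathsf{CH}_\star(\widetilde Y')$ rather than $\mathsf{CH}_\star(\widetilde X')$) matches the paper's setup and its footnote verbatim. No gap.
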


The analogue for $V$ certainly holds, and we come to it momentarily. 

\subsection{Chow} We review intersection theory on complete toric varieties. Let $Y$ be a toric variety with fan $\Sigma$ and dense torus $T$. Given a cone $\sigma$ in $\Sigma$, there is a distinguished set of linear functions defined on $\sigma$, identified with the character lattice of the dense torus. Correspondingly, there is a well-defined notion of \textit{polynomial function} from $\sigma$ to $\RR$. 

\begin{definition}
A \textit{piecewise polynomial function on $\Sigma$} is a continuous function
\[
f: |\Sigma|\to \RR
\]
whose restriction to any cones of $\Sigma$ is polynomial. Let $\mathsf{PP}(\Sigma)$ be the ring of such functions. 
\end{definition}

Results of Brion for $\Sigma$ smooth, and Payne in general, gives the ring geometric meaning~\cite{Br96,Pay06}.

\begin{theorem}[Brion/Payne]
The equivariant Chow cohomology of a toric variety $Y$ is naturally isomorphic to the ring of piecewise polynomial functions on $\Sigma$. 
\end{theorem}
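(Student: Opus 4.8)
The plan is to exhibit a natural ring homomorphism $\mathsf{PP}(\Sigma)\to A^\star_T(Y)$, where $A^\star_T$ denotes equivariant operational Chow cohomology, and then to prove it is an isomorphism by localizing on the invariant affine cover and reducing the singular case to the smooth one by a toric resolution. Throughout I would use the identification $A^\star_T(\mathrm{pt})\cong\Symm(M)$, the ring of polynomial functions on $N_\RR$ with the character lattice $M$ placed in degree one. This is what makes both sides graded algebras over $\Symm(M)$ and forces any reasonable comparison map to be $\Symm(M)$-linear, which pins it down.

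First I would set up a descent framework. The invariant affine charts $U_\sigma$ form an open cover of $Y$ that is closed under intersection, with $U_\sigma\cap U_\tau=U_{\sigma\cap\tau}$, so the Mayer--Vietoris sequence for operational Chow cohomology (Fulton--MacPherson, Kimura) identifies $A^\star_T(Y)$ with the inverse limit $\varprojlim_{\sigma}A^\star_T(U_\sigma)$ taken over the face poset of $\Sigma$. On the combinatorial side, a piecewise polynomial is \emph{by definition} a compatible system of polynomials on the cones, so $\mathsf{PP}(\Sigma)=\varprojlim_\sigma \mathrm{Poly}(\sigma)$, where $\mathrm{Poly}(\sigma)$ is the ring of polynomial functions on $\sigma$. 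It therefore suffices to produce, naturally in $\sigma$ and compatibly with the face maps, isomorphisms $A^\star_T(U_\sigma)\cong \mathrm{Poly}(\sigma)$, and then pass to the limit.

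The local statement is transparent when $\sigma$ is smooth: then $U_\sigma\cong \A^d\times(\Gm)^{n-d}$, with the subtorus $T_\sigma$ attached to $\sigma$ acting linearly on $\A^d$ and the complementary torus acting freely. Removing the free factor gives $A^\star_T(U_\sigma)\cong A^\star_{T_\sigma}(\A^d)$, and the linear $T_\sigma$-action makes $\A^d$ equivariantly contractible onto the origin, so this is $A^\star_{T_\sigma}(\mathrm{pt})\cong\Symm(\overline M_\sigma)=\mathrm{Poly}(\sigma)$, where $\overline M_\sigma=M/(M\cap\sigma^\perp)$. For a general cone I would pick a smooth subdivision $\Sigma'\to\sigma$, giving a proper birational toric map $p:\widetilde U\to U_\sigma$ from a smooth toric variety, and invoke Kimura's descent exact sequence for a proper surjective (envelope) map,
\[
0\longrightarrow A^\star_T(U_\sigma)\longrightarrow A^\star_T(\widetilde U)\rightrightarrows A^\star_T(\widetilde U\times_{U_\sigma}\widetilde U).
\]
By the smooth case $A^\star_T(\widetilde U)=\mathsf{PP}(\Sigma')$, and the fibre product is again toric, so after a further resolution its cohomology is likewise computed by piecewise polynomials on its fan. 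Identifying $A^\star_T(U_\sigma)$ with the equalizer then reduces to the combinatorial assertion that a piecewise polynomial on $\Sigma'$ whose two pullbacks to the fibre-product fan agree is precisely one restricting to a single genuine polynomial on $\sigma$; that is, the equalizer is exactly $\mathrm{Poly}(\sigma)$.

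The main obstacle lives in this last step and is twofold. First, one must know that toric resolutions behave as Chow envelopes and that Kimura's sequence is exact equivariantly; this takes care, since operational cohomology does not obviously satisfy descent, and the argument leans on $p$ being birational, on the classes in question being pulled back from $U_\sigma$, and on the explicit toric structure of the fibre product. Second, one must prove the equalizer lemma: that the two projections from the fibre-product fan impose exactly the conditions forcing a polynomial to be well defined across all cones of $\Sigma'$ lying over a fixed cone of $\Sigma$. Granting these, the local isomorphisms are manifestly compatible with face restriction and with pullback along toric morphisms, so taking the inverse limit yields the natural isomorphism $\mathsf{PP}(\Sigma)\cong A^\star_T(Y)$. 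The smooth complete case serves as a sanity check: there localization embeds $A^\star_T(Y)$ into $\prod_\sigma\Symm(M)$ indexed by fixed points, and the GKM compatibility along invariant curves is literally the continuity-plus-polynomiality condition defining $\mathsf{PP}(\Sigma)$.
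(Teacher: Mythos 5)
The paper does not actually prove this statement; it quotes it from Brion and Payne, and the only related argument in the text is the proof of Theorem~\ref{theorem:Chowpp}, which generalizes it to Artin fans. Your overall architecture --- compute $A^\star_T(U_\sigma)$ directly for smooth cones, handle singular cones by a toric resolution together with Kimura's descent sequence for envelopes, and assemble over the face poset --- is essentially Payne's actual argument, and envelope descent along blowups is also the engine of the paper's proof of the Artin fan version. So the strategy is the right one.

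There is, however, a genuine gap at the very first step. You assert that a ``Mayer--Vietoris sequence for operational Chow cohomology'' identifies $A^\star_T(Y)$ with $\varprojlim_\sigma A^\star_T(U_\sigma)$ over the invariant affine cover. No such sequence is available: operational Chow cohomology has no excision or Mayer--Vietoris property for Zariski open covers, and Kimura's exact sequences apply to proper envelopes, not to open immersions. The limit identity you are invoking is essentially the content of the theorem, not an input to it. A sanity check that it cannot be formal: non-equivariantly the analogous statement is false, since for complete $Y$ the ring $A^\star(Y)$ is the (large) ring of Minkowski weights while every $A^\star(U_\sigma)$ vanishes in positive degree; so any proof of the equivariant limit statement must use the torus action in an essential way. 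The correct organization (Payne's, and the one the paper follows for Theorem~\ref{theorem:Chowpp}) is to run the envelope descent \emph{globally} along a toric resolution $Y'\to Y$, reduce to the smooth case there, and obtain the local-to-global sheaf property as an output of the induction; note the paper is careful about exactly this point, writing the comparison as a composite $\mathsf{CH}^\star_{\mathsf{op}}(\mathsf A_X)\to\varprojlim\mathsf{CH}^\star_{\mathsf{op}}(\mathsf A_x)\xrightarrow{\sim}\mathsf{PP}(\Sigma_X)$ and only establishing that the first arrow is an isomorphism at the end. Two smaller issues: the fibre product $\widetilde U\times_{U_\sigma}\widetilde U$ is neither irreducible nor toric, so ``resolve it and apply the smooth case'' must be replaced by an induction on dimension or by a toric envelope of each component; and the equalizer lemma you flag as the main obstacle is indeed where the combinatorial work lives and cannot be dismissed as a formality.
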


The theorem holds for arbitrary toric $Y$, but the non-equivariant Chow cohomology is more delicate. An answer is known in the complete case. Let $\Sigma^{(k)}$ be the cones in $\Sigma$ of codimension $k$. 

\begin{definition}
An integer valued function $c$ on $\Sigma^{(k)}$ is \textit{balanced} if it satisfies the relation
\[
\sum_{\sigma\in \Sigma^{(k)}: \sigma\supset \tau} \langle u,n_{\sigma,\tau} \rangle \cdot c(\sigma) = 0,
\]
is satisfied for all cones $\tau$ of dimension $k+1$, and the vector $n_{\sigma,\tau}$ is the generator of the lattice of $\sigma$ relative to that of $\tau$. A balanced function of this form is a \textit{Minkowski weight} of codimension $k$. 
\end{definition}

The Minkowski weights of a fixed codimension form a group. The direct sum of the groups admits a ring structure, described in~\cite{FS97}. Fulton and Sturmfels prove the following theorem. 

\begin{theorem}[Fulton--Sturmfels]
The operational Chow cohomology ring of $Y$ is naturally isomorphic to the ring $\mathsf{MW}(\Sigma)$ of Minkowski weights on $\Sigma$.
\end{theorem}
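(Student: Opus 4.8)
The plan is to construct an explicit comparison homomorphism degree by degree and then show it is an isomorphism of graded rings. Write $A^{k}(Y)$ for the codimension-$k$ part of the operational ring and, for a cone $\sigma\in\Sigma$ of dimension $\dim Y-k$, let $V(\sigma)\subseteq Y$ be the associated orbit closure, a complete subvariety of dimension $k$. Since $Y$ is complete there is a degree map $A_{0}(Y)\to\ZZ$, and I would define
\[
\Phi\colon A^{k}(Y)\longrightarrow\{\text{functions on }\Sigma^{(k)}\},\qquad \Phi(c)(\sigma)=\deg\big(c\cap[V(\sigma)]\big),
\]
which makes sense because $c\cap[V(\sigma)]\in A_{0}(Y)$ for dimension reasons. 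The assertion of the theorem is that $\Phi$ is injective with image exactly the subgroup $\mathsf{MW}^{(k)}(\Sigma)$ of codimension-$k$ Minkowski weights, and that it intertwines cup product with the Fulton--Sturmfels product on weights~\cite{FS97}.

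First I would verify that $\Phi(c)$ is balanced. The point is that the balancing relations are precisely the linear relations satisfied by the orbit-closure classes in $A_{\star}(Y)$: fixing a codimension-$(k+1)$ cone $\tau$ and a character $u\in M$, the class $\sum_{\sigma}\langle u,n_{\sigma,\tau}\rangle[V(\sigma)]$, summed over the codimension-$k$ cones $\sigma$ having $\tau$ as a facet, is the divisor of the monomial rational function $\chi^{u}$ on the complete toric variety $V(\tau)$ and therefore vanishes in $A_{\star}(V(\tau))$. Capping with $c$, which commutes with proper pushforward along $V(\tau)\to Y$ and respects rational equivalence, and then taking degrees, turns this vanishing into the balancing identity for $\Phi(c)$. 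Compatibility of $\Phi$ with products I would postpone and deduce from the fan displacement description of both sides once the additive isomorphism is available.

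For injectivity I would appeal to Kronecker duality for complete toric varieties: an operational class $c$ is detected by the functional $\alpha\mapsto\deg(c\cap\alpha)$ on $A_{k}(Y)$. Since $A_{k}(Y)$ is generated by the orbit-closure classes $[V(\sigma)]$ with $\dim\sigma=\dim Y-k$, the vanishing of every $\Phi(c)(\sigma)$ forces this functional to vanish, hence $c=0$. Establishing the duality statement itself --- that homology detects operational cohomology on complete toric varieties --- is an input I would either cite or prove using the generation of Chow groups by orbit closures together with envelope and resolution techniques for toric varieties.

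The main obstacle is surjectivity: from a Minkowski weight $c$ I must manufacture a genuine operational class, namely a family of cap products $A_{\star}(Y')\to A_{\star-k}(Y')$ for every morphism $Y'\to Y$, compatible with proper pushforward, flat pullback and Gysin maps. My plan is to define the operation first on toric modifications $Y'\to Y$, where $A_{\star}(Y')$ is again spanned by orbit closures, by means of the \emph{fan displacement rule}: to intersect the weight $c$ with an orbit class one translates the relevant fans by a generic vector and sums the contributions of the cones that then meet in the expected dimension, weighted by lattice indices. The crux --- and the step I expect to be hardest --- is the toric moving lemma, which guarantees that a generic torus translate of a subvariety meets a given cone-datum properly; this is what makes the displacement rule well defined, independent of the displacement vector, and equal to an honest intersection number. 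After checking the projection, pullback and commutativity axioms in the toric case, I would propagate the construction to an arbitrary $Y'\to Y$ via Chow's lemma and resolution of singularities, reducing any cap product to the toric situation, and finally confirm that $\Phi$ sends the resulting class back to $c$, completing the isomorphism.
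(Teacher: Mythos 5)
The paper offers no proof of this statement: it is quoted as background from Fulton--Sturmfels~\cite{FS97}, so there is nothing internal to compare your argument against. Your sketch is, in outline, the actual argument of~\cite{FS97}, and the additive half is correct as planned: the evaluation map $\Phi(c)(\sigma)=\deg(c\cap[V(\sigma)])$, the derivation of the balancing condition from the vanishing of $[\operatorname{div}(\chi^u)]$ on the complete orbit closure $V(\tau)$, and injectivity from Kronecker duality together with the generation of $A_k(Y)$ by orbit closures are exactly the steps in~\cite{FS97}.

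The one place your plan goes off the efficient path is surjectivity. The duality statement you invoke for injectivity is in fact an \emph{isomorphism} $A^k(Y)\xrightarrow{\ \sim\ }\Hom(A_k(Y),\ZZ)$ for complete toric varieties (proved in~\cite{FS97} via toric resolutions, envelopes, and Kimura's exact sequences), and once you also record the standard presentation of $A_k(Y)$ --- free on the classes $[V(\sigma)]$ for $\sigma\in\Sigma^{(k)}$, modulo the relations $[\operatorname{div}(\chi^u)]$ on the $V(\tau)$ for $\tau\in\Sigma^{(k+1)}$ --- surjectivity onto $\mathsf{MW}^{(k)}(\Sigma)$ is immediate: a balanced function is precisely a homomorphism out of this presentation. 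Your proposed alternative, manufacturing the bivariant class directly from the weight via the fan displacement rule and then verifying the projection, pullback, and commutativity axioms for arbitrary $Y'\to Y$, front-loads the hardest and least necessary work; the reduction to the toric case ``via Chow's lemma and resolution'' that you gesture at is essentially the same envelope argument that proves the duality isomorphism in the first place, so you would be reproving duality in a more awkward form. In~\cite{FS97} the displacement rule enters only afterwards, to compute the induced ring structure on Minkowski weights --- which is also the cleanest way to discharge the product compatibility you postponed. So: keep your additive architecture, but reroute surjectivity through the full duality statement rather than a hand-built bivariant class.
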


The two theorems are connected. Katz and Payne describe the ring map
\[
\mathsf{PP}(\Sigma)\to \mathsf{MW}(\Sigma)
\]
using localization in Chow cohomology in purely combinatorial terms~\cite{KP08}. 

\subsection{Chern} For concreteness, we assume that $Y$ is \textit{smooth}. Recall the blowup diagram
\[
\begin{tikzcd}
\widetilde X\arrow{r}\arrow{d} \drar[phantom, "\square"] & \widetilde Y\arrow{d}\\
X\arrow{r}&Y.
\end{tikzcd}
\]
Let $\sigma$ be the cone corresponding to to $X$ and let $\widetilde \sigma$ be the {additional} ray in the fan of $\widetilde Y$. Enumerate the rays of $\sigma$ and consider the piecewise linear function
\[
\ell_i:\Sigma\to \RR
\]
that has slope $1$ along the $i^{\mathrm{th}}$ ray of $\sigma$ and slope zero on all other rays. The function
\[
\widetilde \ell:=\min_i \{\ell_i\}
\]
is piecewise linear on the stellar subdivision $\widetilde \Sigma$ of $\Sigma$ along $\sigma$. 

\begin{lemma}
The total Chern class of the excess normal bundle of $\widetilde X$ is the image in the Chow cohomology of $\widetilde X$ of the following piecewise polynomial class on $\widetilde Y$:
\[
c(\mathbb E) = \frac{\prod_i (1+\ell_i)}{(1+\widetilde\ell)}. 
\]
Precisely, the denominator is expanded as a formal power series, and its image in the non-equivariant Chow cohomology of $\widetilde Y$, and the Gysin pullback to $\widetilde X$ is  equal to the required Chern class.
\end{lemma}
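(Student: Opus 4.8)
\emph{Strategy.} The plan is to read off $c(\mathbb E)$ from the defining sequence of the excess bundle and then translate each factor into a piecewise polynomial via Brion/Payne. Writing $g\colon \widetilde X\to X$ for the projection, the bundle $\mathbb E$ sits in the tautological sequence on $\widetilde X=\mathbb P(N_{X/Y})$,
\[
0\to N_{\widetilde X/\widetilde Y}\to g^\star N_{X/Y}\to \mathbb E\to 0,
\]
since by definition $\mathbb E$ is the quotient of $g^\star N_{X/Y}$ by the line bundle $N_{\widetilde X/\widetilde Y}$. By the Whitney formula,
\[
c(\mathbb E)=\frac{g^\star c(N_{X/Y})}{c(N_{\widetilde X/\widetilde Y})},
\]
the denominator being inverted as a formal power series. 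It therefore suffices to identify each of $g^\star c(N_{X/Y})$ and $c(N_{\widetilde X/\widetilde Y})$, as classes on $\widetilde X$, with the restrictions of the advertised piecewise polynomials on $\widetilde Y$.

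For the numerator I would invoke the standard decomposition of the normal bundle of a torus orbit closure: for the smooth cone $\sigma$ with rays $\rho_1,\dots,\rho_d$, the stratum $X=V(\sigma)$ satisfies
\[
N_{X/Y}\cong\bigoplus_{i=1}^d \mathcal O_Y(D_{\rho_i})\big|_X .
\]
Under the Brion/Payne isomorphism the equivariant first Chern class of $\mathcal O_Y(D_{\rho_i})$ is exactly the piecewise linear function with slope one along $\rho_i$ and slope zero along every other ray, namely $\ell_i$. Hence $c(N_{X/Y})=\prod_i(1+\ell_i)$, and since restricting the $\ell_i$ from $\Sigma$ to the star of $\widetilde\sigma$ in $\widetilde\Sigma$ realizes $g^\star$, the numerator is the restriction to $\widetilde X$ of $\prod_i(1+\ell_i)$ regarded as a class on $\widetilde Y$.

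For the denominator, the exceptional divisor is the toric boundary divisor $D_{\widetilde\sigma}$ of the new ray, so $N_{\widetilde X/\widetilde Y}=\mathcal O_{\widetilde Y}(D_{\widetilde\sigma})|_{\widetilde X}$ and its equivariant first Chern class is the Courant function of $\widetilde\sigma$. The heart of the matter is the combinatorial identity that this Courant function is $\widetilde\ell=\min_i\ell_i$. I would check it in local coordinates with $\sigma=\langle e_1,\dots,e_d\rangle$ and $v_{\widetilde\sigma}=e_1+\dots+e_d$, so that $\ell_i$ is the coordinate $x_i$ on $\sigma$: the stellar subdivision has maximal cones $C_i=\langle v_{\widetilde\sigma},\{e_j\}_{j\neq i}\rangle$, and on $C_i$ a short computation gives $\min_k\ell_k=\ell_i=$ the $v_{\widetilde\sigma}$-coordinate. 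Thus $\widetilde\ell$ is piecewise linear on $\widetilde\Sigma$, takes value $1$ on $v_{\widetilde\sigma}$ and $0$ on every old ray, and so equals the Courant function of $\widetilde\sigma$; the denominator is $1+\widetilde\ell$.

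Combining the two identifications through the Whitney formula produces $\frac{\prod_i(1+\ell_i)}{1+\widetilde\ell}$ as a class on $\widetilde Y$; its image under the forgetful map to non-equivariant Chow cohomology, restricted to $\widetilde X$ by the Gysin map $j^\star$, is $c(\mathbb E)$. The \emph{main obstacle} is bookkeeping: one must normalize the correspondence between first Chern classes and support functions so that numerator and denominator use the same convention (this is what makes the sign in $1+\widetilde\ell$, rather than $1-\widetilde\ell$, correct), and one must confirm that $\min_i\ell_i$ is genuinely linear on each cone of the stellar subdivision rather than merely continuous. Both are settled by the local-coordinate computation above.
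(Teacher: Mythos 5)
Your argument is correct and is exactly the computation the paper has in mind: the authors dispose of this lemma with ``Immediate from the definitions,'' and your Whitney-formula decomposition of $\mathbb E$, the identification $N_{X/Y}\cong\bigoplus_i\mathcal O_Y(D_{\rho_i})|_X$, and the local check that $\min_i\ell_i$ is the support function of the new ray are precisely the details being suppressed. No gaps.
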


\begin{proof}
The excess normal bundle is the quotient of the normal bundle of the center, pulled back to $\widetilde X$, by the natural relative tautological bundle on $\widetilde X$, when viewed as a projective bundle. Since the center of the blowup is the torus invariant subvariety dual to $\sigma$, and the ambient toric variety is smooth, the subvariety is a complete intersection. The normal bundle is therefore given the numerator (after interpreting each $\ell_i$ as the Chern class associated to the divisor of the $i^{th}$ ray. Similarly, by hyperplane bundle is the line bundle on $\widetilde Y$ of the exceptional $\widetilde X$, restricted to $\widetilde X$. By multiplicativity  of the total Chern class and functoriality of the identification with piecewise polynomials, the lemma follows. 
\end{proof}

\subsection{Segre}\label{sec: Segre} We have $Y'\to Y$ a proper birational map and $V\hookrightarrow Y'$ a subvariety that is flat over $[Y'/T]$.  Our goals require an understanding of the following two classes:
\[
s(X',Y') \ \ \textrm{and} \ \ s(X'\cap V, V). 
\]
The subscheme $X'$ is the vanishing locus of a monomial ideal sheaf on $Y'$ since $Y'\to Y$ is a monomial map. Similarly, the intersection $X'\cap V$ is monomial on $V$ with respect to the Cartier divisors on $V$ given by the intersections with $V$ of the toric divisors on $Y'$. 

Fix a pure scheme $S$ equipped with $n$ Cartier divisors $D_1,\ldots, D_n$ with regular crossings. They are said to have \textit{regular crossings} if the local defining equations of these divisors form regular sequences. A monomial subscheme $Z\subset S$ is determined by a finite collection of lattice points $q_1,\ldots, q_r$ in $\mathbb Z_{\geq 0}^n$. Each $q_i$ determines a reducible hypersurface supported on the $D_i$, and $Z$ is their intersection. A connected intersection of divisors contained in $Z$ will is called a \textit{stratum} in $Z$. 

A combinatorial manifestation of the normal cone is the \textit{Newton region} of $Z$. It is the complement in $\mathbb R^n_{\geq 0}$ of the convex hull of the union of the positive orthants centered at $q_1,\ldots, q_r$. The Segre classes are computed by a beautiful formula of Aluffi~\cite{Alu13}. We require the following consequence.

\begin{theorem}[Aluffi]
There exists a universal formula for the Segre class of a subscheme $Z\subset S$ that is monomial with respect to $n$ Cartier divisors with regular crossings, depending only on the Newton region of the subscheme. If $L_1,\ldots, L_n$ denote the line bundles associated to these Cartier divisors, the Segre class is equal to a sum of terms, with each term equal to a polynomial in the Chern classes of $L_1,\ldots, L_n$ applied to strata in $Z$. 
\end{theorem}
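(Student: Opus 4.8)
The plan is to compute $s(Z,S)$ by principalizing the monomial ideal of $Z$ through a toric modification governed by the Newton region, and then pushing the resulting elementary Segre class back down to $S$. The first step is to reduce to a universal situation. Since $D_1,\dots,D_n$ have regular crossings, their local equations $x_1,\dots,x_n$ form a regular sequence, so \'etale locally the pair $(S,\{D_i\})$ is the pullback of $(\A^n,\{x_i=0\})$ along a smooth morphism, and $Z$ is the pullback of the monomial subscheme $Z_0\subset\A^n$ cut out by the \emph{same} lattice points $q_1,\dots,q_r$. This exhibits every intersection-theoretic input as pulled back from the $\Gm^n$-equivariant geometry of $\A^n$, which is the eventual source of universality.

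The second step is to resolve the ideal torically. The Newton region determines a rational polyhedral subdivision $\widetilde\Sigma$ of $\RR_{\geq 0}^n$ realizing a toric principalization of the monomial ideal; on $S$ this is an iterated blowup $\nu\colon\widetilde S\to S$ along the regularly embedded strata $D_{i_1}\cap\cdots\cap D_{i_k}$, after which the pullback ideal of $Z$ becomes invertible. Each ray $\rho$ of $\widetilde\Sigma$, with primitive generator $v_\rho$, appears with weight $m_\rho=\min_i\langle v_\rho,q_i\rangle$, the value on $v_\rho$ of the support function of the Newton region; thus the Cartier divisor $\widetilde Z=\sum_\rho m_\rho E_\rho$ and the data $(\widetilde\Sigma,\{m_\rho\})$ depend only on the Newton region.

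The third step applies the elementary Segre formula for a Cartier divisor together with birational invariance of Segre classes~\cite{Ful84} (the modification being an isomorphism away from the support of $Z$): one has $s(Z,S)=\nu_\star\big([\widetilde Z]\cap c(\mathcal O(\widetilde Z))^{-1}\big)$, and expanding $c(\mathcal O(\widetilde Z))^{-1}$ as a power series in $[\widetilde Z]=\sum_\rho m_\rho E_\rho$ reduces everything to pushing forward monomials in the $E_\rho$. By the projection formula each such monomial pushes down to the class of a stratum of $Z$ --- the image of the corresponding toric orbit closure --- with a coefficient read off from the self- and mutual intersections of the $E_\rho$, i.e. from the star structure of $\widetilde\Sigma$. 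The rays inherited from the coordinate directions contribute the $D_i$, hence $c_1(L_i)$, while the genuinely exceptional rays push down with their multiplicities; collecting terms presents $s(Z,S)$ as a sum over strata of $Z$, each decorated by a polynomial in $c_1(L_1),\dots,c_1(L_n)$. As every coefficient is a function of $(\widetilde\Sigma,\{m_\rho\})$ alone, the formula is universal in the Newton region, which is the assertion.

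The main obstacle is the third step: controlling the pushforward of powers of the exceptional divisors and proving the resulting coefficients are genuinely universal rather than sensitive to the ambient $S$. The cleanest remedy is to run the whole computation $\Gm^n$-equivariantly on $\A^n$ --- equivalently on the Artin fan $[\A^n/\Gm^n]$ --- where the Segre class of $Z_0$ and all the intersection numbers are manifestly determined by the fan, and then to transport the answer to $S$ via the smooth pullback of the first step. A secondary subtlety is the bookkeeping of strata when the Newton region has several facets (several generators $q_i$): different facets can spawn different exceptional rays, but $\widetilde\Sigma$ records these faithfully, so no stratum is over- or under-counted.
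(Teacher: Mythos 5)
First, a point of orientation: the paper does not prove this statement. It is quoted as a theorem of Aluffi and used as a black box, with the proof living entirely in the cited reference~\cite{Alu13}. So the only meaningful comparison is with Aluffi's own argument, and your sketch does follow its broad architecture: principalize the monomial ideal by a toric modification, invoke birational invariance of Segre classes together with the elementary formula $s(D,X)=c(\mathcal O(D))^{-1}\cap[D]$ for a Cartier divisor, and push forward. That skeleton is correct.

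As written, however, there are two genuine gaps. First, the reduction step is misstated. Regular crossings gives that the classifying map $S\to[\A^n/\Gm^n]$ (locally, $S\to\A^n$) is \emph{flat}, not smooth --- if $S$ is singular no smooth chart to $\A^n$ exists --- and Segre classes do not localize in the \'etale topology, so one cannot ``compute locally and glue.'' The correct mechanism is the single global identity $s(Z,S)=\alpha^\star s(Z_0,[\A^n/\Gm^n])$ for the flat map $\alpha$, which is exactly how the paper itself transports the formula (see its lemma identifying regular crossings with logarithmic flatness of $X\to[\A^s/\mathbb G_m^s]$ and the remark that flat pullback preserves Segre classes). Your closing ``remedy'' paragraph essentially concedes this, but the first step should be built around it from the outset. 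Second, and more seriously, the entire content of Aluffi's theorem is the computation you compress into ``a coefficient read off from the self- and mutual intersections of the $E_\rho$.'' For an iterated blowup, the pushforwards of monomials in the exceptional divisors are governed by the Chern classes of the normal bundles of the successive centers; showing that after all cancellations the answer (i) is supported only on strata contained in $Z$, (ii) is a polynomial in $c_1(L_1),\dots,c_1(L_n)$ applied to such strata, and (iii) depends only on the Newton region and not on the choice of principalizing subdivision $\widetilde\Sigma$, is precisely where the work lies --- Aluffi's closed form is an integral over the Newton region obtained from this analysis. Point (iii) in particular is asserted but not argued: different refinements give different data $(\widetilde\Sigma,\{m_\rho\})$, so ``a function of $(\widetilde\Sigma,\{m_\rho\})$ alone'' does not yet produce a formula depending only on the Newton region. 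Nothing in your outline is false, but the decisive steps are announced rather than carried out, so this is a plan for a proof rather than a proof.
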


A word on the meaning of ``universal'' here. For a monomial subscheme on a regular crossings pair, the Segre class is calculated by the \textit{same} formal expression -- a sum of strata that are contained in the subscheme, decorated by Chern class operators, depending only on the Newton region. 

\subsection{Synthesis} Note that we still have the running assumption that $Y$ is smooth. We can put the pieces together now. The difference between the strict and total transforms of $V$ can be computed from the pushforwards of strata of $V$ decorated by polynomials in the Chern operators. 

Fulton's formula asserts that the difference between the strict transform and the Gysin pullback along a blowup is a class that is supported on the exceptional locus. This part of the diagram is reproduced below, with $X'$ denoting the scheme theoretic preimage of the blowup center in $Y'$:
\[
\begin{tikzcd}
\widetilde X'\arrow{r}\arrow{d}\drar[phantom, "\square"]  & X'\arrow{d}\\
\widetilde X\arrow{r} & X.
\end{tikzcd}
\]
The horizontal maps are projective bundles. The Chern class of the excess bundle $\mathbb E$ is supported on $\widetilde X$. The term $s(X',Y')$ is handled by Aluffi's formula. Recall that $V\hookrightarrow Y'$ is the cycle of interest.

\begin{lemma}
The Segre class $s(V\cap X',V)$, after pushforward to the Chow group of $X'$ is computed as
\[
s(V\cap X',V) =  s(X',Y')(V),
\]
where the right side is interpreted as follows. The Segre class $s(X',Y')$ according to Aluffi's universal formula is a sum of terms -- each term is a stratum of $X'$ times a polynomial in Chern classes of line bundles on $Y'$. Define $s(X',Y')(V)$ as the same formal sum, where the strata terms are replaced by their intersection with $V$, and the Chern classes are pulled back to these strata.  
\end{lemma}

\begin{proof}
By our transversality hypothesis for $V\hookrightarrow Y'$, the pullback of the divisors on $Y'$ exhibit $V$ as a regular crossings pair. The Segre class $s(V\cap X',V)$ can be computed by Aluffi's formula, so the lemma follows by combining the tranversality of $V$ and $Y'$ with naturality of Chern classes. 
\end{proof}

The inclusion $V\cap X'\to X'$ may not admit an obvious Gysin pullback. A more sophisticated view is that since the Segre formula is universal, it holds for monomial substacks of $[Y/T]$; since $Y$ and $V$ are flat this stack, the pullback preserves the Segre classes. We soon adopt this view, but when the definitions are unwound, this does not say anything more than the stated procedure 

We examine the blowup over the center:
\[
\begin{tikzcd}
\widetilde{V\cap X'}\arrow{r} \arrow{d}\drar[phantom, "\square"] \arrow[bend right=60,swap]{dd}{q} & V\cap X'\arrow{d} \arrow[bend left=60]{dd}{p}\\
\widetilde X'\arrow{r}\arrow{d}\drar[phantom, "\square"]  & X'\arrow{d}\\
\widetilde X\arrow{r}{h} & X.
\end{tikzcd}
\]
The composite vertical maps are proper. The vector bundle $\mathbb E$ is pulled back along $q$. By the projection formula for $c(\mathbb E)$ and compatibility of pullback and pushforward applied to the Segre class, we obtain the following equality of class in the Chow group of $\widetilde X$:
\[
q_\star\left(c(\mathbb E)\cap g'^\star s(X'\cap V,V)\right) = c(\mathbb E)\cap h^\star p_\star s(V\cap X',V).
\]
We explain the utility of the formula. The spaces in question come equipped with a collection of Cartier divisors, i.e. the boundary divisors. A \textit{standard expression} on $V\cap X'$ is a polynomial in Chern classes of the boundary divisors of $V$ applied to the class of a stratum of $V$. 

\begin{corollary}
The difference of classes
\[
f^\star[V] - [V^{\dagger}] 
\]
in the Chow group of $\widetilde Y$, is the pushforward of an element in the Chow group of $\widetilde X$. Moreover, the element can be calculated by evaluating a standard expression on $V\cap X'$, pushing forward along $p$, pulling back along $h$, and applying the total Chern class of the excess normal bundle. 
\end{corollary}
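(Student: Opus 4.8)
The plan is to assemble the three preceding results — the $V$-analogue of Fulton's refined blowup formula, the Segre lemma, and the projection-formula identity over the center — into a single computation, so that the corollary follows formally without introducing new geometry. The point of the argument is purely organizational: each of the inputs has already done a piece of the work, and the task is to see that they compose to give a class of the advertised shape.

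First I would invoke Fulton's formula for $V$. Here the transversality hypothesis is essential: because $V\to [Y'/T]$ is flat, the strict transform $V^\dagger$ agrees with the total transform under the blowup of $X'$, so no contribution supported over the center is silently discarded and $V^\dagger$ really is the strict transform appearing in the corollary. Since $Y$ is assumed smooth, $f^\star=f^!$, and Fulton's formula then gives, after pushforward to $\widetilde Y$,
\[
f^\star[V] - [V^\dagger] = j'_\star\left\{ c(\mathbb E)\cap g'^\star s(V\cap X', V)\right\}_{\mathsf{exp}},
\]
which already realizes the difference as a pushforward supported on the exceptional locus. The remaining work is to recognize the bracketed class as a standard expression transported around the blowup diagram over the center.

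Next I would feed in the Segre lemma, which identifies $s(V\cap X',V)$ with the class computed by Aluffi's universal formula: a sum of strata of $V$ (intersected with $X'$) decorated by polynomials in the Chern classes of the boundary line bundles on $V$. By definition these are exactly the standard expressions on $V\cap X'$. Simultaneously, the Chern lemma writes $c(\mathbb E)$ as the image of the explicit piecewise polynomial $\prod_i(1+\ell_i)/(1+\widetilde\ell)$, so that $c(\mathbb E)$ contributes only universal, combinatorially determined Chern-class operators on $\widetilde Y$ restricted to $\widetilde X$. I would then transport the class to $\widetilde X$ by means of the commutative diagram over the center together with its displayed projection-formula identity
\[
q_\star\left(c(\mathbb E)\cap g'^\star s(X'\cap V,V)\right) = c(\mathbb E)\cap h^\star p_\star s(V\cap X',V),
\]
where $q$ is the proper composite vertical map. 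This lets me push the standard expression forward along $p$, pull it back along $h$, and cap with the total Chern class of the excess normal bundle; pushing forward along $j\colon \widetilde X\hookrightarrow \widetilde Y$ yields precisely the asserted description.

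The main obstacle will be bookkeeping rather than a new idea. I must verify that the "expected dimensional piece" truncation in Fulton's formula is compatible with the term-by-term replacement dictated by Aluffi's universal formula, and that the Gysin pullbacks and pushforwards in the center diagram commute as the projection-formula identity claims — in particular that the inclusion $V\cap X'\to X'$, which need not admit an obvious Gysin pullback, is handled correctly. As noted after the Segre lemma, the clean way to secure this is the stack-theoretic reformulation: Aluffi's formula holds for monomial substacks of $[Y/T]$, and flatness of $V$ over that stack ensures the pullback preserves Segre classes, so the displayed manipulations are legitimate. Once these compatibilities are in place, capping with $c(\mathbb E)$ and pushing forward along $j$ gives the corollary.
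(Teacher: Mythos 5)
Your proposal is correct and follows essentially the same route as the paper: the corollary is obtained there exactly as a synthesis of Fulton's refined blowup formula applied to $V$, Aluffi's universal Segre formula via the transversality of $V$ over $[Y'/T]$, and the displayed projection-formula identity pushing the class down to $\widetilde X$ along $q$, $p$, and $h$. Your additional remarks on the strict/total transform coincidence and on handling the pullback to $V\cap X'$ via the stack $[Y/T]$ match the paper's own commentary.
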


The expression above is ``universal'' in a similar sense of Aluffi's formula itself. In this toric setting, the content is that the difference between strict and total transforms can be understood completely in terms of a universal formula, whose input is the pushforward to $X$ of classes of strata of $V$ decorated by Chern classes, also known as \textit{normally decorated strata classes}~\cite{MPS20}.

\subsection{Calculus} The calculation scheme we have laid out is essentially elementary. The rest of this paper is devoted to implementing in the context of logarithmic schemes, moduli spaces, and virtual classes. We close out the toric discussion by explaining how using elementary toric intersection theory, the ingredients in the universal expression above can be pleasantly written in terms of Minkowski weights and piecewise polynomials. We stress again that $Y$ is smooth. 

\subsubsection{The cycle and its strata} The cycle $V$ in $Y'$ intersects the boundary strata of $Y'$ properly and defines a cohomology class $[V]$ in $\mathsf{CH}^\star(Y')$. It determines a Minkowski weight on $\Sigma_{Y'}$. 

Similarly, the strata of $V$ lie in the boundary strata of $Y'$. If $W\subset V$ is a closed stratum, it is contained in a closed stratum $Z'\subset Y'$. The stratum $Z'$ is a toric variety and has an associated fan. The subvariety $W$ defines a Chow cohomology class in $\mathsf{CH}^\star(Z')$. These Chow cohomology classes are expressed as Minkowski weights on the fan of $Z'$. 

\subsubsection{Normal decoration} Let $W\subset V$ be a stratum. The intersection procedure requires us to decorate this stratum with polynomials in Chern roots of the normal bundle of $W$ before pushforward. 

Assume $W\subset Z'$. The normal bundle splits, and in practice Chern roots can be given in three ways. The first is as a combination of divisorial strata of $Z'$. In this case, each of these strata is itself a toric variety and the Minkowski weight associated to $W$ described above gives a Minkowski weight on this smaller stratum. Repeating this process for all the divisors, we obtain a Minkowski weight corresponding to a decorated stratum of $V$. 

The second way that the normal roots are given is by a piecewise linear functions. In this case, we convert the piecewise linear function $\eta$ into a integer combination of boundary divisors. The coefficient of a divisor corresponding to a ray $\rho$ is the slope of $\eta$ along this ray. We then repeat the procedure above. 

The final way in which the normal roots are provided is as a product of codimension $1$ Minkowski weights. In this case, the decorated stratum of $V$ is obtained by the fan displacement rule, intersecting the cycle defined by the stratum of $V$ with these codimension $1$ weights~\cite[Section~4]{FS97}. 

\subsubsection{Pushforward} Given an equivariant proper toric morphism $Z'\to Z$ and a Minkowski weight on $Z'$, its pushforward to $Z$ is calculated via the projection formula. The Minkowski weight records the degree of the operator applied to all boundary strata. In order to compute the pushforward, it is therefore sufficient to calculate, with multiplicity, an expression in boundary strata for the pullback to $Z'$ of all boundary strata of $Z$. Once this is done, the Minkowski weight may be evaluated on these expressions, and this determines the pushforward. 

\subsubsection{Pullback} The morphism $\widetilde X\to X$ is a projective bundle and this is visible at the fan level. An explicit description of the fan of a projective bundle may be found in~\cite[Chapter VII]{CLS11} . The pullback itself is computed by Fulton and Sturmfels' formula~\cite[Proposition~3.7]{FS97}. 

\subsection{A simple example} Let us do the first nontrivial example of a strict transform calculation to illustrate the nature of the calculation.  We hope this will allow an interested reader to work through the different pieces of the formula. 

Let $Y = \mathbb P^3$ and $X$ be one of its four torus invariant points. Let $V\hookrightarrow Y$ be a line passing through $X$. We maintain the notation from the discussion above. Let us calculate the class of $V^\dagger$ in the scheme above. 

According to the scheme, in order to study the class $f^\star[V]-[V^\dagger]$, we need to calculate the excess bundle $\mathbb E$. The bundle $\mathbb E$ is a quotient of the pullback of a bundle on $X$, to $\widetilde X$. Therefore the first piece is just the normal bundle of $X$. Of course, in this case $X$ is a point, so the classes $\ell_i$ are all $0$. However, this is a low dimensional accident. 

The point $X$ is an intersection of three torus invariant hyperplanes. Each is associated to a ray, and let the associated piecewise linear functions be $\ell_1,\ell_2$ and $\ell_3$.  Since $X$ is a complete intersection, the total Chern class of the normal bundle is given by 
\[
\prod_{i=1}^3(1+\ell_i),
\]
turned into a Chow cohomology class on $Y$ and then restricted to $X$. The second piece of the excess normal bundle is the class of $\widetilde X$ on $\widetilde Y$ \textit{restricted} to $\widetilde X$. Let $\widetilde ell$ denote the piecewise linear function associated to $\widetilde X$ on $\widetilde Y$. We can expand it as a power series $1-\ell+\ell^2-\cdots$ and then restrict to $\widetilde X$. We are interested in the restriction of the linear term in this expansion, by dimension considerations. 

The final term is the Segre class of $X\cap V$ in $V$. Typically this is where we would plug in Aluffi's formula, but in this case the term is just $1$. In general, these Segre calculations are essentially independent from the Chern class calculations, so we refer the reader to the discussion in~\cite[Examples~1.1--1.4]{AluConj}. 

Putting the pieces together and extracting the degree $2$ term, we see that the difference we were trying to calculate is $-\ell$, restricted to $\widetilde X$, and then pushed forward to $\widetilde Y$. We find that there difference is exactly the class of a line in the exceptional divisor, as expected from elementary calculations.

\section{Intersections on logarithmic schemes}\label{sec: log-intersections}

In the remainder of the paper, we work with Chow groups and their operational Chow rings with rational coefficients, and assume that all logarithmic stacks have locally connected strata, to guarantee the existence of Artin fans. We write $X \times_Y^{\mathsf{log}} Z$ for the fiber product in the category of fine and saturated logarithmic schemes. 

We adapt the toric techniques to the context of logarithmic schemes. Our motivating example, the double ramification cycle, requires an additional ingredient -- the virtual class -- and we examine it in the final section.

\subsection{The plan} We examine the part of intersection theory that behaves well with respect to logarithmic blowups. The first order of business in this section is to examine intersections on logarithmically smooth schemes, and use this to model the general formalism. The relevance of Fulton's blowup formula will be apparent. 

In order to build operations in intersection theory based on logarithmic fiber products, Fulton's formula demands the following: a notion of logarithmic cycle whose logarithmic pullback is calculated by strict transforms, a supply of Chow cohomology operators on a logarithmic scheme, an understanding of the normal cones of monomial ideals, and a class of morphisms along which we can pullback. The cycles are provided by maps between logarithmically flat schemes, the cohomology operators by the ring of piecewise polynomials on the tropicalization, the normal cones by Aluffi's formula for Segre classes of monomial subschemes, and the morphisms by logarithmic local complete intersections. 

\subsection{Basic model: logarithmically nonsingular schemes} Let $Y$ be a logarithmically smooth scheme. A \textit{logarithmic cycle} on $Y$ is a pair $(V,\alpha)$ where $V$ is a logarithmically flat scheme and 
\[
\alpha: V\to Y
\]
is a proper monomorphism of logarithmic schemes. Equivalently, there exists a logarithmic modification $Y'\to Y$ such that the fine logarithmic pullback $V'\to Y'$ is a \textit{strict} closed embedding. The notion was considered in an early version of Barrott's paper~\cite{Bar18}. When the morphism $\alpha$ is clear from the context, we drop it from the notation. 

\begin{definition}
Let $(V,\alpha)$ and $(W,\beta)$ be logarithmic cycles on $Y$; their \textit{logarithmic intersection} is defined as follows. Let $\pi: Y'\to Y$ be a logarithmic modification of $Y$ with $Y'$ smooth such that, denoting the fine and saturated pullbacks of $V$ and $W$ by primes, the morphisms
\[
\alpha': V'\to Y' \ \ \ \ \beta':W'\to Y'
\]
are strict. Then define
\[
\fbox{$V\cdot_{\mathsf{log}}W: = \pi_\star(V'\cdot W')$}
\]
where the product on the right hand side is the standard intersection product on the smooth $Y'$. 
\end{definition}


\begin{proposition}
Let $(V,\alpha)$ and $(W,\beta)$ be logarithmic cycles on $Y$. The class of the {logarithmic intersection} $V\cdot_{\mathsf{log}}W$ is independent of all choices. 
\end{proposition}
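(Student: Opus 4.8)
\emph{Proof proposal.} The plan is to exploit that the admissible models form a cofiltered system and reduce independence to a single comparison between two models related by one further modification.

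First I would show that any two admissible choices are dominated by a common admissible choice. Given smooth modifications $\pi_1\colon X_1'\to X$ and $\pi_2\colon X_2'\to X$ over which the fine and saturated pullbacks of $V$ and $W$ are both strict, the logarithmic modifications of $X$ form a cofiltered system, so there is a logarithmic modification $X_0\to X$ dominating both $X_1'$ and $X_2'$. Applying functorial resolution of singularities (valid in characteristic zero, and realizable as a logarithmic modification) produces a smooth $X''\to X_0$, hence a smooth logarithmic modification $\pi''\colon X''\to X$ factoring as $\pi''=\pi_i'\circ\rho_i$ through both $X_i'$. I would then check that $X''$ is again admissible: strict closed immersions are stable under logarithmic base change, so the induced $V'',W''\hookrightarrow X''$ remain strict closed immersions, and they remain logarithmically flat, being logarithmic modifications of $V'$ and $W'$. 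By functoriality of pushforward, it therefore suffices to prove that for a single map $\rho\colon X''\to X'$ of admissible smooth models one has $\rho_\star(V''\cdot W'')=V'\cdot W'$ in $\mathsf{CH}_\star(X')$; the two a priori distinct classes $\pi_{1\star}(V_1'\cdot W_1')$ and $\pi_{2\star}(V_2'\cdot W_2')$ then both equal $\pi''_\star(V''\cdot W'')$.

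The comparison $\rho_\star(V''\cdot W'')=V'\cdot W'$ I would prove using Poincar\'e duality and the projection formula. On the smooth $X'$, write $v'\in\mathsf{CH}^\star(X')$ for the operational class dual to $[V']$, so that $V'\cdot W'=v'\cap[W']$, and similarly $v''\cap[W'']$ on $X''$. There are two inputs. First, since $W'$ is logarithmically flat, the induced map $W''\to W'$ is a proper birational logarithmic modification, whence $\rho_\star[W'']=[W']$. Second, since $V'$ is logarithmically flat, its strict transform under $\rho$ coincides with its total transform, and the total transform is computed by $\rho^\star v'\cap[X'']$; hence $v''=\rho^\star v'$. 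Combining these with the projection formula gives
\[
\rho_\star(V''\cdot W'')=\rho_\star(\rho^\star v'\cap[W''])=v'\cap\rho_\star[W'']=v'\cap[W']=V'\cdot W',
\]
as required.

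The main obstacle is the identity $v''=\rho^\star v'$, that is, the assertion that logarithmic flatness forces the strict transform to agree with the cohomological (total) pullback, with no exceptional correction and multiplicity one. This is exactly the point where the transversality built into the definition of a logarithmic cycle is used: it is the global incarnation of the toric fact recorded earlier, that for a flat $V$ the strict and total transforms under a blowup coincide. I would verify it by reducing to the universal situation over the Artin fan and invoking that statement, checking that strictness makes the excess normal bundle contribution in Fulton's formula vanish. A secondary technical point to be handled with care is that the common refinement $X''$ can be taken smooth while remaining a logarithmic modification and preserving admissibility, which relies on functorial toroidal resolution in characteristic zero.
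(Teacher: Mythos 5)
Your proposal is correct, and it isolates exactly the right crux, but it packages the argument differently from the paper in two respects. First, where you dominate two admissible models by a common smooth admissible refinement (cofiltered system plus toroidal resolution, with admissibility preserved because strict closed immersions are stable under logarithmic base change), the paper instead invokes toroidal weak factorization to reduce at once to comparing a fixed admissible $X'$ with a single further blowup $X''\to X'$ along a smooth center. Your route only ever moves up the tower, which sidesteps the (unaddressed, if minor) issue that intermediate models in a weak-factorization zigzag need not be admissible; the paper's route avoids having to treat a composite $\rho$ directly. Second, where you compute on $X'$ itself via Poincar\'e duality, writing $V'\cdot W'=v'\cap[W']$ and reducing everything to the identity $v''=\rho^\star v'$, the paper reduces to the diagonal: it uses the lci morphism $X''\times X''\to X'\times X'$, notes that the diagonal class pushes forward to the diagonal, and compares $\Delta''^!(V''\times W'')$ with $\Delta'^!(V'\times W')$. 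These are two dressings of the same computation, and both hinge on the identical key lemma --- that for a strict, logarithmically flat cycle on an admissible model the strict and total transforms under a further modification have the same Chow class (the paper applies this to $V'\times W'$ in $X'\times X'$; you apply it to $V'$ in $X'$). You correctly flag this as the main obstacle and your proposed verification (transversality to the strata kills the exceptional correction in Fulton's formula) is the same mechanism the paper relies on, though strictly speaking what dies is the expected-dimension part of the Segre-class correction term rather than the excess bundle itself. Your version is arguably the more elementary write-up; the paper's diagonal formulation is the one that generalizes to the logarithmic lci pullbacks developed later in the text.
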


\begin{proof}
By toroidal weak factorization~\cite{AKMW}, it suffices to take a blowup $Y'\to Y$ such that 
\[
\alpha': V'\to Y' \ \ \ \ \beta':W'\to Y'
\]
are flat and compare it to a further logarithmic blowup $Y''\to Y$ along a smooth center. We do this by reduction to the diagonal. The blowup gives rise to a local complete intersection morphism
\[
Y''\times Y''\to Y'\times Y'
\]
and the diagonal class on the left pushes forward to the diagonal on the right. The intersection product can be defined in two ways. First, we intersect $V''\times W''$ with the diagonal in $Y''\times Y''$ and push it forward. Second, we intersect $V'\times W'$ with the diagonal in $Y'\times Y'$. The cycle $V'\times W'$ in $Y'\times Y'$ is strict and therefore meets the strata of $X'\times X'$ in the expected dimension. Therefore the fine and saturated logarithmic pullback $V''\times W''$ has the same cycle class as the total transform. Therefore the two definitions above agree by the projection formula. 
\end{proof}


\begin{remark}[Logarithmic flatness]
Logarithmic flatness over the ground appears frequently in our setup. There are two practical reasons. First, if $Y$ is logarithmically flat, the locus where the logarithmic structure is nontrivial has positive codimension. Second, if $Y$ is logarithmically flat, a logarithmic blowup can be understood as a birational modification of $Y$. It may be possible to generalize this, but we have made no attempt to do so. In the final section of the paper, we work with a cycle, namely the double ramification cycle, that is not logarithmically flat, but we factorize out the failure of logarithmic flatness into a strict map that is handled separately, and reduce to the logarithmically flat situation. The reason this is possible is because the double ramification is still, ``virtually logarithmically flat''. The same is true for all moduli spaces of logarithmic stable maps, and the methods are easily adaptable to that setting. 
\end{remark}

\begin{remark}[Strict transforms] Fix logarithmic cycle $\alpha:V\to Y$. Logarithmic flatness implies that the Chow homology $\alpha_\star[V]$ is determined by the image of the interior of $V$, where the logarithmic structure is trivial. If $Y'\to Y$ is a projective logarithmic modification, the fine logarithmic pullback $V'$ is, by definition, obtained by pulling back the monomial ideal whose blowup defines $Y'\to Y$ to $V$ and blowing it up there. Since $V$ is logarithmically flat, this has the same Chow homology class in $Y'$ as the closure of its interior. It coincides with the class of the strict transform. 
\end{remark}

\subsection{Chow operators: piecewise polynomials}

Let $Y$ be a logarithmic scheme. There are two combinatorial constructions associated to $Y$: its cone complex\footnote{In standard terminology, a general logarithmic scheme only has a cone stack or generalized cone complex rather than a cone complex, where self-gluings {along faces} and {quotients by automorphisms} may be allowed. We will use the terminology ``cone complex'' with the allowance of the generalized situation.} $\Sigma_Y$ and its Artin fan $\mathsf{A}_Y$. The cone complex is a classical construction from toroidal geometry originating in~\cite{KKMSD} and developed further by many others~\cite{ACP,CCUW,Thu07,U13}. The cone complex has a realization within logarithmic algebraic stacks via Artin fans~\cite{AW,Ols03}; see also the survey~\cite{ACMUW}. {The output of the equivalence is that} the cone complex and the Artin fan have presentations as the same colimit over all points $y$ in $Y$ of simpler pieces:
$$
\Sigma_Y = \varinjlim \sigma_y \\
$$
and 
$$
\mathsf{A}_Y = \varinjlim \mathsf{A}_y
$$
where $\sigma_y$ are polyhedral cones, and $\mathsf{A}_y = \textup{Spec}(k[P_y])/\textup{Spec}(k[P_y^\textup{gp}])$ for $P_y$ the dual monoid of $\sigma_y$. The first colimit may be taken in generalized cone complexes~\cite{ACP} or, $2$-categorically, in the category of stacks over cone complexes~\cite{CCUW}. The latter colimit is in {logarithmic} algebraic stacks. Both colimits are taken over points of $Y$, with arrows given by generization maps. Define 
$$
\textup{PP}(\Sigma_Y) := \varprojlim \textup{PP}(\sigma_y).
$$
We pause to flag a potential point of confusion. A cone complex $\Sigma_Y$ can be presented as a colimit of cones in multiple ways; for example, a cone can be presented as a trivial colimit, or as a colimit of all of its faces. Therefore, a priori, there is a question concerning the independence of presentation of $\Sigma_Y$ in the definition, and while there is essentially a best presentation~\cite[Section~2.6]{ACP}, the issue will be dodged -- the independence will follow from the main result of this section. {Given} a presentation, we have maps 

$$
\mathsf{CH}_{\textup{op}}^\star(\mathsf{A}_Y) \to \varprojlim \mathsf{CH}_{\textup{op}}^\star(\mathsf{A}_y) \xrightarrow{\sim} \varprojlim \textup{PP}(\sigma_y) = \textup{PP}(\Sigma_Y)
$$
The middle isomorphism is due to Payne~\cite[Theorem~1]{Pay06}. When $\mathsf A_Y$ is a global quotient of a toric variety by the dense torus, Payne's result equates the outer two rings. We have the following.

\begin{theorem}
\label{theorem:Chowpp}
Let $Y$ be a logarithmic stack with Artin fan $\mathsf A_Y$. There is a natural isomorphism
$$\mathsf{CH}_{\textup{op}}^\star(\mathsf{A}_Y) = \textup{PP}(\Sigma_Y),$$
and therefore a functorial map
\[
\textup{PP}(\Sigma_Y)\to \mathsf{CH}_{\textup{op}}(Y).
\]
\end{theorem} 

\subsubsection{Proof of Theorem~\ref{theorem:Chowpp}} We begin with a result of Bae--Park that the operational Chow rings of Artin stacks (of finite type, that admit stratifications by global quotients) with rational coefficients satisfy Kimura's descent with respect to blowups\footnote{The details of the argument of Bae--Park will appear in forthcoming work~\cite{BP21}. We take the opportunity to thank them for sharing their expertise on these matters.}. A blowup $\mathcal{Y}' \to \mathcal{Y}$ induces two exact sequences: 
\[
\begin{tikzcd}
\mathsf{CH}_\star(\mathcal{Y}' \times_{\mathcal{Y}} \mathcal{Y}') \ar[r] & \mathsf{CH}_\star(\mathcal{Y}') \ar[r] & \mathsf{CH}_\star(\mathcal{Y}) \ar[r] & 0
\end{tikzcd}
\]

\[
\begin{tikzcd}
\mathsf{CH}_{\mathsf{op}}^\star (\mathcal{Y}' \times_{\mathcal{Y}} \mathcal{Y}') & \mathsf{CH}_{\mathsf{op}}^\star(\mathcal{Y}') \ar[l] & \mathsf{CH}_{\mathsf{op}}^\star (\mathcal{Y}) \ar[l] & 0 \ar[l]
\end{tikzcd}
\]
Consequently, an analogue of \cite[Theorem 3.2]{Ki92} holds, by cosmetic changes to the proof: 

\begin{lemma}
Let $p: \mathcal{Y}' \to \mathcal{Y}$ be a blowup of (finite type, that admit stratification by global quotients) Artin stacks. Let $S_i$ be the irreducible components of the locus in $\mathcal Y'$ over which $p$ is not an isomorphism, and $T_i = p^{-1}(S_i)$ their preimages.  Then a class $\alpha \in \mathsf{CH}_{\mathsf{op}}^\star(\mathcal{Y}')$ is in the image of $\mathsf{CH}_{\mathsf{op}}^\star(\mathcal{Y})$ if and only if its restriction to each $T_i$ is in the image of the Chow ring of $S_i$.    
\end{lemma}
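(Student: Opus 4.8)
The plan is to reproduce Kimura's deduction of his descent criterion from the blowup exact sequences \cite[Theorem 3.2]{Ki92}, substituting the two one-sided sequences for Artin stacks supplied by Bae--Park for the scheme-theoretic ones. Once the correct Mayer--Vietoris sequence for the blowup square is in hand, both implications are a formal diagram chase, so the entire content of the argument is concentrated in the passage from the two displayed sequences to a single exact sequence. Throughout, $p\colon \mathcal{X}\to\mathcal{Y}$ is the blowup with blown-up stack $\mathcal{X}$; I write $\iota\colon S_i\hookrightarrow\mathcal{Y}$ for the centres, $j\colon T_i\hookrightarrow\mathcal{X}$ for the exceptional loci $T_i=p^{-1}(S_i)$, and $q\colon T_i\to S_i$ for the restriction of $p$, so that $p\circ j=\iota\circ q$. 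The class to be descended is a class $\alpha$ on the blown-up stack, tested against $p^\star$.

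First I would assemble the Mayer--Vietoris sequence
\[
0 \to \mathsf{CH}_{\mathsf{op}}^\star(\mathcal{Y}) \xrightarrow{(p^\star,\,\iota^\star)} \mathsf{CH}_{\mathsf{op}}^\star(\mathcal{X}) \oplus \bigoplus_i \mathsf{CH}_{\mathsf{op}}^\star(S_i) \xrightarrow{\,j^\star-q^\star\,} \bigoplus_i \mathsf{CH}_{\mathsf{op}}^\star(T_i).
\]
The starting point is the operational equalizer sequence $0\to \mathsf{CH}_{\mathsf{op}}^\star(\mathcal{Y})\xrightarrow{p^\star}\mathsf{CH}_{\mathsf{op}}^\star(\mathcal{X})\xrightarrow{\pr_1^\star-\pr_2^\star}\mathsf{CH}_{\mathsf{op}}^\star(\mathcal{X}\times_{\mathcal{Y}}\mathcal{X})$, which identifies the descended classes with those $\alpha$ satisfying $\pr_1^\star\alpha=\pr_2^\star\alpha$. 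The essential geometric input is that the double fibre product $\mathcal{X}\times_{\mathcal{Y}}\mathcal{X}$ consists, up to cycles irrelevant to this equalizer, of the diagonal copy of $\mathcal{X}$ together with the products $T_i\times_{S_i}T_i$ over the centres; the two projections agree on the diagonal and factor through $q$ on the remaining part. The condition thus localizes: it is automatic on the diagonal, and on each $T_i\times_{S_i}T_i$ it reads $\pr_1^\star(j^\star\alpha)=\pr_2^\star(j^\star\alpha)$, which by the same descent principle applied to $q\colon T_i\to S_i$ is equivalent to $j^\star\alpha$ lying in the image of $q^\star$. This is exactly the exactness of the displayed sequence at its middle term.

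With the sequence established, the two implications are immediate. For the ``only if'' direction, if $\alpha=p^\star\delta$ then $j^\star\alpha=j^\star p^\star\delta=q^\star\iota^\star\delta$ by $p\circ j=\iota\circ q$, so the restriction of $\alpha$ to each $T_i$ lies in the image of $\mathsf{CH}_{\mathsf{op}}^\star(S_i)$. Conversely, suppose $j^\star\alpha=q^\star\gamma_i$ for classes $\gamma_i\in\mathsf{CH}_{\mathsf{op}}^\star(S_i)$; then the pair $(\alpha,(\gamma_i)_i)$ lies in the kernel of $j^\star-q^\star$, so by exactness it equals $(p^\star\delta,\iota^\star\delta)$ for some $\delta\in\mathsf{CH}_{\mathsf{op}}^\star(\mathcal{Y})$, and in particular $\alpha=p^\star\delta$ descends.

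I expect the main obstacle to be the middle paragraph: verifying, in the stack setting, that the equalizer condition on $\mathcal{X}\times_{\mathcal{Y}}\mathcal{X}$ genuinely localizes to the diagonal and the $T_i\times_{S_i}T_i$, that the corresponding descent holds for each $q$, and that the Gysin restrictions $j^\star$, $q^\star$, $\iota^\star$ are defined and mutually compatible. All of this is formal once the two blowup sequences are available for stacks admitting stratifications by global quotients, which is precisely the Bae--Park input; no new geometry is introduced, and the ``cosmetic changes'' to Kimura's proof amount to checking that his analysis of the double-point locus and his use of the envelope sequences survive verbatim in this generality.
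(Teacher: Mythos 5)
Your proposal is essentially the intended argument: the paper gives no details beyond ``cosmetic changes to Kimura's proof,'' and what you describe --- identify the descended classes with the equalizer of the two pullbacks to $\mathcal{X}\times_{\mathcal{Y}}\mathcal{X}$ via the Bae--Park operational sequence, then analyse that fibre product as the union of the diagonal and the $T_i\times_{S_i}T_i$ --- is exactly Kimura's deduction of his Theorem~3.2 from Theorem~3.1, transported to stacks. Two remarks. First, you are right that the localization step carries the content; it does go through, because an operational class on a stack covered by finitely many closed substacks vanishes as soon as its restriction to each piece vanishes (compatibility with proper pushforward reduces the action on any test object to integral substacks, each of which maps into one of the closed pieces --- for Kresch's Chow groups one should also note that they are generated by operators applied to fundamental classes of integral substacks, which is where the ``stratified by global quotients'' hypothesis earns its keep). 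Second, your appeal to ``the same descent principle applied to $q\colon T_i\to S_i$'' is not covered by the stated input: the Bae--Park sequences are for blowups, and $q$ is a projective bundle (or a gerbe over one), not a blowup. Fortunately only the trivial direction of that equivalence is used in proving the lemma as stated ($j^\star\alpha=q^\star\gamma_i$ implies $\pi_1^\star j^\star\alpha=\pi_2^\star j^\star\alpha$ since $q\pi_1=q\pi_2$), so the lemma itself is unaffected; but the stronger Mayer--Vietoris exactness at the middle term, which you assert, genuinely requires extra input for $q$ (e.g.\ the projective bundle formula for operational Chow, or injectivity of $q^\star$), and you should either supply that or weaken the displayed sequence to the form actually needed.
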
 

Let now $Y$ be a logarithmic stack, and $\mathsf{A}_Y$ its Artin fan, as in the theorem statement. There is a blowup $\mathsf{A}_{Y'} \to \mathsf{A}_X$ with $\mathsf{A}_{Y'}$ smooth. For smooth Artin fans, we have
$$
\mathsf{CH}_{\mathsf{op}}^\star(\mathsf{A}_{Y'}) = \mathsf{PP}(\Sigma_{Y'}) 
$$
by~\cite[Theorem~14]{MPS20}. Therefore, Payne's theorem and proof, on the equivariant cohomology of toric varieties, generalizes. We provide the details.

Factor $\mathsf{A}_{Y'} \to \mathsf{A}_Y$ as a sequence of blowups along smooth centers. By induction on the number of blowups necessary to reach from a non-smooth $\mathsf{A}_X$ to a smooth $\mathsf{A}_Y$, we may assume that $\mathsf{A}_{Y'} \to \mathsf{A}_Y$ is a single blowup along a smooth stratum $\mathsf{A}_S$. Let $\mathsf{A}_T$ be it's preimage, but we note that despite the notation, it is not an Artin fan but a gerbe over one. From the Kimura descent sequence, an operator $\alpha \in \mathsf{CH}_{\mathsf{op}}^\star(\mathsf{A}_{Y'})$ comes from $\mathsf{A}_Y$ if its restriction to $\mathsf{A}_T$ comes from $\mathsf{A}_S$. Let $\eta$ be the open stratum of $\mathsf{A}_S$, corresponding to a cone $\sigma \in \Sigma_Y$. The stratum may have self-intersections or automorphisms. The stabilizer group of this point is $\mathbb{G}_m^n \rtimes G$. Then $\mathbb{G}_m^n \rtimes G$ injects into the automorphism groups of every point of $\mathsf{A}_S$, and the rigidification $\mathsf{A}_S / \mathbb{G}_m^n \rtimes G$ is an Artin fan. By construction, the cone complex of $\mathsf{A}_S$ is 
$$
\mathsf{Star}(\mathsf{A}_S) : = \varinjlim_{\sigma \prec \tau \in \Sigma_Y} \tau
$$
On the other hand, let $\overline{\tau}$ denote the image of $\tau$ in the quotient of the lattice generated by $\tau$ by the lattice generated by $\sigma$, i.e. the reduced star of $\tau$, defined as 
$$
\overline{\mathsf{Star}(\mathsf{A}_S)} : = \varinjlim_{\sigma \prec \tau \in \Sigma_Y} \overline{\tau}.
$$
The cone complex $\overline{\mathsf{Star}(\mathsf{A}_S)}$ then corresponds to the cone complex of the rigidification $\mathsf{A}_S/ \mathbb{G}_m^n \rtimes G$, and, by induction on the maximal rank of stabilizer groups, we have an isomorphism 
$$
\mathsf{PP}(\overline{\mathsf{Star}(\mathsf{A}_S)}) \cong \mathsf{CH}_{\mathsf{op}}^\star(\mathsf{A}_S/\mathbb{G}_m^n \rtimes G)
$$
We are working with $\mathbb{Q}$ coefficients, so the finite group $G$ does not contribute either to piecewise polynomials or Chow rings, and since $\mathsf{A}_S$ is a gerbe over $\mathsf{A}_S/\mathbb{G}_m^n \rtimes G$, we have 
$$
\mathsf{PP}(\mathsf{Star}(\mathsf{A}_S)) = \mathsf{PP}(\overline{\mathsf{Star}(\mathsf{A}_S)}) \otimes \QQ[\mathbb{Z}^n] \cong \mathsf{CH}_{\mathsf{op}}^\star(\mathsf{A}_S/\mathbb{G}_m^n \rtimes G) \otimes \QQ[\mathbb{Z}^n] \cong \mathsf{CH}_{\mathsf{op}}^\star(\mathsf{A}_S)
$$

\noindent
Looking at the map of short exact sequences  
\[
\begin{tikzcd}
0 \ar[r] & \mathsf{CH}_{\mathsf{op}}^\star(\mathsf{A}_Y) \ar[r] \ar[d] & \mathsf{CH}_{\mathsf{op}}^\star(\mathsf{A}_{Y'}) \ar[d] \\
0 \ar[r] & \mathsf{PP}(\Sigma_Y) \ar[r] & \mathsf{PP}(\Sigma_{Y'})
\end{tikzcd}
\] 
and applying the inductive hypothesis, 
\[
\mathsf{CH}_{\mathsf{op}}^\star(\mathsf{A}_{Y'}) = \mathsf{PP}(\Sigma_{Y'}),
\] 
and the map $\mathsf{CH}_{\mathsf{op}}^\star(\mathsf{A}_Y) \to \mathsf{PP}(\Sigma_Y)$ is injective. On the other hand, let $p \in \mathsf{PP}(\Sigma_Y)$ be a piecewise polynomial. Then $p$ pulls back to a piecewise polynomial on $\Sigma_{Y'}$, i.e. a Chow class on $\mathsf{A}_{Y'}$. By Kimura's sequence, this comes from a class on $\mathsf{A}_Y$ if and only if its restriction to $\mathsf{A}_T$ comes from $\mathsf{A}_S$. But the pullback of $p$ to $\mathsf{A}_T$ comes from a class on the cone complex of $\mathsf{A}_T$, which is a subdivision of $\mathsf{Star}(\mathsf{A}_Y)$, and we've seen that piecewise polynomials on the latter are the same as $\mathsf{CH}_{\mathsf{op}}^\star(\mathsf{A}_S)$. Therefore, the pullback of $p$ to $\mathsf{A}_T$ comes from a class on $\mathsf{A}_S$, and thus $p$ is the image of this class in $\mathsf{PP}(\Sigma_X)$. Thus the map $\mathsf{CH}_{\mathsf{op}}^\star(\mathsf{A}_Y) \to \mathsf{PP}(\Sigma_Y)$ is also surjective, and hence an isomorphism. Since we have a canonical map, given by the logarithmic structure, $Y\to \mathsf A_Y$, the pullback gives rise to the claimed homomorphism to the operational Chow ring. 
\qed

\begin{remark}
The piecewise polynomial rings of smooth Artin fans can be exotic. For example, the Chow ring of the stack of expanded smooth pairs is an Artin fan, and its Chow ring has been computed by Oesinghaus, and identified with the ring of quasisymmetric functions~\cite{Oes19}, which is of significant combinatorial interest. The above result applies to the Chow ring of the stack of expansions of simple normal crossings pairs, constructed in work of Maulik and the second author~\cite{MR20}. The resulting ring is combinatorial but not well understood. 
\end{remark}

\begin{remark}[$K$-theory and cobordism]
Anderson--Payne and Gonzalez--Karu describe the equivariant operational $K$-theory and algebraic cobordism rings of toric varieties as the rings of piecewise exponential functions and piecewise graded power series functions over the Lazard ring~\cite{AP15,GK15}. The method of proof is to establish an analogue of the descent sequence. A logarithmic scheme whose Artin fan is a global quotient (which can always be achieved after a blowup) is endowed with tautologically defined classes in these theories. It is plausible that their theorems will generalize to describe the operational theories on general Artin fans. Some logarithmic aspects of $K$-theory have been explored by Chou, Herr, and Lee~\cite{CHL20}. 
\end{remark}

\begin{remark}
Holmes and Schwarz define piecewise polynomial functions on logarithmic schemes without using Artin fans~\cite[Section~3]{HS21}. The approach is based on the identification between piecewise linear functions on $\Sigma_Y$ and sections of the characteristic abelian sheaf $\overline M_Y^{\mathsf{gp}}$ of $Y$. 
\end{remark}

\subsection{Aluffi's formula and strict transforms} Let $Y$ be a logarithmic scheme with structure morphism $\epsilon: M_Y \to \mathcal{O}_Y$. A \emph{monomial subscheme} of $Y$ is a subscheme of $X$ isomorphic to the vanishing of an ideal $\epsilon(I) \subset \mathcal{O}_Y$, for $I \subset M_Y$ an ideal in the sense of monoid theory. Ideals $I \subset M_X$ are in bijection with ideals of $\overline{I} \subset \overline{M}_Y$. An ideal $I \subset M_Y$ is equivalent to the choice of a substack of the Artin fan $\mathsf{A}_Y$. If $Y$ is logarithmically flat, the map $Y \to \mathsf{A}_Y$ is faithfully flat, the ideal $I \subset \overline{M}_Y$ is determined by $\epsilon(I) \subset \mathcal{O}_Y$. We therefore have the following: 

\begin{lemma}
Let $Y$ be a logarithmically flat scheme, and $\alpha_Y: Y \to \mathsf{A}_Y$ the map to its Artin fan. The monomial subschemes of $Y$ are precisely the subschemes of the form $\alpha_Y^{-1}(T)$ for $T$ a substack of $\mathsf{A}_Y$.  
\end{lemma}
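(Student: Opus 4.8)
The plan is to prove the two inclusions separately, using the strictness of $\alpha_X$ to transport the dictionary between monomial ideals and substacks from $\mathsf{A}_X$ to $X$, and reserving the logarithmic flatness hypothesis for the point at which the comparison must be made at the level of schemes rather than of ideals. The combinatorial input—that substacks $T \subset \mathsf{A}_X$ are in bijection with ideals $\overline{J} \subset \overline{M}_{\mathsf{A}_X}$—is recorded before the statement: on an Artin cone $\mathsf{A}_x = \Spec(k[P_x])/\Spec(k[P_x^{\mathrm{gp}}])$ a substack is a torus-invariant closed subscheme of $\Spec(k[P_x])$, hence cut out by a monomial ideal $\overline{J}_x \subset P_x$, and these glue over the presentation $\mathsf{A}_X = \varinjlim \mathsf{A}_x$.

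For the inclusion that every $\alpha_X^{-1}(T)$ is monomial, I would pull back along the strict morphism $\alpha_X$. Strictness gives $\overline{M}_X = \alpha_X^{-1}\overline{M}_{\mathsf{A}_X}$, so the ideal $\overline{J}$ defining $T$ pulls back to an ideal $\overline{I} \subset \overline{M}_X$, lifting to $I \subset M_X$. On each chart $T$ is defined by the monomials $x^p$ with $p \in \overline{J}$, and these pull back under $\alpha_X$ precisely to the sections $\epsilon(I)$; hence $\alpha_X^{-1}(T) = V(\epsilon(I))$ is a monomial subscheme. This direction is essentially formal, requiring only strictness and the definition of the scheme-theoretic inverse image.

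For the reverse inclusion, I would begin with a monomial subscheme $V(\epsilon(I))$, pass to the ideal $\overline{I} \subset \overline{M}_X$ associated to $I$, and invoke the dictionary above (together with strictness identifying $\overline{M}_X$ with $\alpha_X^{-1}\overline{M}_{\mathsf{A}_X}$) to produce a substack $T \subset \mathsf{A}_X$ whose defining ideal pulls back to $\overline{I}$; then $\alpha_X^{-1}(T) = V(\epsilon(I))$ by the computation of the previous paragraph. The one genuinely nonformal point—and the step I expect to be the main obstacle—is that this correspondence must be well defined on schemes rather than on combinatorial ideals: \emph{a priori} distinct monomial ideals could cut out the same subscheme, and one must know that $V(\epsilon(I))$ remembers $\overline{I}$. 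This is exactly where logarithmic flatness enters. Flatness of $\alpha_X$ keeps the inclusion $\mathcal{J}_T \hookrightarrow \mathcal{O}_{\mathsf{A}_X}$ injective after pullback, so that $\alpha_X^\ast\mathcal{J}_T\cdot\mathcal{O}_X$ introduces no spurious correction, and faithfulness forces $\epsilon(I) \subset \mathcal{O}_X$ to determine $\overline{I} \subset \overline{M}_X$, i.e. the assignment $\overline{I} \mapsto \epsilon(I)\mathcal{O}_X$ is injective by faithfully flat descent of ideals. Consequently the bijection between monomial ideals and substacks transports faithfully to one between monomial subschemes and substacks, and the two families coincide. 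For a scheme that is not logarithmically flat this descent fails, consistent with the separate treatment of the double ramification cycle later in the paper.
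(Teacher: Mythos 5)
Your proposal is correct and follows essentially the same route as the paper: the paper offers no separate proof, presenting the lemma as an immediate consequence of the preceding discussion (ideals $I\subset M_X$ correspond to ideals $\overline I\subset\overline M_X$, hence via strictness to substacks of $\mathsf A_X$, with faithful flatness of $X\to\mathsf A_X$ guaranteeing that $\overline I$ is determined by $\epsilon(I)\subset\mathcal O_X$). Your write-up simply spells out the two inclusions and correctly isolates faithful flatness as the one nonformal ingredient, exactly where the paper invokes it.
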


Suppose $Y$ is logarithmically flat logarithmic scheme (or stack), and $S = \alpha_Y^{-1}(T)$ a monomial subscheme of $Y$. The Segre class is preserved under flat base change~\cite[Proposition~4.2]{Ful84} and we have the following:
$$
s(S,Y) = \alpha_X^\star s(T,\mathsf{A}_Y)
$$

Monomial substacks $T\subset \mathsf A_Y$ can be both non-equidimensional and non-reduced. The operational Chow ring of $T$ is typically far from the Chow homology groups. A supply of \textit{homology} classes can be extracted using piecewise polynomials.

\begin{definition}
A homology class in $\mathsf{CH}_\star(T)$ is said to \textit{come from piecewise polynomials} if it is a linear combination of classes of the form $\pi_\star(\gamma\cap [R])$ where $\pi$ is a blowup of $\mathsf A_Y$, the class $\gamma$ is a piecewise polynomial on this blowup, and $R$ is a pure-dimensional substack of the blowup that maps to $T$. A homology class in $\mathsf{CH}_\star(\alpha_Y^{-1}(T))$ is said to \textit{come from piecewise polynomials} if it is if it the flat pullback of a homology class on $T$ that comes from piecewise polynomials.
\end{definition}

We have the following:

\begin{proposition}
The Segre class of a monomial subscheme of a logarithmically flat and tropically smooth scheme comes from piecewise polynomials. 
\end{proposition}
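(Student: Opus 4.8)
The plan is to reduce everything to the Artin fan, where tropical smoothness makes Aluffi's formula directly applicable, and then to read off that the resulting expression is of the required shape. Write $S = \alpha_X^{-1}(T)$ for a monomial substack $T \subset \mathsf{A}_X$, as in the preceding lemma. By the definition of coming from piecewise polynomials on $\alpha_X^{-1}(T)$, it is enough to produce a class on $T$ that comes from piecewise polynomials and pulls back to $s(S,X)$ along the flat map $\alpha_X$. Since $X$ is logarithmically flat, $\alpha_X$ is faithfully flat, and the identity $s(S,X) = \alpha_X^\star s(T,\mathsf{A}_X)$ recorded just above holds. It therefore suffices to show that $s(T,\mathsf{A}_X) \in \mathsf{CH}_\star(T)$ comes from piecewise polynomials on $\mathsf{A}_X$, with no auxiliary blowup.

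Here I would invoke tropical smoothness. The hypothesis that the characteristic monoid at every point is free says that each cone $\sigma_x$ of $\Sigma_X$ is isomorphic to $\mathbb{R}^{r}_{\geq 0}$, equivalently that each Artin cone $\mathsf{A}_x = [\Spec k[\mathbb{N}^r]/\mathbb{G}_m^r]$ is smooth. Thus $\mathsf{A}_X$ is smooth and its toric boundary divisors $D_1,\ldots,D_n$, indexed by the rays of $\Sigma_X$, form a \emph{regular crossings} configuration. This is precisely the input demanded by Aluffi's formula, which we apply equivariantly: the Artin cones are quotients of smooth affine toric varieties by their dense tori, so the universal Segre formula descends to the stack. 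Because $\mathsf{A}_X$ is already smooth we may take $\pi = \id$ in the definition of coming from piecewise polynomials.

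Applying Aluffi's formula to the monomial subscheme $T$, I would write $s(T,\mathsf{A}_X)$ as a finite sum of terms $P_i\bigl(c_1(L_1),\ldots,c_1(L_n)\bigr) \cap [R_i]$, where $R_i$ runs over the closed toric strata contained in $T$ and $P_i$ is the polynomial dictated by the Newton region. Each $R_i$ is a closed, pure-dimensional substack of $\mathsf{A}_X$ contained in $T$, so $[R_i]$ supplies the cycle in the definition. The decorations are polynomials in the first Chern classes of the boundary line bundles $L_j$; under the isomorphism of Theorem~\ref{theorem:Chowpp}, each $c_1(L_j)$ is identified with the piecewise linear function on $\Sigma_X$ that has slope one along the $j$-th ray and slope zero on the others, so $P_i(c_1(L_1),\ldots,c_1(L_n))$ is a piecewise polynomial $\gamma_i$ on $\mathsf{A}_X$. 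Hence $s(T,\mathsf{A}_X) = \sum_i \gamma_i \cap [R_i]$ exhibits the Segre class as a class coming from piecewise polynomials, and pulling back along the flat morphism $\alpha_X$ completes the argument.

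The step I expect to be the main obstacle is the transfer of Aluffi's formula, stated for schemes, to the Artin stack $\mathsf{A}_X$, together with the bookkeeping that identifies the Chern-class decorations with genuine piecewise polynomials via Theorem~\ref{theorem:Chowpp} rather than merely with abstract operational classes. The remaining points -- that the strata $R_i$ appearing in the Newton-region expansion are pure-dimensional and contained in $T$, and that tropical smoothness is exactly what guarantees the regular crossings hypothesis -- are routine once the equivariant form of Aluffi's formula is in hand.
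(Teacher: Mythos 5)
There is a genuine gap, and it sits exactly where you flagged the argument as ``routine'': the claim that tropical smoothness makes the boundary divisors of $\mathsf{A}_X$, ``indexed by the rays of $\Sigma_X$,'' into a regular crossings configuration to which Aluffi's formula applies directly with $\pi = \id$. Tropical smoothness is a local condition (each characteristic monoid is free, so each cone of $\Sigma_X$ is smooth), but it does not rule out self-gluings or monodromy in the generalized cone complex. A single ray of $\Sigma_X$ can then correspond to a boundary divisor with self-intersections, so there is no globally defined collection $D_1,\ldots,D_n$ of distinct Cartier divisors realizing $T$ as a monomial subscheme in Aluffi's sense. This is not a pathological edge case: it is the generic situation for the paper's main application, where the divisor of irreducible nodal curves in $\Mbar_{g,n}$ self-intersects, yet the deeper strata are still tropically smooth. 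Equivalently, in the language of the paper's own lemma, regular crossings is the condition that there exist a strict (or logarithmically flat) morphism to $[\A^s/\mathbb{G}_m^s]$, and tropical smoothness alone does not provide one.

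The paper's proof repairs exactly this point: it first performs a sequence of logarithmic blowups $p : X' \to X$ along smooth centers so that $\mathsf{A}_{X'}$ is tropically smooth, \emph{monodromy free}, and admits a strict morphism to $[\A^s/\mathbb{G}_m^s]$; Aluffi's formula then applies on $X'$, and the birational invariance of Segre classes gives $s(S,X) = p_\star s(S',X')$ for $S' = p^{-1}(S)$. The definition of ``coming from piecewise polynomials'' was written to accommodate precisely this, since it allows classes of the form $\pi_\star(\gamma \cap [R])$ for $\pi$ a blowup of $\mathsf{A}_X$ — so insisting on $\pi = \id$ both loses generality and breaks the argument. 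The rest of your outline (reducing to the Artin fan via flatness of $\alpha_X$, identifying the Chern decorations with piecewise polynomials via Theorem~\ref{theorem:Chowpp}, and descending Aluffi's scheme-level formula to the quotient stacks) is consistent with the paper's intent; inserting the preliminary blowup and the pushforward step would make your proof essentially the paper's.
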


This lemma is existential and is an immediate consequence of a constructive theorem, namely Aluffi's formula for the Segre class of a monomial subscheme presented in~\cite{Alu13}. Aluffi states his formula in terms of \textit{regular crossings} pairs. Let $Y$ be an integral scheme and $D_1,\ldots D_s$ be divisors on $X$. They have \textit{regular crossings} if at every point $p$ in the intersection $D_{i_1} \cap \cdots \cap D_{i_j}$, the local equations for the divisors meeting $p$ form regular sequences in the local ring of $X$ at $p$. 

\begin{lemma}
A pair $(Y,D)$ is regular crossings if and only if the tautological induced map
\[
Y\to [\A^s/\mathbb G_m^s]
\]
is flat. 
\end{lemma}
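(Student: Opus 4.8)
The plan is to check both conditions \'etale-locally on $X$ and to reduce the logarithmic flatness of $X\to[\A^s/\mathbb{G}_m^s]$ to an honest flatness statement over affine space, controlled by a Koszul-complex computation. Recall that, in the convention of this section, the map $X\to[\A^s/\mathbb{G}_m^s]$ is logarithmically flat precisely when it is flat as a morphism to this Artin fan, i.e. when its pullback along the smooth atlas $a\colon\A^s\to[\A^s/\mathbb{G}_m^s]$ is flat. Both flatness and the regular crossings property are local on $X$, so I would first pass to a Zariski open $U\subset X$ on which each line bundle $\mathcal{O}(D_i)$ is trivialized. Choosing trivializations exhibits $D_i=V(f_i)$ for local equations $f_i\in\mathcal{O}(U)$ and lifts the tautological map to a strict morphism $\phi=(f_1,\dots,f_s)\colon U\to\A^s$ with $a\circ\phi$ the map to the stack. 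The fibre product $U\times_{[\A^s/\mathbb{G}_m^s]}\A^s$ is then the torsor $U\times\mathbb{G}_m^s$, and the projection to the second copy of $\A^s$ becomes the multiplication map $g(u,\lambda)=(\lambda_1 f_1(u),\dots,\lambda_s f_s(u))$. Logarithmic flatness of $X\to[\A^s/\mathbb{G}_m^s]$ over $U$ is, by definition, the flatness of $g$.

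Next I would strip the torus directions off of $g$. The map $g$ is equivariant for the $\mathbb{G}_m^s$-action scaling the $\lambda$-coordinates on the source and scaling $\A^s$ on the target; since scaling is an automorphism of $\A^s$, flatness of $g$ at a point $(u_0,\lambda_0)$ is equivalent to flatness at $(u_0,\mathbf{1})$. Near such a point I would write $\lambda_i=1+t_i$, so that the $(1+t_i)$ are units in the local ring and $g$ is, up to these units and the free parameters $t_i$, the pullback of $\phi$. Concretely, after separating the indices into $J=\{i: f_i(u_0)=0\}$ and its complement, where the corresponding $x_i$ is inverted in the localized target and contributes nothing, the question becomes whether $\{(1+t_i)f_i\}_{i\in J}$, equivalently $\{f_i\}_{i\in J}$, is a regular sequence in $\mathcal{O}_{U,u_0}[t_i]_{\mathfrak m}$. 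Because adjoining the free variables $t_i$ is faithfully flat and both preserves and reflects regular sequences of elements of the base, this holds if and only if $\{f_i\}_{i\in J}$ is a regular sequence in $\mathcal{O}_{U,u_0}$ itself, which is exactly the regular crossings condition at $u_0$.

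The technical heart is the equivalence, for a local homomorphism from the regular local ring $R=k[x_1,\dots,x_d]_{(x)}$ sending $x_i\mapsto f_i$ into the maximal ideal of a Noetherian local ring $S$, between flatness of $S$ over $R$ and the property that $f_1,\dots,f_d$ is a regular sequence in $S$. I would prove this by resolving the residue field $k=R/(x_1,\dots,x_d)$ by the Koszul complex $K_\bullet(x_1,\dots,x_d;R)$, which is legitimate since the $x_i$ form a regular sequence in the regular ring $R$, and then observing $K_\bullet(x;R)\otimes_R S=K_\bullet(f;S)$. Thus $\mathrm{Tor}^R_\bullet(k,S)$ is the Koszul homology of $(f_i)$ on $S$; by the depth-sensitivity of the Koszul complex this vanishes in positive degrees if and only if $(f_i)$ is a regular sequence, and by the local criterion for flatness the vanishing of $\mathrm{Tor}^R_1(k,S)$ is equivalent to flatness. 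Applying this with $d=|J|$ to the local model of $g$ completes the computation, and assembling over all points of $U$ and all charts gives the lemma.

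I expect the main obstacle to be bookkeeping rather than conceptual: correctly passing from the stacky target $[\A^s/\mathbb{G}_m^s]$ to the affine local model, and verifying that the $\mathbb{G}_m^s$-directions impose no flatness constraint. In particular one must check that the indices $i$ with $f_i(u_0)\neq 0$ genuinely drop out, since the corresponding coordinate on $\A^s$ is inverted and the map is smooth in that direction, and that multiplying the generators by the units $1+t_i$ and adjoining the free parameters $t_i$ neither creates nor destroys the regular sequence property. Once these reductions are in place, the Koszul argument is formal and runs in both directions simultaneously.
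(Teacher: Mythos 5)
Your proof is correct and follows essentially the same route as the paper's: both directions reduce to the standard equivalence, over the regular local rings of $\A^s$, between flatness and the local equations forming a regular sequence --- the paper simply cites flat pullback of regular sequences for one implication and the flatness criterion of \cite[Lemma 07DY]{stacks-project} for the other, where you unwind the chart, the $\mathbb G_m^s$-torsor, and the Koszul complex explicitly. Your extra bookkeeping (discarding the indices with $f_i(u_0)\neq 0$ and the free variables $t_i$) is sound and closes the gaps you flagged.
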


\begin{proof}
The toric Artin stack $[\A^s/\mathbb G_m^s]$ with its toric boundary has regular crossings, because it has normal crossings. Regular sequences are preserved by flat pullback, so $(Y,D)$ is regular crossings. Conversely, given a regular crossings pair, since the codomain is smooth, we apply the criterion in~ \cite[\href{https://stacks.math.columbia.edu/tag/07DY}{Lemma 07DY}]{stacks-project} to conclude flatness. 
\end{proof}


\subsubsection*{Proof of Proposition via construction.}
There is a sequence of logarithmic blowups $p:Y' \to Y$ along smooth centers with $\mathsf{A}_{Y'}$ tropically smooth, monodromy free, and admits a strict morphism to $[\A^s/\mathbb G_m^s]$. The statement of the proposition is known in this case. As $Y$ is logarithmically flat and blowups are logarithmically \'etale, the space $Y'$ is also logarithmically flat and $Y' \to Y$ is birational. The birational invariance of Segre classes~\cite[Proposition~4.2]{Ful84} gives
$$
s(S,Y) = p_\star	 s(S',Y')
$$
for $S' = p^{-1}(S)$. The result follows.
\qed

We return to logarithmic intersection products. Fix $Y$ logarithmically smooth and logarithmic cycles $V_1,V_2\to Y$. In order to study the intersection product, we calculate the strict transform of a the cycle under a blowup $Y'\to Y$. It is sufficient to treat the case of a sequence of blowups of $Y$ along smooth centers, and in turn, a single blowup
\[
\widetilde Y\to Y.
\]
Theorem~\ref{thm: strict-transform} controls the logarithmic intersection product. It is restated and proved below.

\begin{theorem}
Let $V\to Y$ be a logarithmic cycle and let $\widetilde Y\to Y$ be a blowup at a smooth center $X$. Let $\widetilde X$ be the exceptional divisor. The difference between the class of the pullback $[V]^{\mathsf{sch}}$ and the class of the strict transform $[V]^{\mathsf{log}}$ is given by a universal formula involving the standard operations of intersection theory and the following:
\begin{enumerate}[(i)]
\item the Chow homology classes associated to strata of $V$ decorated by piecewise polynomial functions, 
\item the piecewise polynomial on $\widetilde Y$ corresponding to the excess normal bundle of $\widetilde X$. 
\end{enumerate}
\end{theorem}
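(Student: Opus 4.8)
The plan is to mirror the toric synthesis of Section~\ref{sec: toric-int-thy}, with the logarithmically smooth $Y$ playing the role of the toric variety, its Artin fan $\mathsf{A}_Y$ playing the role of the torus quotient $[Y/T]$, and the logarithmic cycle $V$ replacing the closure of a subvariety. Since $Y' \to Y$ is a blowup along a smooth, hence regularly embedded, stratum $X$ with exceptional divisor $\widetilde X$, Fulton's refined blowup formula~\cite[Example~6.7.1]{Ful84} applies verbatim to the morphism $V \to Y$. By the Remark on strict transforms, logarithmic flatness of $V$ identifies $[V]^{\mathsf{log}}$ with the class of the strict transform of $V$, while $[V]^{\mathsf{sch}}$ is the ordinary refined Gysin pullback $f^\star[V]$. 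Fulton's formula therefore realizes the difference $[V]^{\mathsf{sch}} - [V]^{\mathsf{log}}$ as a class supported on the exceptional locus, of the shape
\[
j'_\star\left\{ c(\mathbb E) \cap g'^\star s(X' \cap V, V) \right\}_{\mathsf{exp}},
\]
where $X' \cap V$ is the scheme-theoretic preimage of the center in $V$ and $\mathbb E$ is the excess normal bundle over $\widetilde X$. It then remains to identify the two factors with the data in (i) and (ii).

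First I would treat the Segre factor $s(X' \cap V, V)$. As $V$ is logarithmically flat, the ideal of $X$ pulled back to $V$ is monomial with respect to the boundary divisors of $V$; by the regular-crossings Lemma this is exactly the assertion that the induced map to $[\A^s / \mathbb{G}_m^s]$ is logarithmically flat, so $X' \cap V$ is a monomial subscheme of $V$. Invoking the flat-pullback identity $s(X' \cap V, V) = \alpha_V^\star s(T, \mathsf{A}_V)$, the computation descends to a monomial Segre class on the Artin fan $\mathsf{A}_V$ and is transported back by flat pullback, and the Proposition on Segre classes of monomial subschemes then guarantees that $s(X' \cap V, V)$ \emph{comes from piecewise polynomials}. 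Concretely, Aluffi's universal formula writes it as a sum of classes of strata of $V$ decorated by Chern operators of the boundary line bundles, which by Theorem~\ref{theorem:Chowpp} are piecewise polynomial functions on $\Sigma_V$. This furnishes ingredient (i).

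Next I would treat the Chern factor $c(\mathbb E)$. Exactly as in the toric Chern lemma, the total Chern class of the excess normal bundle is the restriction to $\widetilde X$ of the explicit piecewise polynomial
\[
c(\mathbb E) = \frac{\prod_i (1 + \ell_i)}{1 + \widetilde\ell}
\]
on $\widetilde Y$, where the $\ell_i$ are the piecewise linear functions attached to the rays of the cone of $X$ and $\widetilde\ell = \min_i \ell_i$ is attached to the subdividing ray, the denominator being expanded as a power series. Here Theorem~\ref{theorem:Chowpp} is indispensable: it is what licenses reading $c(\mathbb E)$ off as the image in operational Chow of a piecewise polynomial on $\Sigma_{\widetilde Y}$, supplying ingredient (ii). Assembling the two factors by the projection formula for $c(\mathbb E)$ together with the compatibility of proper pushforward with the flatly pulled-back Segre class — the manipulation carried out over the diagram of Section~\ref{sec: Segre} — produces the asserted universal formula, which recovers Theorem~\ref{thm: strict-transform}.

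The chief obstacle is establishing that the Segre class $s(X' \cap V, V)$ is genuinely governed by Aluffi's universal expression despite $V$ being merely logarithmically flat rather than smooth, and that the inclusion $X' \cap V \hookrightarrow X'$, which admits no evident Gysin pullback, causes no difficulty. This is precisely where logarithmic flatness does the essential work: faithful flatness of $V \to \mathsf{A}_V$ forces the pulled-back ideal to be monomial with regular crossings, so that the entire Segre computation may be performed universally on the Artin fan and transported to $V$ by flat pullback, independently of the chosen model $Y'$. Once this reduction to the Artin fan is in place — passing if necessary to a tropical resolution of $V$, which leaves the pushed-forward class unchanged by birational invariance of Segre classes — the remaining steps are formal, and the universality of Aluffi's formula guarantees that the resulting expression depends only on the Newton data of the center.
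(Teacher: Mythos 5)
Your proposal is correct and follows essentially the same route as the paper's own proof: Fulton's refined blowup formula isolates the difference as a class on the exceptional locus, the Segre term is shown to come from piecewise polynomials via monomiality and Aluffi's formula on the Artin fan, and the excess Chern class is identified as a piecewise polynomial class pulled back from the Artin fan of $\widetilde Y$. The paper's proof is a compressed version of exactly this argument (it explicitly defers to the toric case you spell out), so no further comment is needed.
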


\begin{proof}
The proof is parallel to the toric case. Let $\mathbb E$ denote the excess normal bundle of the exceptional divisor $\widetilde X$ over the center $X$ of the blowup. The difference between the strict and total transforms is determined by Fulton's formula, and is the pushforward of a class on $\widetilde X$. The Segre class of $X\cap V$ in $V$ comes from piecewise polynomials. This accounts for the first term. Since the maps $Y'\to \mathsf A_{Y'}$ and $Y\to \mathsf A_{Y}$ are smooth, the class $c(\mathbb E)$ is pulled back from $\mathsf A_{Y'}$, and therefore comes from piecewise polynomials as well. 
\end{proof}

\subsection{Interlude: Gysin pullback} We have discussed an intersection product on nonsingular logarithmic schemes based on strict transforms. We turn to logarithmic local complete intersections. 

We require generalities on Gysin pullbacks induced by maps on Artin fans. The results appear already in~\cite{Bar18, Herr}. We rework the formalism to be more transparently connected to our calculations. Fix a morphism $f:X \to Y$ between fine logarithmic schemes. By the {\it substitute for functoriality} in~\cite[Section~2.5]{AW} this map induces a commutative diagram:
\[
\begin{tikzcd}
X \ar[r, "\alpha_X"] \ar[d,"f",swap] & \mathsf{A}_{X/Y} \ar[d, "\mathsf{A}_f"] \\ Y \ar[r,"\alpha_Y",swap] & \mathsf{A}_Y 
\end{tikzcd}
\]
with strict horizontal arrows. The diagram is typically \textit{not} Cartesian so we factorize it: 
\[
\begin{tikzcd}
X \ar[r,"h"] & \mathsf{A}_{X/Y} \times_{\mathsf{A}_Y} Y \ar[r, "\alpha_Y'"] \ar[d,"\mathsf{A}_f'",swap]\drar[phantom, "\square"] & \mathsf{A}_{X/Y} \ar[d, "\mathsf{A}_f"] \\ & Y \ar[r,"\alpha_Y",swap] & \mathsf{A}_Y 
\end{tikzcd}
\]

\noindent with $h$ strict. If we further assume that the $X$ and $Y$ are tropically smooth, the stacks $\mathsf{A}_{X/Y}$ and $\mathsf{A}_Y$ are smooth, so the map $\mathsf{A}_f$ is lci. Therefore, we have a pullback 

$$
\mathsf{A}_f^{!}:\mathsf{CH}_\star(Y) \to \mathsf{CH}_\star(Y \times_{\mathsf{A}_Y} \mathsf{A}_X)
$$
\underline{Assuming} that the map $h$ is lci, we then define 
$$
h^{!}\mathsf{A}_f^{!}: \mathsf{CH}_\star(Y) \to \mathsf{CH}_\star(X)
$$

We spell out the situations of interest where $h$ is lci below. The second case will be our case of interest, but other situations may be useful to others:

\begin{description}
\item[Case I] If $Y$ is logarithmically flat over the base field, the map $\alpha_Y$ is flat. Therefore the map $\mathsf{A}_f'$, being the flat pullback of an lci map is lci. Therefore, when $h$ is lci, the map $f$ is lci, and the pullback is the ordinary lci pullback $f^!$.

\item[Case II]  If $X,Y$ are logarithmically smooth, then, being tropically smooth implies that the underlying schemes of $X,Y$ are smooth and the maps $f,h$ are automatically lci. The pullback is $f^!$, as in case $1$, without further assumptions. This is the main case of interest in this paper.

\item[Case III] The map $f:X \to Y$ is logarithmically flat. Then the map $h$, though not necessarily lci, is flat, and thus a pullback $h^\star\mathsf{A}_f^!$ still exists. It yields the same pullback as $h^{!}\mathsf{A}_f^!$ if $\mathsf{A}_f$ is also lci.

\item[Case IV] The map $f$ is weakly semistable in the sense of~\cite{AK00,Mol16}. Then $\mathsf{A}_f$ is flat, and thus so is $\mathsf{A}_f'$. Thus, in this case the pullback reduces to $h^!\mathsf{A}_f'^\star$, without assumptions about $X,Y$ being tropically smooth. Note that if $f$ is weakly semistable, the relevant diagram above reduces to the simpler diagram:
\[
\begin{tikzcd}
X \ar[r, "\alpha_X"] \ar[d,"f"] & \mathsf{A}_{X} \ar[d, "\mathsf{A}_f"] \\ Y \ar[r] & \mathsf{A}_Y 
\end{tikzcd}
\]

\end{description}

\subsection{Intersection via strict transforms: logarithmic complete intersections}\label{sec: logarithmic-pullbacks}

Olsson \cite{Ols03} has explained that there is no theory of the logarithmic cotangent complex $L_{X/Y}^{\mathsf{log}}$ of a morphism of logarithmic schemes that is perfectly parallel to the classical theory. It is not possible to construct a logarithmic cotangent complex $L_{X/Y}^\mathsf{log}$ for any morphism of logarithmic schemes, which at once reduces to the ordinary cotangent complex of a strict map, is represented by the logarithmic cotangent bundle for a logarithmically smooth map, is stable under logarithmically flat maps, and has a distinguished triangle associated to a composition. Consequently, there is no ``best" possible theory of logarithmic lci maps: there is no class of maps, closed under composition and logarithmically flat pullback, in which every logarithmically smooth morphism is a logarithmically local complete intersection, and the strict logarithmic lci maps are precisely the ordinary lci maps. We work with Olsson's version. However, Olsson proves that for logarithmically flat maps, his version coincides with Gabber's and since we have this assumption throughout, the reader may choose either formalism.

We content ourselves with schemes which are \emph{logarithmically flat} over the base field. The assumption is not sharp, but simplifies the theory and covers the examples of Section~\ref{sec: related-problems} and more.

\begin{definition}
A morphism $f:X \rightarrow Y$ of fine log schemes is a \textit{logarithmic local complete intersection morphism} or \textit{log lci} for short, if it factors as $X \to P \to Y$ with $i:X \to P$ a strict regular embedding and $g:P \to Y$ logarithmically smooth.  
\end{definition}

\begin{lemma}
The log cotangent complex of a log lci morphism $f: X \to Y$ is perfect. 
\end{lemma}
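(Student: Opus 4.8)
The plan is to exploit the defining factorization directly, without developing any new machinery. We are free to fix, once and for all, a theory of the log cotangent complex $L^{\mathsf{log}}_{X/Y}$ --- for instance the one referenced in~\cite{Ols03} --- satisfying three of the four properties discussed just above: it reduces to the ordinary cotangent complex along strict morphisms, it is computed by the locally free sheaf of log differentials for logarithmically smooth morphisms, and it carries a transitivity triangle for a composition. The point of Olsson's remark is precisely that one cannot in addition demand stability under logarithmically flat base change; since that fourth property is never used here, nothing is lost by working with such a theory. Note also that we only need to prove perfection of the intrinsic object $L^{\mathsf{log}}_{X/Y}$, so the factorization serves purely as a computational tool and its canonicity is irrelevant.

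Next I would write $f$ as $X \xrightarrow{i} P \xrightarrow{g} Y$ with $i$ a strict regular embedding and $g$ logarithmically smooth, as in the definition of log lci, and feed this into the transitivity triangle
$$
Li^\star L^{\mathsf{log}}_{P/Y} \longrightarrow L^{\mathsf{log}}_{X/Y} \longrightarrow L^{\mathsf{log}}_{X/P} \xrightarrow{+1}.
$$
I would then identify the two outer terms. Since $g$ is logarithmically smooth, $L^{\mathsf{log}}_{P/Y} \simeq \Omega^{\mathsf{log}}_{P/Y}$ is a locally free sheaf of finite rank in degree $0$, so its derived pullback $Li^\star L^{\mathsf{log}}_{P/Y}$ is perfect. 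Since $i$ is strict, $L^{\mathsf{log}}_{X/P} \simeq L_{X/P}$, the ordinary cotangent complex; and since $i$ is moreover a regular embedding, $L_{X/P} \simeq N^\vee_{X/P}[1]$ is a locally free sheaf placed in a single degree, hence perfect. As perfect complexes are closed under cones in a distinguished triangle, the middle term $L^{\mathsf{log}}_{X/Y}$ is perfect, which is the claim.

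The main obstacle is not the formal two-out-of-three step but the justification that the chosen theory genuinely supplies the three properties I invoke for this particular composite. The transitivity triangle is the delicate input: in the logarithmic setting it is unavailable for arbitrary composites under naive functoriality, and one must confirm it holds when the first map is a strict regular embedding and the second is logarithmically smooth. I expect this to follow from the base-change-free part of Olsson's formalism together with the identification of $L^{\mathsf{log}}$ with the ordinary cotangent complex along strict maps, so that the displayed triangle is literally the classical transitivity triangle for $X \to P \to Y$, with the log differentials of $g$ inserted in place of $L_{P/Y}$. Once this compatibility is pinned down the perfection of the two ends is routine --- log smoothness yields a bundle, and a regular embedding yields a shifted conormal bundle --- and the conclusion is immediate.
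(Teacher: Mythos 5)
Your proof is correct and takes essentially the same route as the paper's: factor $f$ as a strict regular embedding followed by a logarithmically smooth map, apply the transitivity triangle for the log cotangent complex (the paper justifies its existence by noting $g$ is logarithmically flat), and conclude by two-out-of-three for perfection. Your explicit identifications of the outer terms as $\Omega^{\mathsf{log}}_{P/Y}$ and $N^\vee_{X/P}[1]$ merely spell out what the paper summarizes as ``perfect by assumption.''
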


\begin{proof}
Factor $f:X \to Y$ as $i: X \to P$, $g: P \to Y$. As $g$ is logarithmically flat, there is a distinguished triangle of log cotangent complexes (i.e. even in Olsson's formulation of the cotangent complex)
\[
\begin{tikzcd}
i^\star L_{P/Y}^\mathsf{log} \ar[r] & L_{X/Y}^\mathsf{log} \ar[r] & L_{X/P}^{\mathsf{log}}.
\end{tikzcd}
\]
Perfect complexes are closed under taking cones, so we conclude.
\end{proof}

\begin{lemma}
\label{lem: basechangeoflci}
Let $f:X \to Y$ be a log lci map between logarithmically flat schemes. Then, for any logarithmically \'etale map $a:Y' \to Y$, the base change $f': X' = X \times_Y^{\mathsf{log}} Y' \to Y'$ is log lci. 
\end{lemma}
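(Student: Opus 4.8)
The plan is to base change the defining factorization of $f$ and check that each property survives. Write $f$ as $X \xrightarrow{i} P \xrightarrow{g} Y$ with $i$ a strict regular embedding and $g$ logarithmically smooth. Set $P' = P \times_Y^{\mathsf{log}} Y'$, let $g'\colon P' \to Y'$ and $q\colon P' \to P$ be the two projections. Logarithmic smoothness is stable under fine and saturated base change, so $g'$ is logarithmically smooth; and since $X \to Y$ factors through $P$, transitivity of fiber products gives $X' = X \times_Y^{\mathsf{log}} Y' = X \times_P^{\mathsf{log}} P'$. Thus $f'$ factors as $X' \xrightarrow{i'} P' \xrightarrow{g'} Y'$ with $i'$ the base change of $i$ along $q$, and the whole lemma reduces to showing that $i'$ is a strict regular embedding.

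Strictness is immediate, as the base change of a strict morphism is strict; moreover, because $i$ is strict, the fine and saturated fiber product $X' = X \times_P^{\mathsf{log}} P'$ has underlying scheme the ordinary fiber product $\underline X \times_{\underline P} \underline{P'}$ with log structure pulled back from $P'$ (no saturation is forced). The genuine content is therefore that $\underline{i'}\colon \underline X \times_{\underline P} \underline{P'} \hookrightarrow \underline{P'}$ is a regular embedding of schemes. This is the main obstacle: $\underline{P'} \to \underline P$ is the underlying map of a logarithmically étale morphism and is typically \emph{not} flat (a logarithmic blowup is the prototype), and regular embeddings are not preserved under arbitrary base change.

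The resolution is to relativize over the Artin fan. Because $g$ is logarithmically smooth and $Y$ is logarithmically flat over $k$, the scheme $P$ is logarithmically flat, so $\alpha_P\colon \underline P \to \mathsf{A}_P$ is flat; since $i$ is strict, $\alpha_P\circ i\colon \underline X \to \mathsf{A}_P$ is the Artin fan morphism of $X$ and is flat as well, by logarithmic flatness of $X$. The morphisms $P' \to P$ and $X' \to X$ are logarithmically étale, hence — the assertion being étale-local on $Y$, where $\mathsf{A}_P$ is a quotient of a toric variety — are pulled back from a logarithmically étale morphism of Artin fans $\mathsf{A}_{P'} \to \mathsf{A}_P$; that is, $\underline{P'} = \underline P \times_{\mathsf{A}_P} \mathsf{A}_{P'}$ and $\underline{X'} = \underline X \times_{\mathsf{A}_P} \mathsf{A}_{P'}$, with $\underline{i'}$ the base change of $\underline i$ along $\mathsf{A}_{P'} \to \mathsf{A}_P$. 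I would then finish with a direct Koszul computation: writing $\mathcal{O}_{P'} = \mathcal{O}_P \otimes_{\mathcal{O}_{\mathsf{A}_P}} \mathcal{O}_{\mathsf{A}_{P'}}$ and using flatness of $\mathcal{O}_{\mathsf{A}_P} \to \mathcal{O}_P$, one gets $\operatorname{Tor}_i^{\mathcal{O}_P}(\mathcal{O}_X, \mathcal{O}_{P'}) \cong \operatorname{Tor}_i^{\mathcal{O}_{\mathsf{A}_P}}(\mathcal{O}_X, \mathcal{O}_{\mathsf{A}_{P'}})$, which vanishes for $i>0$ because $\mathcal{O}_X$ is flat over $\mathcal{O}_{\mathsf{A}_P}$. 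Hence pulling back the Koszul resolution of $\mathcal{O}_X$ along $q$ keeps it exact: the local equations of $X$ pull back to a regular sequence on $\underline{P'}$, so $\underline{i'}$ is a regular embedding. This exhibits the required factorization $X' \xrightarrow{i'} P' \xrightarrow{g'} Y'$ and proves $f'$ is log lci. The crux is precisely the failure of regular embeddings to base change freely; what rescues it is that $X$ and $P$ are both flat over the common Artin fan $\mathsf{A}_P$ and that the logarithmically étale $a$ is combinatorial, i.e. pulled back from $\mathsf{A}_P$.
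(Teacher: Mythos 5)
Your proof is correct, but the mechanism you use for the key step is genuinely different from the paper's. Both arguments begin identically: base change the factorization $X \to P \to Y$, observe that logarithmic smoothness is stable under fs base change, and reduce to showing that the strict regular embedding $i$ stays regular after the (non-flat) base change $P' \to P$. At that point the paper argues via normal cones: $C_{X/Y}$ is a vector bundle, logarithmic flatness of $X$ makes $X' \to X$ birational, and the closed embedding $C_{X'/Y'} \hookrightarrow a^{\star}C_{X/Y}$ is then closed and birational, hence an isomorphism, so the normal cone of $i'$ is again a vector bundle. You instead exploit logarithmic flatness of $X$ to get flatness of $X \to \mathsf{A}_P$, deduce Tor-independence of $X$ and $P'$ over $P$ by factoring the Tor through the Artin fan, and conclude via exactness of the pulled-back Koszul complex. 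Your route makes completely explicit \emph{where} log flatness enters (it is exactly what kills the higher Tor's), and it avoids the dimension/birationality bookkeeping for cones; the paper's route is shorter and avoids any Koszul-regular versus regular discussion. Two small points deserve a sentence of care in your write-up: (i) $X \to \mathsf{A}_P$ is not literally the Artin fan morphism of $X$, but it factors as $X \to \mathsf{A}_X \to \mathsf{A}_P$ with the second map strict (on the image of the surjection $X \to \mathsf{A}_X$) and logarithmically \'etale, hence \'etale, so flatness does hold; (ii) a general logarithmically \'etale $a$ is not pulled back from a morphism of Artin fans on the nose --- \'etale-locally it is a strict \'etale morphism composed with one that is so pulled back --- but the strict \'etale factor is flat and therefore harmless for the Tor computation, so your argument goes through. (The paper's own appeal to ``birational'' has the same blind spot for the strict \'etale factor, so this is not a defect relative to the source.)
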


\begin{proof}
Factor $f$ as $X \to P \to Y$ into a logarithmically smooth map followed by a regular closed immersion. Since logarithmically smooth morphisms pull back under arbitrary maps, it suffices to prove the statement for $f:X \to Y$ a strict regular immersion. Then, the log fiber product $X'$ coincides with the ordinary fiber product. Since $X \to Y$ is a regular immersion, its normal cone $C_{X/Y}$ is a vector bundle. As $X$ has been assumed to be logarithmically flat, the map $X' \to X$ is birational. Thus, the inclusion of the subscheme $C_{X'/Y'}$ to $a^\star C_{X/Y}$ is closed and birational, thus an isomorphism. Therefore $C_{X'/Y'}$ is a strict regular immersion, and thus $X' \to Y'$ is log lci. 
\end{proof}

We come to the main goal. Starting with a log lci map $f:X \to Y$ between logarithmically flat schemes, we construct a \emph{refined logarithmic pullback}, associating to each map $\phi: V \to Y$ from a logarithmically flat scheme $V$ to $Y$ a class in $\mathsf{CH}_\star(X\times_Y^{\mathsf{log}}V)$. We first discuss the ordinary pullbacks for $f$. 

From the previous subsection, there is a pullback $\mathsf{CH}_\star(Y) \to \mathsf{CH}_\star(P)$. Composing with the pullback $i^{!}: \mathsf{CH}_\star(P) \to \mathsf{CH}_\star(X)$, we get a pullback $\mathsf{CH}_\star(Y) \to \mathsf{CH}_\star(X)$. We do not know when this pullback is independent of the choice of factorization $f = g \circ i$. However, if we impose additional constraints, the pullback is independent of those choices.

\begin{lemma}
\label{lem: Gysinpullback}
Suppose $f:X \to Y$ is a log lci morphism. Assume that either 
\begin{enumerate}[(i)]
    \item $X,Y$ are logarithmically flat and tropically smooth. 
    \item $Y$ is logarithmically smooth and tropically smooth, i.e. smooth. 
    \item $f$ can be factored as $i: X \to P$, $g:P \to Y$ with $g$ logarithmically smooth and weakly semistable. 
\end{enumerate}
Then the pullback is independent of the choices made. 
\end{lemma}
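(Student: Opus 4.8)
The plan is to prove that any two factorizations of $f$ yield the same operator by comparing both to a common refinement, reducing the whole question to functoriality properties of the two kinds of Gysin maps in play. Write the pullback attached to a factorization $X \xrightarrow{i} P \xrightarrow{g} Y$ as the composite $i^! g^!$, where $g^!$ is the logarithmically smooth pullback constructed in the previous subsection and $i^!$ is Fulton's Gysin map of the strict regular embedding $i$. The formal skeleton is standard Fulton-style bookkeeping; the content lies entirely in the behaviour of $g^!$ in the Artin fan framework.

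First I would take two factorizations $f = g_1 \circ i_1 = g_2 \circ i_2$ with $i_k : X \to P_k$ strict regular embeddings and $g_k : P_k \to Y$ logarithmically smooth, and form the fs fiber product $P_3 := P_1 \times_Y^{\mathsf{log}} P_2$, with projection $\pi := \mathrm{pr}_1 : P_3 \to P_1$, structure map $g_3 := g_1 \circ \pi : P_3 \to Y$, and the map $i_3 : X \to P_3$ induced by $i_1, i_2$. Then $\pi$ is logarithmically smooth, being the fs base change of $g_2$, so $g_3$ is logarithmically smooth; and $i_3$ is a strict regular embedding, since it factors as $i_3 = \iota \circ \gamma$, where $\iota := i_1 \times \mathrm{id} : X \times_Y^{\mathsf{log}} P_2 \to P_3$ is the fs base change of $i_1$ along $\pi$, and $\gamma : X \to X \times_Y^{\mathsf{log}} P_2$ is the graph of $i_2$, a strict regular embedding because it is a section of the logarithmically smooth projection $q : X \times_Y^{\mathsf{log}} P_2 \to X$. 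By symmetry it then suffices to prove $i_1^! g_1^! = i_3^! g_3^!$.

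The computation runs as follows. Functoriality of the logarithmically smooth pullback under composition gives $g_3^! = \pi^! g_1^!$, so it is enough to check the identity $i_1^! = i_3^!\, \pi^!$ of operators on $\mathsf{CH}_\star(P_1)$. The pair $(\iota, q)$ is the fs base change of $(i_1, \pi)$ along the cartesian square over $i_1 : X \to P_1$, and the compatibility of the strict regular embedding Gysin map with the logarithmically smooth pullback gives $\iota^! \pi^! = q^! i_1^!$. Since $q \circ \gamma = \mathrm{id}_X$, functoriality of Gysin maps collapses $\gamma^! q^! = (q \gamma)^! = \mathrm{id}$, whence $i_3^! \pi^! = \gamma^! \iota^! \pi^! = \gamma^! q^! i_1^! = i_1^!$. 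Running the same comparison through $P_2$ yields $i_2^! g_2^! = i_3^! g_3^! = i_1^! g_1^!$, which is the assertion.

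The hard part will be establishing the two compatibility statements for the logarithmically smooth pullback, rather than the formal manipulation above: namely (a) functoriality $g_3^! = \pi^! g_1^!$ under composition of logarithmically smooth maps, and (b) the base change identity $\iota^! \pi^! = q^! i_1^!$ between the ordinary Gysin map and the logarithmically smooth pullback. Because the squares relating a logarithmic scheme to its Artin fan are \emph{not} cartesian, I expect both to reduce, through the strictification maps $h$ of the previous subsection, to the corresponding functoriality and flat-base-change properties of lci pullbacks on the (smooth) Artin fans, together with the compatibility of these strictifications with the fs fiber products used to build $P_3$. One must also verify, in each of the three regimes (i)--(iii), that $P_3$, $X \times_Y^{\mathsf{log}} P_2$, and the maps among them remain in the class for which those pullbacks are defined --- logarithmically flat and tropically smooth in (i), smooth target in (ii), and admitting the weakly semistable factorization in (iii) --- possibly after a logarithmic blowup to restore tropical smoothness of the refinement; this is precisely where the hypotheses on $X$ and $Y$ are used.
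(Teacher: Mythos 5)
Your approach is genuinely different from the paper's, and it has gaps that are not merely deferred technicalities. The paper does not run a double-factorization argument in the logarithmic category at all. Instead, it shows that under each of the hypotheses (i)--(iii) the Artin-fan pullback $g^!$ collapses to an \emph{ordinary} flat or lci pullback of schemes --- in case (i) by shrinking $P'$ to a tropically smooth open $P$ so that $f=g\circ i$ is an ordinary local complete intersection, in case (ii) by using smoothness of $Y$, and in case (iii) by Case IV (weak semistability forces $\mathsf{A}_f$ to be flat). Once that is done, independence of factorization is exactly Fulton's statement that an lci morphism carries a canonical orientation class, $f^!=i^!\circ g^!$ in $\mathsf{CH}^\star_{\mathsf{op}}(X\to Y)$ \cite[Example~18.3.17]{Ful84}. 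In your write-up the hypotheses (i)--(iii) appear only as bookkeeping conditions to keep the auxiliary spaces ``in the class''; in fact they are the entire mechanism of the proof, and without the collapse to ordinary pullbacks there is no prior result to invoke for uniqueness.

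Two concrete problems with your refinement construction. First, the claim that the graph $\gamma: X\to X\times_Y^{\mathsf{log}}P_2$ is a strict regular embedding ``because it is a section of the logarithmically smooth projection $q$'' is false in general: $q$ is the fs base change of $g_2$, and logarithmically smooth morphisms can have nodal, non-reduced, or non-equidimensional underlying fibers (the paper warns of exactly this), so a section of $q$ need not be a regular embedding of the underlying schemes. The analogous classical statement uses smoothness of the underlying scheme morphism, which you do not have. Likewise $\iota=i_1\times\mathrm{id}$ is an fs base change of a strict regular embedding along a map that need not be flat, so regularity of $\iota$ also requires an argument (compare the birationality argument in Lemma~\ref{lem: basechangeoflci}, which is stated only for logarithmically \'etale base change between logarithmically flat schemes). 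Second, the two compatibilities you defer --- functoriality $g_3^!=\pi^!g_1^!$ and the exchange $\iota^!\pi^!=q^!i_1^!$ --- are the actual content in this framework: the squares relating a log scheme to its relative Artin fan are not Cartesian, the fs fiber product differs from the scheme-theoretic one, and the various pullbacks are only defined after hypotheses such as tropical smoothness, which $P_3$ need not satisfy. Restoring it by a logarithmic blowup changes the spaces and reintroduces the strict-versus-total transform discrepancy that the rest of the paper is devoted to controlling. As written, the proposal does not close.
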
   

\begin{proof}
We argue that when the hypotheses are met, the morphism $g: P \to Y$ is more well behaved than an arbitrary logarithmically smooth map\footnote{The reader is warned that logarithmically smooth maps can have non-equidumensional and non-reduced fibers.} $P \to Y$; if (i) is satisfied, factor $f$ as $i:X \to P',g': P \to Y$ with $g'$ log smooth. Removing all strata of $P'$ which are disjoint from the image of $X$ produces an open $P \subset P'$ with a factorization $i:X \to P, g:P \to Y$ with $P$ tropically smooth as well, and thus $f = g \circ i$ is then an ordinary local complete intersection by Cases I and III of the analysis of the previous subsection; if (ii) is satisfied, it is a map to a smooth space and possesses a pullback associated to the diagonal of this space; if (iii) is satisfied the map $g$ is flat, by Case IV of the previous subsection. Furthermore, the pullback $\mathsf{CH}_\star(Y) \to \mathsf{CH}_\star(P)$ defined via the relative Artin fan is simply the Gysin pullback or flat pullback $g^{!}$ \footnote{We have used the notation $g^!$ here for both flat and local complete intersection pullback.}.  In each of these cases, by \cite[Example~18.3.17]{Ful84}, $f$ defines a bivariant class $f^{!}$ in $\mathsf{CH}_{\mathsf{op}}^\star(X \to Y)$, and we have $f^{!} = i^! \circ g^!$. Thus, as long as one of the hypotheses are satisfied, the pullback depends only on $f$ and not the particular choice of factorization meeting these hypotheses. 
\end{proof}

We may unambiguously denote the pullback in those cases by $f^!$. We want to emphasize here that we \underline{have not} defined a Gysin pullback for the underlying map of schemes of a log lci morphism that in any way generalizes the operations in~\cite{Ful84}. We have only defined a pullback when $f: X \to Y$ can be factored as $X \to P \to Y$ of a very particular form, precisely because the constructions of \cite{Ful84} then apply. An arbitrary log lci map does not necessarily have a factorization $X \to P \to Y$ satisfying the additional assumptions of Lemma~\ref{lem: Gysinpullback}. In order to achieve such a factorization, the map $f$ may have to be modified.

\begin{definition}
Let $f: X \rightarrow Y$ be a map of logarithmic schemes. A \textit{logarithmic modification} of $f$ is a map $f':X' \rightarrow Y'$ where $Y'$ is a logarithmic modification of $Y$ and $X'$ a logarithmic modification of $X$, and $f'$ factors through the fine and saturated logarithmic base change of $f$ along $Y'\to Y$.
\end{definition}

Suppose now that $f:X \to Y$ is log lci, $X,Y$ logarithmically flat. We factor $f$ as $i: X \to P$, $g: P \to Y$ with $i$ a strict regular embedding, $g$ logarithmically smooth. We can perform strong semistable reduction, and there are two ways to proceed. We may apply difficult but complete semistable reduction in~\cite{ALT18}, or use non-representable morphisms. In the latter case, we first perform universal \textit{weak} semistable reduction~\cite{Mol16}, pass to barycentric subdivisions, then root, to obtain a non-representable semistable morphism of Deligne--Mumford stacks. 

In any event, we obtain a modification $g': P' \to Y'$ with $g'$ logarithmically smooth, weakly semistable, and $P',Y'$ tropically smooth. Pull back $X$ via $P' \to P$ to get a strict regular embedding 
\[
X' = X \times_P P' = X \times^\mathsf{log}_P P' \to P',
\]
by Lemma \ref{lem: basechangeoflci}. For a log lci map $f: X \to Y$, we can find a modification $f': X' \to Y'$ which is log lci and semistable, i.e. log lci, weakly semistable, and with $X',Y'$ tropically smooth. Furthermore, $Y'$ can be chosen to refine any particular log modification $\tilde{Y} \to Y$: simply, the pullback $\tilde{f}: \tilde{Y} \times^\mathsf{log}_Y X \to \tilde{Y}$ remains log lci, and performing this construction to $\tilde{f}$ produces such a $Y'$. In total, starting from 
\begin{enumerate}[(i)]
    \item A log lci map $f: X \to Y$ between logarithmically flat log schemes. 
    \item A log map $\phi: V \to Y$ from a logarithmically flat scheme $V$. 
\end{enumerate}
we find a modification $f': X' \to Y'$ of $f$ with $f'$ log lci and semistable, and for which the map $\phi': V' = V \times^\mathsf{log}_Y Y' \to Y'$ is weakly semistable. 

We use these observations to construct the logarithmic Gysin map. 

\begin{definition}[Logarithmic intersection]
Let $f:X \rightarrow Y$ be a logarithmic local complete intersection morphism, and $\phi: V \rightarrow Y$ be a logarithmic morphism. Assume $X,Y,V$ are logarithmically flat over the base field. Let 
\[
\begin{tikzcd}
X' \ar[r,"f'"] \ar[d,"p"] & Y' \ar[d,"q"] \\ X \ar[r,"f"] & Y 
\end{tikzcd}
\]
be a log modification of $f$ with $X',Y'$ tropically smooth, and $f'$, and $\phi':V' \to Y'$ weakly semistable, where $V' = V \times^\mathsf{log}_Y Y'$ is the fiber product in the category of fine logarithmic schemes. Define 
\[
f_{\mathsf{log}}^{!}(V)  =(p)_\star(f')^{!}(V') \in \mathsf{CH}_\star(V \times^{\mathsf{log}}_{Y} X)
\]
\end{definition}

We prove that the cycle is well-defined. 

\begin{theorem}
The cycle is obtained by logarithmic refined intersection is independent of choices. 
\end{theorem}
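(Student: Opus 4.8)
The plan is to deduce independence from a domination argument combined with the bivariant formalism, rather than by unwinding the Gysin construction directly. First I would observe that the valid modifications used in the definition form a filtered system: given two choices $f_1':X_1'\to Y_1'$ and $f_2':X_2'\to Y_2'$, the log modifications $Y_1',Y_2'$ of $Y$ are both dominated by a common log modification $\widetilde Y\to Y$, and applying the semistable reduction construction preceding the definition to $\widetilde Y$ produces a \emph{single} valid modification $f_3':X_3'\to Y_3'$ with $Y_3'\to Y$ refining both $Y_1'$ and $Y_2'$. It therefore suffices to compare a valid modification $f':X'\to Y'$ with a further valid modification $f'':X''\to Y''$ in which $q:Y''\to Y'$ is itself a log modification. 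Transitivity of the logarithmic fiber product gives $X''=X'\times^{\mathsf{log}}_{Y'}Y''$ and $V''=V'\times^{\mathsf{log}}_{Y'}Y''$, so $f''$ is the fine and saturated log base change of $f'$ along $q$.

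Next I would package the two pullbacks as bivariant classes. Since $f'$ and $f''$ are both semistable, Lemma~\ref{lem: Gysinpullback} (case (iii)) shows that each factors as $i^!\circ g^!$ and defines a bivariant class $(f')^!\in\mathsf{CH}_{\mathsf{op}}^\star(X'\to Y')$ and $(f'')^!\in\mathsf{CH}_{\mathsf{op}}^\star(X''\to Y'')$, the logarithmically smooth weakly semistable factor contributing a flat pullback and the strict regular embedding an ordinary refined Gysin map. The key structural input I would isolate is the base change identity $(f'')^!=q^\star (f')^!$: this should hold because $f''$ is the log base change of $f'$ (Lemma~\ref{lem: basechangeoflci}), and both the flat pullback $g^!$ and the refined Gysin map $i^!$ are compatible with base change in Fulton's sense~\cite{Ful84}.

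With this in hand the comparison becomes a formal consequence of the bivariant axioms. Let $s:V''\to V'$ be the induced map; because $V$ is logarithmically flat and $q$ is a log modification, $s$ is proper and birational, so $s_\star[V'']=[V']$. Writing $r$ for the induced proper map between the fiber products $X''\times^{\mathsf{log}}_{Y''}V''$ and $X'\times^{\mathsf{log}}_{Y'}V'$, compatibility of bivariant classes with proper pushforward~\cite[Chapter~17]{Ful84} gives
\[
(f')^!\bigl(s_\star[V'']\bigr)=r_\star\bigl((f')^![V'']\bigr).
\]
The left-hand side equals $(f')^![V']$, while on the right $(f')^![V'']=q^\star(f')^!([V''])=(f'')^![V'']$ by the base change identity of the previous step. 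Pushing both sides forward to $\mathsf{CH}_\star(V\times^{\mathsf{log}}_Y X)$ along the proper maps in the definition then yields $p'_\star(f')^![V']=p''_\star(f'')^![V'']$, which is the asserted independence.

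The hard part will be the base change identity $(f'')^!=q^\star(f')^!$. The Gysin maps are computed through the relative Artin fan diagram, so I must verify that the fine and saturated log fiber products defining $f''$, $V''$, and $X''\times^{\mathsf{log}}_{Y''}V''$ agree scheme-theoretically with the ordinary fiber products entering the bivariant calculation; this is exactly where logarithmic flatness and the birationality of the base change from Lemma~\ref{lem: basechangeoflci} are used. I must also confirm that the semistability hypotheses persist for every modification in the comparison so that Lemma~\ref{lem: Gysinpullback} applies throughout, which is precisely why it is advantageous to route the comparison through a single valid refinement $Y_3'$ rather than through an arbitrary zig-zag of smooth blowups produced by weak factorization~\cite{AKMW}.
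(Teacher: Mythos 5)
Your proposal is correct in outline and rests on the same two geometric inputs as the paper's proof --- birational invariance of classes of logarithmically flat schemes under logarithmic modification, and compatibility of Gysin maps in Fulton's bivariant formalism --- but the formal chain is arranged differently. The paper compares a valid choice $f$ directly against a refinement $f'$ and computes
\[
p_\star (f')^![V'] \;=\; p_\star (f')^! q^! [V] \;=\; p_\star p^! f^![V] \;=\; f^![V],
\]
so its key lemma is $q^![V]=[V']$, established by a support-and-dimension count ($q^![V]$ has dimension $\dim V'$ and is supported on $V'$, and $V'\to V$ is birational on components), followed by the commutation $(f')^!q^! = p^!f^!$ and $p_\star p^! = \mathrm{id}$ for the proper birational lci map $p$. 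You instead push the top cycle down, $s_\star[V'']=[V']$, and invoke compatibility of $(f')^!$ with proper pushforward together with the base change identity $(f'')^! = q^\star (f')^!$. The two routes trade one standard compatibility from \cite{Ful84} for another: your base change identity follows from \cite[Theorem~6.2(c)]{Ful84} applied to the strict regular embedding factor --- legitimate because Lemma~\ref{lem: basechangeoflci} shows the base-changed embedding is again strict regular of the same codimension --- plus flat base change on the weakly semistable factor, whereas the paper's commutation is the analogous commutativity statement for a fiber square. What your version buys is that the only cycle-level input is the trivial pushforward identity $s_\star[V'']=[V']$; what the paper's version buys is that it never has to compare the supports $X'\times_{Y'}V''$ and $X''\times_{Y''}V''$, which is precisely the point you correctly flag as the delicate one (and which does go through, since weak semistability makes the fine and saturated fiber products agree with the schematic ones). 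Your explicit reduction to a common refinement via the filtered system is also spelled out more carefully than in the paper, where it is left implicit in the remark that $Y'$ may be chosen to refine any given modification.
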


\begin{proof}
We show that if $f: X \to Y$ is log lci and semistable, $\phi: V \to Y$ weakly semistable, and 
\[
\begin{tikzcd}
X'' \ar[r,"f''"] \ar[d,"p"] & Y'' \ar[d,"q"] \\ X \ar[r,"f"] & Y 
\end{tikzcd}
\]
a second diagram, as in the paragraph before the theorem, with $f''$, $\phi''$ having the same properties, then $(f)^{!}(V)  =(p)_\star(f'')^{!}(V')$. Since $\phi$ is weakly semistable, $V''= V \times_Y Y''$ is Caresian in the category of schemes as well, and since $V$ is logarithmically flat, we can replace $V$ with the closure of its interior without changing it class. The map $V'' \to V$ is birational on every irreducible component. As $X,Y,X'',Y''$ are tropically smooth, the maps $p,q$ are lci, and we have $q^{!}(V) = V''$, as $q^{!}(V)$ is a $\dim V''$ dimensional class supported on $V''$. On the other hand, we have $f''^{!}q^{!}(V) = p^{!}f^{!}(V)$. Therefore, $p_{\star}f''^{!}(V') = p_{\star}p^{!}f^{!}(V)$. As $p: X'' \to X$ is proper and birational, we have $p_{\star}p^{!} = \mathsf{id}$, and so the result follows.   
\end{proof}

We end the section with a series on remarks on practicalities of this definition. 

\begin{remark}[Complexity of the operation]
The most significant combinatorial complexity comes from finding an appropriate logarithmic modification. This is a problem in polyhedral geometry. Once done, the obstruction theory on $f'$ can be broken into a regular embedding and a weakly semistable morphism, which can be understood separately. 
\end{remark}

\begin{remark}[Complexity of semistabilization]
In practice, semistabilizing the map $X \to Y$ can be difficult. Even when the simpler non-representable strong semistabilization is used, the barycentric subdivision that is used in the construction adds enormous combinatorial complexity to the problem. When $X \to Y$ is logarithmically a monomorphism, and so $\Sigma_X \to \Sigma_Y$ is injective on cones, semistabilizing $X \to Y$ is equivalent to weakly semistabilizing it, which is a much simpler process.
\end{remark}

\begin{remark}[Key example]
The most important logarithmic Gysin map for us is when the map $X \to Y$ is a logarithmic modification, with $X,Y$ smooth and logarithmically smooth. Spelling out the construction given $f:X \to Y$: form the diagram 

\[
\begin{tikzcd}
X \ar[r] \ar[d] & X \ar[d] \\ X \ar[r] & Y
\end{tikzcd}
\]
In this case, $f_{\mathsf{log}}^{!}(V) = X \times^\mathsf{log}_Y V$ is {the class of the strict transform of $V$}. The construction depends on the choice of map $\phi:V \to Y$, and does not pass to rational equivalence. The construction is expected to pass to any reasonable notion of strict rational equivalence, which should include tropical homotopies, but we have not pursued this.  
\end{remark}

\subsection{Calculation strategy}\label{sec: calculation-strategy}
The following situation is common, and paradigmatic\footnote{This subsection was written in response to a suggestion of David Holmes to emphasize the importance of the refined version of Fulton's Gysin pullback formalism. We take the opportunity to thank him for these comments.}. Consider 
\[
\begin{tikzcd}
  & \mathcal{Z} \ar[d] \\ X \ar[r] & Y
\end{tikzcd}
\]
\noindent where $\mathcal{Z}$ and $X$ are logarithmic modifications of a logarithmically and tropically smooth $Y$. The three spaces are birational, but as we exhibit, there is a subtle \textit{refined} class on the fiber product.

We form the fiber product either to the diagram as given, or after weakly semistabilizing the maps as above. Up to further birational modifications, the latter coincides with the fine and saturated logarithmic fiber product $\mathcal{Z} \times_Y^{\mathsf{log}} X$. This latter fiber product birational to $X$, but it may only be one component in the schematic pullback $\mathcal{Z} \times_Y X$; the ordinary pullback posseses \textit{additional components} of expected or even excess dimension. By Fulton's theory, these excess components support a class in expected dimension, namely the dimension of any of $X$, $Y$, or $\mathcal Z$. 

The refined pullback of the fundamental class $[\mathcal Z]$ is a class on $\mathcal{Z} \times_Y X$. It \textit{differs} from the logarithmic pullback, which the class of $\mathcal{Z} \times_Y^{\mathsf{log}} X$. These spurious classes are supported on the excess components. The following examples illustrate these phenomena.

\begin{example}
Let $Y$ be $\A^2$ and $X$ be the blowup at the origin with exceptional divisor $E$. Let $Z$ be the open subset of $X$ obtained by removing the two torus fixed points in $E$; it is abstractly isomorphic to $\A^1\times \mathbb G_m$. Everything is endowed with its toric logarithmic structure. The logarithmic fiber product is a toric variety, and in particular irreducible. The schematic fiber product contains a copy of $\mathbb P^1\times \mathbb G_m$ that maps to the origin in $Y$. Since everything has the expected dimension, the spurious component contributes positively to the intersection product~\cite[Chapter~7]{Ful84}. 

Components can have excess dimension: if $Y$ is $\A^r$, with $X$ its blowup at the origin, and $\mathcal Z$ the inclusion of $\A^1\times \mathbb G_m^{r-1}$, then the schematic pullback has a component of dimension $2r-2$.
\end{example}

The passage to the ``real world'' situation in the next section requires replacing $\mathcal Z\to Y$ with a proper morphism. In favourable situations, we are routinely led to consider diagrams of the form  
\[
\begin{tikzcd}
X \times_Y^{\mathsf{log}} Z \ar[r] \ar[d,"q"] & X \times_Y Z \ar[r] \ar[d,"r"] & Z \ar[d,"p"] \\ \ar[r] X \times_Y^{\mathsf{log}} \mathcal{Z} & X \times_Y \mathcal{Z} \ar[r] \ar[d] & \mathcal{Z} \ar[d] \\& X \ar[r,"f"] & Y
\end{tikzcd}
\]
with the upper squares Cartesian, $p$, $q$ and $r$, carrying compatible obstruction theories. We are interested in the pushforward to $X$ of the difference
$$
\boxed{r^!f^!(\mathcal{Z}) - q^!f_{\mathsf{log}}^!(\mathcal{Z})} 
$$
which amounts to applying the virtual intersection formalism to piecewise polynomials on the excess components of $X \times_Y \mathcal{Z}$. {While the treatment of the class $f^!(\mathcal{Z}) - f_{\mathsf{log}}^!(\mathcal{Z})$ is totally uniform and always produces a piecewise polynomial class, the interaction of this with the virtual class is more delicate, and depends on the obstruction theory. In the case of the double ramification cycle, this interaction is dealt with by means of the gluing formula.}

\noindent 

\section{Toric contact cycles}

The techniques will now be used study \textit{toric contact cycles}, and the remainder of this paper sets up and executes a calculation scheme for these cycles. 

\subsection{The plan} The toric contact cycles, recalled below, parameterize curves that admit a prescribed map to a fixed toric variety. In $\cM_{g,n}$ these are intersection products of double ramification cycles. The first steps in this section describe how this product relation extends to sufficiently refined blowups of $\Mbar_{g,n}$, see Section~\ref{sec: product-rule}. On any blowup, there is a lifted double ramification cycle that pushes down to the ordinary double ramification cycle on $\Mbar_{g,n}$, explained in Section~\ref{sec: lightning-class}. The products of the lifted classes recover toric contact cycles. The lifted cycle is identified with a {virtual strict transform} of the double ramification cycle.\footnote{The construction of a lifted class is not original to this paper. It was first considered by Holmes, Pixton, and Schmitt~\cite{HPS19}. Our perspective builds on its Gromov--Witten theory incarnation~\cite{R19,R19b}. The contribution here is its understanding as a virtual strict transform.} We explain the idea behind this concept. 

There is a moduli space $\DR(\sV)$ that is equipped with a morphism
\[
\DR(\sV)\to \Mbar_{g,n},
\]
which carries a virtual class in homological degree $2g-3+n$. Its pushforward to $\Mbar_{g,n}$ is well-understood~\cite{FP,JPPZ}. The morphism may be factored as:
\[
\DR(\sV)\to t\DR(\sV)\to \Mbar_{g,n}
\]
where the second morphism is an ``open'' logarithmic modification, i.e. an open in a logarithmic modification, and the first arrow is strict and virtually smooth of expected codimension $g$. The first arrow contains the virtual structure but it is strict, and so has no tropical complexity. The second arrow is far from strict, but is birational and has no virtual complexity. 

We blowup $\Mbar_{g,n}$ and apply Fulton's \underline{refined} blowup formula~\cite[Example~6.7.1]{Ful84} to the fundamental class of $t\DR(\sV)$. As in the examples of Section~\ref{sec: calculation-strategy}, we obtain a class measuring the difference between the strict and total transform of $[t\DR(\sV)]$. Virtual pullback along the first arrow in the factorization produces an expression for the virtual pullback of the strict transform, i.e. the virtual strict transform. This is the heart of the analysis, and takes place  starting in Section~\ref{sec: virtual-strict-transforms}.

\subsection{Nonsingular and compact type curves}\label{sec: nonsingular-curves} We begin in earnest. Let $\sV_1,\ldots, \sV_r$ each be vectors in $\ZZ^n$ whose components have vanishing sum, referred to as \textit{contact vectors}. Denote the ordered collection as $\underline \sV$. Each $\sV_i$ determines contact (or ramification) data for a map from $n$-pointed curves to $\mathbb P^1$: the $j^{\textrm{th}}$ entry of $\sV_i$ is the zero order of the map at the $j^{\textrm{th}}$ marking, where a negative zero order is interpreted as a pole order. The list $\underline \sV$ determines contact data for a map from an $n$-pointed curve to $(\mathbb P^1)^r$. 

 Let $\TC^\circ(\underline \sV)$ be stack of maps from smooth $n$-pointed curves of genus $g$ to the toric pair
\[
(C,p_1,\ldots, p_n)\to (\mathbb P^1)^r,
\]
with ramification specified by $\underline \sV$ and considered up to the torus action on the codomain. 

\begin{proposition}
The map $\TC^\circ(\underline \sV)\to\cM_{g,n}$ is a closed immersion.
\end{proposition}

\begin{proof}
On a fixed pointed curve $(C,p_1,\ldots, p_n)$, if such a map exists it is unique up to the rank $r$ torus scaling accounted for above. Indeed, it is equivalent to the condition that each of the Cartier divisors $\sV_i\cdot \underline p$ is trivial in the Picard group of $C$. If the $r$ line bundles are trivial, each has a unique section up to scaling, giving the morphism to $\mathbb P^1$ on each factor. 
\end{proof}

The construction therefore determines a cycle (and not merely a cycle class) in $\cM_{g,n}$ whose expected codimension is $rg$. There is a basic intersection relation:
\[
\TC^\circ(\underline \sV) = \bigcap_{i=1}^r \DR^\circ(\sV_i).
\]
This relation is visible already at the level of \textit{cycles}, rather than merely cycle classes. Note that the cycle class is $0$ in Chow homology, because the tautological ring is $0$ in codimension $g$. The product relation extends to the moduli space of curves of compact type, and passing to Chow homology, gives a basic product relation. The extension to $\Mbar_{g,n}$ is much more subtle, and is essentially our main topic in this part of the paper. We require tropical techniques to go further.

\subsection{A refresher on tropical moduli} Tropical geometry provides a route to a compactification of this locus. The discussion here is contained in~\cite{MW17} but we rework it into a setup that is more tailored to the double ramification cycle. 

The moduli space $\Mbar_{g,n}$ of stable genus $g$ curves with $n$ markings carries a logarithmic structure from the divisor of singular curves. It has an Artin fan $\mathsf A_{g,n}$ and a strict morphism
\[
\Mbar_{g,n}\to \mathsf A_{g,n}. 
\]
The Artin fan is the colimit over points $x$ in $\Mbar_{g,n}$ of stacks $\mathsf{A}_x = \textup{Spec}(k[P_x])/\textup{Spec}(k[P_x^\mathsf{gp}])$ where $P_x$ is the characteristic monoid at $x$. The colimit is taken in logarithmic algebraic stacks.

It is convenient to understand the stack $ \mathsf A_{g,n}$ in more combinatorial terms. A notion of \textit{stack over the category of cone complexes} has been introduced in~\cite{CCUW} leading to a categorical treatment of tropical moduli problems, and the moduli space of curves. 

To each $n$-pointed curve $(C,p_1,\ldots,p_n)$ one may associate  {\em weighted dual graph}: 
$$\mathsf{G} =\ (V, E, L, h),$$

\begin{enumerate}[(i)]
\item the vertex set $V$ is the set of components of $C$;
\item the edge set  $E$ is the set of nodes of $C$, where an edge $e\in E$ is incident to vertices $v_1,v_2$ if the corresponding node lies on both corresponding components;
\item the { ordered} set of { legs} of  $L$ labelled $\{1,\ldots, n\}$ with a marked leg incident to the vertex whose associated component contains the marking;
\item the function $h: V \to \ZZ_{\geq 0}$ is the genus function, where $h(v)$ is the geometric genus of the component corresponding to $v$.
\end{enumerate}

The \textit{genus} of a weighted dual graph is the sum of the values of the genus function at all vertices, plus the first Betti number of the geometric realization of $\mathsf G$. A \textit{tropical curve} is a pair $(\mathsf G,\ell)$ where $\mathsf G$ is a weighted dual graph as above and 
\[
\ell: E\to \RR_{>0}
\]
is a length function. The function $\ell$ enhances the topological realization of the graph to a metric space. We enhance further this by attaching, for each leg, a copy of $\RR_{\geq 0}$ to this metric where $0$ is identified with the vertex which supports the leg. 

If a weighted dual graph $\mathsf G$ is fixed, the interior of the cone $\sigma_{\mathsf G}:=\RR_{\geq 0}^E$ is the moduli space of all tropical curves equipped with an identification of the underlying weighted dual graph with $\mathsf G$. We refer to it as the \textit{moduli cone of $\mathsf G$}. A \textit{graph contraction} of a weighted dual graph is a sequence of edge contractions $\pi: \mathsf G\to \mathsf G'$ of the underlying graph with its canonically defined genus function given by assigning a vertex $v'$ the genus of $\pi^{-1}(v')$. An edge contraction $\mathsf G\to \mathsf G'$ induces a morphism
\[
\sigma_{\mathsf G'}\hookrightarrow \sigma_{\mathsf G}
\]
identifying the former with a face of the latter, obtained by setting the length function on coordinates corresponding to contracted edges to be zero. 

The moduli space of tropical curves may now be constructed. There are multiple approaches: via generalized cone complexes~\cite{ACP}, via cone stacks, and via combinatorial cone stacks~\cite{CCUW}. We opt for the third, following~\cite[Section~3.4]{CCUW} which is expanded on in~\cite[Section~3.2]{UZ19}. 

Let $I_{g,n}$ be the category whose objects are weighted dual graphs of stable curves of genus $g$ with $n$-marked points. The arrows are graph contractions that are compatible with all labels. Let $\mathbf{RPC}^f$ be the category of sharp rational polyhedral cones with face morphisms. The functor
\[
\sigma: I_{g,n}\to \mathbf{RPC}^f
\]
defines a category fibered in groupoids. This is explained in~\cite[Proposition~3.2]{UZ19} together with the discussion following Definition~3.1 in the same paper. The diagram of cones associated to the functor above precisely describes the moduli stack $\cM_{g,n}^{\trop}$. 

The $2$-categories of cone stacks and Artin fans are equivalent~\cite[Theorem~3]{CCUW}. The cone stack $\cM_{g,n}^{\trop}$ defined above defines an Artin fan which we denote $a^\star\cM_{g,n}^{\trop}$. There is a morphism
\[
\trop_{g,n}:\Mbar_{g,n} \to a^\star\cM_{g,n}^{\trop}
\]
that is logarithmically smooth, strict, and surjective. 

\subsection{A subdivision of the moduli space}\label{sec: tropical-DR} The Artin fan can be used to produced an open set in birational models of $\Mbar_{g,n}$. Fix a length $n$ integer vector $\sV_i$ whose components sum to $0$. Fix a weighted dual graph $\mathsf G$ and let $\sigma_{\mathsf G}$ denote its moduli cone. As described in the paragraphs above, if a point $p$ in $\sigma^\circ_{\mathsf G}$ is fixed, we obtain a metric graph structure enhancing the realization of $\mathsf G$ as a CW complex. We denote it by $\plC_p$. 

\begin{definition}
A \textit{balanced function on $\plC_p$ with slopes $\mathsf V_i$} is a continuous function $\plC\to \RR$ such that 
\begin{enumerate}[(i)]
\item the function restricted to any edge of $\plC_p$ it is linear with integer slope,
\item the sum of the outgoing slopes at every vertex of $\plC_p$ is zero, and
\item the slope along the leg labelled $j\in \{1,\ldots n\}$ is the index $j$ entry in the contact vector $\mathsf V_i$. 
\end{enumerate}
\end{definition}

\noindent
Balancing ensures that such a function is unique up to additive constant if it exists, but it may not. 

\begin{remark}
We follow the treatment of the double ramification cycle via logarithmic geometry and curves carrying piecewise linear functions on their tropicalizations found in~\cite{MW17}. In this paper, the balancing condition is not insisted upon. We insist on it here because it makes the cone complexes that we consider somewhat more concrete. Our interest here is only in the ``usual'' double ramification cycle rather than its pluricanonical variants, which allows us to fully classify the necessary piecewise linear functions from the outset. 
\end{remark}

\begin{lemma}
The closure in $\sigma_{\mathsf G}$ of the set of points $p$ in $\sigma^\circ_{\mathsf G}$ such that the tropical curve $\plC_p$ admits a balanced function with slopes given by the contact vector $\mathsf V_i$ is a subfan of $\sigma_{\mathsf G}$, i.e. a union of cones in a subdivision.
\end{lemma}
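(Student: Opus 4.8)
The plan is to translate the condition ``$\plC_p$ admits a balanced function with slopes $\mathsf{V}_i$'' into a purely \emph{linear} condition on the length coordinates $\ell\in\sigma_{\mathsf G}=\RR^E_{\geq 0}$, and then to show that only finitely many such linear conditions can occur. First I would record that a continuous, edgewise-linear function on $\plC_p$ is determined up to an additive constant by its differential: the assignment $s\colon \vec E\to\RR$ of oriented edge slopes, together with the fixed leg slopes read off from $\mathsf{V}_i$. Conditions (i)--(iii) say exactly that $s$ is an \emph{integral} flow (condition (i)) whose divergence at each vertex is the one prescribed by balancing (condition (ii)) and the leg normalisation (condition (iii)); these are conditions on $s$ alone and do not involve $p$. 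The only condition that involves $p$ is that such an $s$ integrate to a single-valued function on the metric graph, i.e. that the periods vanish:
\[
\sum_{e} s(e)\,\gamma(e)\,\ell(e) = 0 \qquad \text{for all } \gamma \in H_1(\mathsf G,\ZZ).
\]
Thus $\plC_p$ admits a balanced function with slopes $\mathsf{V}_i$ if and only if there exists an integral flow $s$ with the prescribed divergences whose $\ell$-periods all vanish.

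Next I would fix one such integral flow $s$ and analyse the locus it carves out. Since the period pairing above is linear in $\ell$ for fixed integral $s$, the set of $\ell$ with vanishing $s$-periods is the intersection of $\sigma_{\mathsf G}$ with a rational linear subspace $W_s\subseteq\RR^E$; write $\tau_s:=W_s\cap\sigma_{\mathsf G}$, a rational polyhedral subcone of $\sigma_{\mathsf G}$. By the previous paragraph, the locus of interior points appearing in the lemma is the union $\bigcup_s\bigl(\tau_s\cap\sigma^\circ_{\mathsf G}\bigr)$ taken over all integral flows $s$ with the prescribed divergences. A priori this is an \emph{infinite} union, since the flows with fixed divergence form a torsor under the integral cycle lattice $H_1(\mathsf G,\ZZ)$.

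The hard part, and the crux of the statement, is to cut this down to a finite union, and the key is a uniform bound on the slopes. If $p\in\sigma^\circ_{\mathsf G}$ lies in $\tau_s$ then all lengths are positive and $s$ is the differential of an actual balanced function $f$ on $\plC_p$; equivalently, $s$ is the harmonic current in the resistor network on $\mathsf{G}$ with edge resistances $\ell(e)$, external current injections the entries of $\mathsf{V}_i$, and potential $f$. For such a current flow I would run the standard cut estimate: orienting an edge $e=(v,w)$ so that its current is nonnegative and setting $A=\{u : f(u)\geq f(v)\}$, every edge of the boundary $\partial A$ carries current \emph{out} of $A$, and the total outgoing current equals the net injection into $A$, which is at most the total positive contact order $T:=\tfrac12\sum_j|\mathsf{V}_{i,j}|$. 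Hence $|s(e)|\leq T$ for every edge, uniformly in $\ell$. Consequently only the finitely many integral flows $s$ with $\|s\|_\infty\leq T$ can have $\tau_s\cap\sigma^\circ_{\mathsf G}\neq\varnothing$.

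Finally I would assemble the conclusion. With only finitely many relevant $s$, the interior locus is the finite union $\bigcup_s\bigl(\tau_s\cap\sigma^\circ_{\mathsf G}\bigr)$, and its closure in $\sigma_{\mathsf G}$ is the finite union $\bigcup_s\tau_s$ of rational polyhedral cones: it is closed because a finite union of closed cones is closed, and it contains each relevant $\tau_s$ because $\tau_s\cap\sigma^\circ_{\mathsf G}$, being a nonempty relatively open subset meeting the relative interior, is dense in $\tau_s$. To exhibit this union as a subfan of $\sigma_{\mathsf G}$, I would pass to the coarsest common refinement of $\{\tau_s\}$ together with the faces of $\sigma_{\mathsf G}$; the locus is then a union of cones of this subdivision, and it is closed under passing to faces, since a face of $\tau_s$ is obtained by contracting the edges whose length vanishes there, under which $s$ restricts to a flow on the contracted graph with the same prescribed divergences and still-vanishing periods. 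This displays the closure as a subfan, as claimed. The only genuinely delicate input is the uniform slope bound of the third paragraph; everything else is linear algebra over the cone together with bookkeeping under edge contraction.
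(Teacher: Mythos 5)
Your proposal is correct, but there is an asymmetry to flag: the paper does not actually prove this lemma --- its ``proof'' is the single line ``This is the main result in [UZ19, Section~4]'', deferring entirely to Ulirsch--Zakharov. What you have written is a genuine self-contained proof, and it follows what is in substance the standard argument for polyhedrality of tropical double ramification / principal divisor loci: (a) separate the data of a balanced function into a discrete part (an integral slope assignment $s$ with prescribed divergences) and a metric part (the period conditions $\sum_e s(e)\gamma(e)\ell(e)=0$, which are linear in $\ell$ for fixed $s$, so each $s$ contributes a rational polyhedral cone $\tau_s$); and (b) kill the a priori infinite union over the $H_1(\mathsf G,\ZZ)$-torsor of such $s$ by the cut/maximum-principle bound $|s(e)|\le \tfrac12\sum_j|\mathsf V_{i,j}|$, which I have checked: summing the balancing condition over $A=\{u: f(u)\ge f(v)\}$ makes the interior edges cancel, every boundary edge contributes with the same sign, and the total equals the leg injection into $A$. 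Your closure and face bookkeeping is also fine (convexity gives density of $\tau_s\cap\sigma^\circ_{\mathsf G}$ in $\tau_s$ whenever the former is nonempty, and restriction of $s$ to an edge contraction handles faces); passing to a common refinement to get an honest fan structure is the right move, since the $\tau_s$ need not meet along faces. Two benign points worth a sentence each in a final write-up: loop edges are forced to carry slope $0$ by their own period condition, and the ``torsor'' statement presupposes connectedness of $\mathsf G$ (automatic for dual graphs of stable curves) and the existence of at least one integral flow with the given divergences (standard, and irrelevant when the locus is empty).
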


\begin{proof}
This is the main result in~\cite[Section~4]{UZ19}. 
\end{proof}

For each cone $\sigma_{\mathsf G}$, the lemma above gives rise to a union of cones contained inside $\sigma_{\mathsf G}(\mathsf V_i)$. These cones are typically not full dimensional, and in particular, the subfan is not complete. One can see this by elementary geometry. First note that the coordinates in $\sigma_{\mathsf G}$ are the edge lengths, which we can change freely. But, if we are given a map from $\plC_p$ to $\mathbb R$, where the underlying graph is $\mathsf G$, there are constraints on moving the edge lengths while keeping the map to $\mathbb R$. Indeed, each cycle imposes one condition on the edge lengths. Therefore, one expects, and it is often the case in practice, that the subfan in the lemma has codimension equal to the number of cycles. 

The intersection of this conical subfan with a face of $\sigma_{\mathsf G}$ corresponding to a graph contraction $\mathsf G\to \mathsf G'$ is the set of tropical curves with underlying weighted graph $\mathsf G'$ that admit a balanced function with slopes $\sV_i$. It follows from the work of Ulirsch and Zakharov~\cite[Theorem~B]{UZ19} that the resulting cones glue to form a substack and is equipped with a map to the moduli stack of tropical curves 
\[
\DR(\sV_i)_{\#}^{\trop}\to \cM_{g,n}^{\trop}. 
\]
The domain is a moduli space for tropical curves with a balanced function with slopes $\sV_i$. We use $\#$ to indicate that we will momentarily change the conical and integral structure on this substack. 

\begin{remark}
Ulirsch and Zakharov construct and study the space above, as the \textit{space of principal divisors}, as a generalized cone complex. The generalized cone complex comes with a presentation as a colimit of a diagram of cones and face morphisms parameterized by an index category; the functor defines a category fibered in groupoids and gives rise to a cone stack~\cite[Section~3.2]{UZ19}. The relation between generalized cone complexes and cone stacks is handled in~\cite[Section~1.3]{U21}. 
\end{remark}

We refine the cone structure on $\DR(\sV_i)_{\#}^{\trop}$ to guarantee non-singularity of its Artin fan. In above cone structure, it may occur that, in the interior of a cone $\sigma$ in $\DR(\sV_i)_{\#}^{\trop}$ where the weighted graph is constant, the difference in the positions of two vertices may change from positive to negative depending on the chosen point in $\sigma$. We replace the existing polyhedral structure on $\DR(\sV_i)_{\#}^{\trop}$ to a finer one, avoiding this phenomenon. 

By~\cite[Section~5.5]{MW17}, there is a subdivision of $\DR(\sV_i)_{\#}^{\trop}$, possibly including passage to a sublattice\footnote{It was pointed out to the authors by Holmes that there is a minor error in the description of the rubber moduli space of maps in~\cite{MW17}, concerning the integral structure on the base of this tropical moduli space, or equivalently, a certain root construction along the logarithmic boundary; we are informed that the clarifications will appear in a revised version of~\cite{BHPSS}. The integral structure is not critical for us, so long as it ensures that the universal curve is reduced.}, where in the interior of each cone, the images of the vertices of the universal curve in $\mathbb R$ are totally ordered. Define $\DR(\sV_i)^{\trop}$ to be this polyhedral subdivision. We explain it more combinatorial terms, and record it for ease of access. 

\begin{terminology}\label{graph-terminology}
Let $p$ be a point in $\DR(\sV_i)^{\trop}$. The point $p$ determines a piecewise linear function on $\plC$ up to constant with slopes given by $\sV_i$. Choose any such piecewise linear function
\[
F: \plC_p\to \RR.
\]
\begin{enumerate}[(i)]
\item We refer to the underlying weighted graph $\mathsf G$ of $\plC_p$ as the \textbf{stable source graph at $p$}. 
\item Let $\mathcal R$ be the subdivision of $\RR$ obtained by placing a vertex at the image of each vertex of $\plC_p$. Let $T$ be the graph underlying $\mathcal R$. This graph is independent of the choice of $F$. We refer to the graph $T$ as the \textbf{target graph at $p$.}
\item Let $\plC_p'$ be obtained by subdividing $\plC_p$ at the preimages of all vertices of $\mathcal R$. Let $\Gamma$ be its underlying weighted dual graph, labeling all new vertices as genus $0$. We refer to $\Gamma$ as the \textbf{semistable source graph at $p$.}
\item The \textbf{rubber combinatorial type at $p$} is the data of the weighted dual graph $\Gamma$, the target graph $T$, the graph morphism between them, and the slope decoration for each edge and leg of $\Gamma$. 
\end{enumerate}
\end{terminology}

If the rubber type is fixed, the moduli of maps with this type is a simplicial cone.

\begin{proposition}\label{lem: simplicial} 
Fix $\sV_i$ as above and let $\mathsf G$ be a weighted dual graph. Let $\sigma_{\mathsf G}$ denote the tropical moduli cone. The closure of the set of points of $\sigma_{\mathsf G}$ that lie in $\DR(\sV_i)_{\#}^{\trop}$ and have a fixed rubber combinatorial type forms a simplicial cone. 
\end{proposition}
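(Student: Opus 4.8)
The plan is to parameterize the locus explicitly and realize it as the image of a closed orthant under an injective linear map. Fixing the rubber combinatorial type pins down the target chain $T$ (say with $k$ vertices), the semistable source graph $\Gamma$ with its contraction to $\mathsf{G}$, the morphism $\Gamma\to T$, and the slope $s_e\in\ZZ$ that any admissible balanced function $F$ takes along each edge $e$ of $\mathsf{G}$. By the balancing condition, $F$ has constant slope on each edge of $\mathsf{G}$, so the $s_e$ are genuinely part of this fixed data; note that balancing alone leaves a $b_1(\mathsf{G})$-dimensional space of slope assignments, and it is exactly the choice of rubber type that rigidifies it. A point of $\sigma_{\mathsf{G}}$ is a length vector $(\ell_e)_{e\in E(\mathsf{G})}$, and carrying the prescribed type forces $F(v)=y_{\phi(v)}$ at each vertex $v$, where $y_1<\dots<y_k$ are the distinct vertex images in $\RR$ (normalized so $y_1=0$) and $\phi\colon V(\mathsf{G})\to\{1,\dots,k\}$ is the vertex-to-position assignment recorded by the type.

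First I would isolate the free parameters. Writing $\delta_j=y_{j+1}-y_j$ for $j=1,\dots,k-1$ and $h=\#\{e:s_e=0\}$, the natural coordinates are the gaps $\delta_j$ together with the lengths of the \emph{horizontal} edges $s_e=0$ (where $\phi(u)=\phi(w)$), which $F$ leaves unconstrained. I then define the linear map
\[
\Phi\colon \RR_{\ge 0}^{\,k-1}\times \RR_{\ge 0}^{\,h}\longrightarrow \RR^{E(\mathsf{G})}
\]
sending the horizontal coordinates identically to the corresponding $\ell_e$, and setting, for each edge $e$ with $s_e\ne 0$ and endpoints $u,w$,
\[
\ell_e=\frac{1}{|s_e|}\sum_{j\in J_e}\delta_j,\qquad J_e=\{\,\min(\phi(u),\phi(w)),\dots,\max(\phi(u),\phi(w))-1\,\}.
\]
This formula merely records $F(w)-F(u)=s_e\ell_e$. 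I would then check that $\Phi$ lands in $\sigma_{\mathsf{G}}$ (all $\ell_e\ge 0$ on the orthant, by the sign conventions the type imposes), that it automatically solves the cocycle relations $\sum\pm s_e\ell_e=0$ around cycles of $\mathsf{G}$ needed for $F$ to exist, and that the open orthant maps exactly onto the set of points carrying the prescribed type (strict ordering $\delta_j>0$ and positive horizontal lengths prevent any further degeneration). Hence the closure of the locus is precisely $\Phi$ of the closed orthant.

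The crux, and the step I expect to be the main obstacle, is injectivity of $\Phi$: once this is established, the image of the simplicial orthant under an injective linear map is again a simplicial cone, spanned by the linearly independent rays $\Phi(e_i)$, and we are done. The horizontal lengths are read off directly, so the issue is reconstructing the gaps. From $(\ell_e)_{s_e\ne 0}$ I recover every difference $y_{\phi(w)}-y_{\phi(u)}=s_e\ell_e$; I then consider the graph on the position set $\{1,\dots,k\}$ whose edges are the images under $\phi$ of the edges of $\mathsf{G}$. Since $\mathsf{G}$ is connected and $\phi$ is surjective onto positions, the image graph is connected, so summing known differences along a path from $1$ to any $j$ and using $y_1=0$ reconstructs all $y_j$, hence all $\delta_j$. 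The genuine content here is that the consistency (cocycle) relations forced by the existence of $F$ do not collapse the parameters, and connectivity of the dual graph $\mathsf{G}$ is exactly what excludes this. Combining, the closure of the locus equals $\Phi\bigl(\RR_{\ge 0}^{\,k-1}\times\RR_{\ge 0}^{\,h}\bigr)$, a simplicial cone of dimension $(k-1)+h$.
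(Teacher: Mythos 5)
Your proof is correct and follows essentially the same route as the paper's: both parameterize the locus by the lengths of the edges of the target graph $T$ (your gaps $\delta_j$) together with the lengths of the slope-zero edges, every other edge length being determined by dividing the spanned target length by the slope. The paper states this parameterization in one line and leaves the injectivity implicit; your explicit reconstruction of the $\delta_j$ via connectivity of $\mathsf G$ supplies the one detail the paper glosses over, but it is not a different argument.
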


\begin{proof}
Fix a rubber combinatorial type and examine the graph map $\Gamma\to T$. Let $C\subset E$ be the subset of edges of $\Gamma$ that have slope $0$ and let $N$ be the set of edges in the target graph $T$. The set of points in $\DR(\sV_i)_{\#}^{\trop}$ with this rubber combinatorial type is parameterized by arbitrary positive edge lengths corresponding to the contracted edges in $\Gamma$, and arbitrary positive edge lengths corresponding to the edges in the target graph $T$. Every non-contracted edge of $\Gamma$ maps with positive slope onto one of the edges of $T$ and therefore its length is determined as well. 
\end{proof}

The space $\DR(\sV_i)^{\trop}$  comes equipped with a universal stable tropical curve $\plC\to \DR(\sV_i)^{\trop}$, obtained by pulling back the universal tropical curve from $\cM_{g,n}^{\trop}$. It is also equipped with a universal semistable tropical curve, coming from Terminology~\ref{graph-terminology}(iii). Let $\DR(\sV_i)^{\trop}$ be the cone structure refining $\DR(\sV_i)_{\#}^{\trop}$ such that two points lie in the interior of a cone if and only if their rubber combinatorial types coincide. The proposition above shows that the cones are simplicial. We choose an integral structure on $\DR(\sV_i)^{\trop}$ such that (i) the cones all in fact smooth and (ii) the universal semistable source tropical curve is weakly semistable over the tropical moduli space.

\begin{remark}
The subdivision studied by Marcus and Wise is referred to as the \textit{rubber} space for its connections to Gromov--Witten theory of non-rigid targets~\cite{GV05}. The subdivision was studied earlier in the genus $0$ context by Cavalieri, Markwig, and the second author~\cite[Section~3.1]{CMR14b} and the pictures in loc. cit. may help a reader better visualize the cones in the subdivision. A similar construction appears in~\cite[Section~2.4]{NR19}.
\end{remark}

With this conical and integral structure structure fixed, we pass to Artin fans and define
\[
t\DR(\sV): = a^\star \DR(\sV_i)^{\trop} \times_{a^\star \cM_{g,n}^\trop} \Mbar_{g,n}. 
\]
The arrow from this fiber product to $\Mbar_{g,n}$ is a logarithmically \'etale and birational morphism. There are two flat families of universal curves over $t\DR(\sV)$: (i) a universal \textit{stable curve} pulled back from $\Mbar_{g,n}$; (ii) a universal \textit{semistable curve} obtained from the universal stable curve by inserting $2$-pointed $\mathbb P^1$-components. The space is stratified by rubber combinatorial types. On the locally closed strata, the universal semistable curve is obtained from Terminology~\ref{graph-terminology}(iv). The details may be found in~\cite[Section~5.2]{MW17}. 

We proceed analogously for the toric contact cycles. Fix a vector $\underline \sV$ of $n$-tuples and construct a subcomplex
\[
\TC(\underline\sV)^{\trop}\hookrightarrow\cM_{g,n}^{\trop}.
\]
as the fiber product\footnote{There is no difficulty in defining the fiber product. The map from $\DR^{\trop}(\sV_i)$ to $\cM^{\trop}_{g,n}$ is representable by cone complexes, so we work on a cover by cone complexes, where it is defined in the standard fashion in~\cite[Section~2.2]{Mol16}.} of the all tropical spaces $\DR(\sV_i)^{\trop}$ over the inputs in $\underline \sV$, over space $\cM^{\trop}_{g,n}$. In identical fashion, the subcomplex above determines a Deligne--Mumford stack
\[
t\TC(\underline \sV)\to \Mbar_{g,n},
\]
where the map to curves is a logarithmically \'etale and birational morphism. 

\subsection{Toric contacts} Each stack $t\DR( \sV_i)$ is equipped with a universal stable curve. The birational spaces induced by the subdivisions above admit Abel--Jacobi morphisms, extending
\[
(C,p_1,\ldots,p_n)\mapsto \mathcal O_C( V_i\cdot\underline p).
\] 
The construction here is due to Marcus--Wise~\cite{MW17}, but see also~\cite{Gue16,Hol17}. The universal curve is equipped with a well-defined piecewise linear function up to constant functions, and the toric dictionary gives rise to a line bundle. This gives rise to an Abel--Jacobi section associated to $\sV_i$
\[
t\DR(\underline \sV_i) \to \mathsf{Pic}_{g,n}
\]
where the codomain is the pullback of the universal Picard group from the moduli stack of curves. Since it is a section of a smooth fibration, this Abel--Jacobi section is a regular embedding, and can be equipped with a perfect obstruction theory, and so comes equipped with a Gysin pullback~\cite[Section~4.6]{MW17}. Analogously, the factorwise morphism associated to $\underline \sV$ is
\[
t\TC(\underline \sV)\to  \mathsf{Pic}_{g,n}\times_{t\TC(\underline \sV)}\cdots \times_{t\TC(\underline \sV)}  \mathsf{Pic}_{g,n}.
\]

\begin{definition}
The \textbf{toric contact space} $\TC(\underline \sV)$ is the pullback of the factorwise zero section in $t\TC(\underline \sV)$ under the morphism above.  The virtual class, defined by Gysin pullback of the zero section is the \textbf{toric contact class}. Its pushforward to $\Mbar_{g,n}$ is the \textbf{toric contact cycle}. For a single factor, we replace $\TC$ with $\DR$ and refer to it as \textbf{double ramification}  space, class, or cycle.
\end{definition}

A word of justification is required concerning the pushforward operation in the definition. The space $\TC(\underline \sV)$ is defined by pulling back a closed subscheme of a non-proper space -- the universal Picard -- to a non-proper space. It is nevertheless proper, see~\cite[Section~4]{MW17} for the rank $1$ case. Since the diagonal is closed, the general case follows from this.

By~\cite[Section~5]{MW17} that the double ramification space defined above coincides up to saturation of the logarithmic structure with the moduli space of rubber stable maps to a non-rigid $\mathbb P^1$ with its toric logarithmic structure. We use the Gromov--Witten perspective heavily in the analysis below; the standard reference is~\cite[Section~2.4]{GV05}. A modern treatment via logarithmic geometry is given in~\cite[Section~5]{MW17}, and a careful explanation is recorded in~\cite[Section~6]{BHPSS}. 

For the higher rank case, we note that $\underline\sV$ consists of $r$ entries, the space $\TC(\sV)$ can be considered a compactification of the space of maps to $(\PP^1)^r$ up to the $\mathbb G_m^r$ rubber. \footnote{These spaces have not appeared in the literature, but appear implicitly in the boundary structure of the spaces constructed in~\cite{R19} and will appear in forthcoming work of Carocci--Nabijou.} By birational invariance, the target $(\PP^1)^r$ can be replaced with any toric compactification of the same torus~\cite{AW}. A more precise relationship can be deduced from what follows, see Remark~\ref{rem: tc-gwt}.

\subsection{Product rule}\label{sec: product-rule} For ease of bookkeeping, we assume $\underline \sV$ has two entries $\sV_1$ and $\sV_2$. We have the following basic product diagram:
\[
\begin{tikzcd}
\TC(\underline \sV)\arrow{r}{\upsilon} \arrow{d}\drar[phantom, "\square"] & \mathsf P(\underline \sV)\arrow{d}\arrow{r} \drar[phantom, "\square"]& \DR(\sV_1)\times \DR(\sV_2)\arrow{d} \\
t\TC(\underline \sV)\arrow{r} & t\mathsf P(\underline \sV)\drar[phantom, "\square"]\arrow{d}\arrow{r} & t\DR(\sV_1)\times t\DR(\sV_2)\arrow{d}{\psi} \\
&\Mbar_{g,n}\arrow[swap]{r}{\Delta} & \Mbar_{g,n}\times \Mbar_{g,n}.
\end{tikzcd}
\]
There are two deficiencies in this diagram. First, the spaces on the fundamental class of $t\TC(\underline\sV)$ does not pushforward to that of $t\mathsf P(\underline \sV)$; the map $\upsilon$ is correspondingly not virtually birational. Second, while the diagonal is equipped with a normal bundle, there is no guarantee that the arrow above it is. The issues are resolved simultaneously by weakly semistabilizing $\psi$.

Fx a complete subdivision, possibly including passage to finite index sublattices, 
\[
\cM^{\trop}_{g,\underline \sV}\to \cM_{g,n}^{\trop}
\]
with the following two properties:
\begin{enumerate}[(i)]
\item The cone stack $\cM^{\trop}_{g,\underline \sV}$ is smooth.
\item For each vector $\sV_i$ the pullback of $\DR(\sV_i)^{\trop}$ mapping $\cM^{\trop}_{g,\underline \sV}$ has the property that cones of the source surject on to cones, and the lattices of the source cones are saturated in the image. 
\end{enumerate}

We form the fine and saturated logarithmic base change to obtain
\[
\begin{tikzcd}
t\DR(\sV_i)^\dagger\arrow{d}\arrow{r} &\cM^{\trop}_{g,\underline \sV} \arrow{d}\\
t\DR(\sV_i)\arrow{r} & \cM^{\trop}_{g,n}
\end{tikzcd}
\]
The space $t\DR(\sV_i)^\dagger$ inherits an Abel--Jacobi section to its universal Picard variety, and Gysin pullback of the zero section gives rise to a space $\DR(\sV_i)^\dagger$ equipped with a virtual class. There is a new diagram
\[
\begin{tikzcd}
\TC(\underline \sV)^\dagger\arrow{r}{\upsilon} \arrow{d} \arrow{r} \drar[phantom, "\square"]& \DR(\sV_1)^\dagger\times \DR(\sV_2)^\dagger\arrow{d} \\
t\TC(\underline \sV)^\dagger \arrow[swap]{r}{\delta} \arrow{d} & t\DR(\sV_1)^\dagger\times t\DR(\sV_2)^\dagger\arrow{d}{\psi} \\
\Mbar_{g,\underline \sV}\arrow[swap]{r}{\Delta_{\underline \sV}} & \Mbar_{g,\underline \sV}\times \Mbar_{g,\underline \sV}
\end{tikzcd}
\]

\noindent
where the lower square is Cartesian in the category of fine and saturated logarithmic stacks; the upper square is Cartesian in all categories.

\begin{lemma}
The morphism $\psi$ is flat of relative dimension $0$ with reduced fibers. 
\end{lemma}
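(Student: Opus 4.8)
The plan is to observe that $\psi$ is nothing but the external product $f_1 \times f_2$ of the two structure morphisms $f_i \colon t\DR(\sV_i)^\dagger \to \Mbar_{g,\underline \sV}$, and to reduce the lemma to the assertion that each $f_i$ is flat of relative dimension $0$ with reduced fibers; the three properties then propagate across the product by formal arguments. Note at the outset that $f_i$ is the fine and saturated logarithmic base change of the logarithmically \'etale, birational morphism $t\DR(\sV_i) \to \Mbar_{g,n}$ along the modification $\Mbar_{g,\underline \sV} \to \Mbar_{g,n}$, so $f_i$ is itself logarithmically \'etale and birational. Since $\cM^{\trop}_{g,\underline \sV}$ was chosen smooth, both $t\DR(\sV_i)^\dagger$ and $\Mbar_{g,\underline \sV}$ are tropically smooth, hence have smooth underlying schemes; both are irreducible of dimension $3g-3+n$, being birational to $\Mbar_{g,n}$.

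First I would verify that each $f_i$ is weakly semistable. The map of cone stacks induced by $f_i$ is exactly the map from the pullback of $\DR(\sV_i)^{\trop}$ to $\cM^{\trop}_{g,\underline \sV}$, and condition (ii) imposed on the subdivision says precisely that every cone of the source surjects onto a cone of the target with saturated lattice quotient. This is the combinatorial criterion of Abramovich--Karu for weak semistability~\cite{AK00,Mol16}; applied to the logarithmically smooth morphism $f_i$ of tropically smooth schemes, it shows that $f_i$ is flat with reduced fibers. Because $t\DR(\sV_i)^\dagger$ and $\Mbar_{g,\underline \sV}$ are irreducible of the same dimension, flatness forces the fibers of $f_i$ to have dimension $\dim t\DR(\sV_i)^\dagger - \dim \Mbar_{g,\underline \sV} = 0$, so $f_i$ has relative dimension $0$.

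Then I would assemble these properties for the product. Flatness is stable under base change and composition, and $\psi$ factors as
\[
t\DR(\sV_1)^\dagger \times t\DR(\sV_2)^\dagger \xrightarrow{\ \id \times f_2\ } t\DR(\sV_1)^\dagger \times \Mbar_{g,\underline \sV} \xrightarrow{\ f_1 \times \id\ } \Mbar_{g,\underline \sV} \times \Mbar_{g,\underline \sV},
\]
where each arrow is a pullback of a flat morphism; hence $\psi$ is flat, of relative dimension $0+0 = 0$. For the fibers, a closed point of $\Mbar_{g,\underline \sV} \times \Mbar_{g,\underline \sV}$ is a pair $(y_1,y_2)$ of $k$-points, and the fiber of $\psi$ over it is $f_1^{-1}(y_1) \times_k f_2^{-1}(y_2)$. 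Each factor is a reduced finite $k$-scheme by the previous paragraph, and since $k$ is algebraically closed, hence perfect, these factors are geometrically reduced and their product is again reduced. Thus every fiber of $\psi$ is reduced, which completes the argument.

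The one genuinely substantive step is the middle paragraph: recognizing condition (ii) as the Abramovich--Karu surjectivity-and-saturation criterion and invoking it to pass from the purely combinatorial statement about cones to flatness with reduced fibers of the underlying morphism of schemes. Everything else is the formal stability of flatness under composition and base change, together with the behaviour of reducedness under products over an algebraically closed field.
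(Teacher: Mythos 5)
Your argument is correct and rests on the same key input as the paper's own proof: the Abramovich--Karu toroidal criteria, which condition (ii) of the subdivision was designed to satisfy. The only organizational difference is that you verify weak semistability factor-by-factor and assemble the product by formal base-change arguments, whereas the paper applies the criteria to $\psi$ directly and deduces flatness by miracle flatness (Cohen--Macaulay source, smooth target, equidimensional fibers) --- essentially the same route.
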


\begin{proof}
The map $\psi$ is a logarithmically \'etale with smooth target. The source is a locally toric, therefore Cohen--Macaulay. Flatness will follow from equidimensionality of the fibers. Equidimensionality and reducedness of the fibers follow from the toroidal criteria~\cite[Section~4]{AK00}; see also~\cite{Mol16}. 
\end{proof}

\begin{lemma}
The squares in the diagram are Cartestian diagrams of algebraic stacks and of fine and saturated logarithmic stacks. 
\end{lemma}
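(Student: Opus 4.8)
The plan is to handle the two squares separately, since they are Cartesian for different reasons. For the upper square there is essentially nothing to prove: the two downward arrows $\DR(\sV_i)^\dagger \to t\DR(\sV_i)^\dagger$ arise by Gysin pullback of the zero section of the universal Picard variety, and since the Picard fibration carries the logarithmic structure pulled back from its base, its sections---and hence the pullbacks of the zero section---are \emph{strict}. Because $\TC(\underline\sV)^\dagger$ is defined as the pullback of the factorwise zero section along $\delta$, the upper square is a genuine fiber product, and for a pullback along a strict morphism the fine, fine-and-saturated, and underlying ordinary fiber products all agree (saturation and integralization only ever modify the underlying stack along a non-strict arrow). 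Thus the upper square is Cartesian in all categories at once, and in particular in both asserted ones.

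The content is in the lower square. By construction it is Cartesian in fs log stacks, since $t\TC(\underline\sV)^\dagger$ is defined to be the fs log base change of $\psi$ along the diagonal $\Delta_{\underline\sV}$; what remains is to show that the forgetful functor to algebraic stacks carries this fs fiber product to the ordinary fiber product of underlying stacks. First I would record that the combinatorial condition (ii) imposed on the subdivision $\cM^{\trop}_{g,\underline\sV}\to \cM^{\trop}_{g,n}$---that cones of the source surject onto cones of the target with saturated lattices---is exactly the statement that $\psi$ is an \emph{integral and saturated} morphism of fs log stacks; equivalently, being logarithmically smooth, this is the assertion of the preceding lemma that $\psi$ is flat with reduced fibers.

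The crucial general fact to invoke is then that fs log base change along an integral and saturated morphism commutes with passage to underlying algebraic stacks: at the level of monoids, the pushout of an integral (resp.\ saturated) morphism is again integral (resp.\ saturated), so neither the integralization nor the saturation step modifies the underlying scheme, see~\cite{AK00,Mol16}. Applying this to the base change of $\psi$ along $\Delta_{\underline\sV}$ identifies the underlying stack of $t\TC(\underline\sV)^\dagger$ with the ordinary fiber product, which is the claim for the lower square. I expect the only real obstacle to be bookkeeping rather than conceptual: one must check that integrality and saturatedness hold \emph{uniformly} across the whole cone stack---not merely on a single maximal cone---and that they persist under the fs base change being performed, so that the invoked commutation statement applies verbatim. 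Via the strict cover of the Artin fans by Artin cones, both points reduce to the toric statements in~\cite{AK00,Mol16}, where they are standard.
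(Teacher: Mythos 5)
Your argument is correct and matches the paper's proof: the paper likewise notes that the squares are Cartesian in fs log stacks by definition and that $\psi$ is integral and saturated by construction (via condition (ii) on the subdivision, equivalently flatness with reduced fibers from the preceding lemma), so the fs and ordinary fiber products coincide. Your additional remarks on the strictness of the vertical arrows in the upper square only elaborate on what the paper asserts when it says that square is Cartesian in all categories.
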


\begin{proof}
The squares are defined to be Cartesian in fine and saturated logarithmic stacks. The morphism $\psi$ is integral and saturated by construction, so the two pullbacks coincide. 
\end{proof}

\begin{theorem}\label{thm: product-formula}
The toric contact cycle in $\Mbar_{g,\underline \sV}$ is the product of the strict transforms of the double ramification cycles, i.e. there is an equality of classes in the Chow groups of $\TC(\underline \sV)^\dagger$:
\[
[\TC(\underline \sV)^\dagger]^{\mathsf{vir}} = \Delta_{\underline \sV}^![\DR(\sV_1)^\dagger\times \DR(\sV_2)^\dagger]^{\mathsf{vir}}.
\]
\end{theorem}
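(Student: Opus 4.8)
Theorem C.6 (Product rule for toric contact cycles)

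The plan is to prove this by reducing the product of double ramification cycles to an intersection with the diagonal, and then transporting this relation through the semistable model using the compatibility of virtual pullbacks established in the preceding lemmas. The essential input is that after passing to the smooth refinement $\Mbar_{g,\underline\sV}$, the map $\psi$ has been made flat of relative dimension zero with reduced fibers, so the diagonal pullback $\Delta_{\underline\sV}^!$ interacts cleanly with the virtual classes.

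First I would record the starting point on the unrefined space: by the definition of the toric contact class and the factorwise structure of the Abel--Jacobi morphism, the class $[\TC(\underline\sV)]^{\mathsf{vir}}$ is obtained by applying the factorwise Gysin pullback of the zero section, which on each factor yields $[\DR(\sV_i)]^{\mathsf{vir}}$. The content of the theorem is that, after refinement, this factorwise construction is computed by the diagonal intersection against the external product $[\DR(\sV_1)^\dagger \times \DR(\sV_2)^\dagger]^{\mathsf{vir}}$. This is the standard ``reduction to the diagonal'' mechanism: the fiber product $t\TC(\underline\sV)^\dagger$ sits as the preimage of the diagonal $\Delta_{\underline\sV}$ inside $t\DR(\sV_1)^\dagger \times t\DR(\sV_2)^\dagger$ under $\psi$, and the upper square in the refined diagram is Cartesian in all categories.

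The key steps, in order, are as follows. Step one: invoke the two preceding lemmas to conclude that $\psi$ is flat with reduced fibers and that both squares are Cartesian simultaneously in algebraic stacks and in fine and saturated logarithmic stacks, so that the fine-saturated base change agrees with the ordinary one and no spurious components are introduced. Step two: observe that the obstruction theories on the two $\DR(\sV_i)^\dagger$ factors, pulled back from the Abel--Jacobi sections, combine to a perfect obstruction theory on the external product $\DR(\sV_1)^\dagger \times \DR(\sV_2)^\dagger$, whose virtual class is the external product of the two virtual classes. Step three: apply the compatibility of the refined Gysin pullback $\Delta_{\underline\sV}^!$ with the virtual class, using that $\psi$ is flat so that $\Delta_{\underline\sV}^!$ commutes with virtual pullback along the factorwise zero sections (this is the functoriality of virtual pullbacks in the sense of Manolache, applied to the compatible obstruction theories on $\psi$ and on the Abel--Jacobi sections). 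Step four: identify the resulting class on the fiber product with $[\TC(\underline\sV)^\dagger]^{\mathsf{vir}}$, using that the factorwise zero-section pullback is, by the Cartesian property, exactly the diagonal restriction of the product virtual class.

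The main obstacle I anticipate is Step three: verifying that the diagonal Gysin pullback $\Delta_{\underline\sV}^!$ and the virtual pullbacks defining the $\DR(\sV_i)^\dagger$ are compatible, i.e. that the two obstruction theories fit into a commuting square to which functoriality of virtual pullback applies. The delicate point is that the diagonal $\Delta_{\underline\sV}$ is an honest regular embedding only because the refinement $\Mbar_{g,\underline\sV}$ has been chosen smooth, and the Abel--Jacobi sections are regular embeddings into the universal Picard varieties; one must check that these two sets of normal data are transverse in the appropriate derived sense so that the excess contribution vanishes and the virtual classes multiply without correction. This is where the careful choice of the smooth model, together with the flatness and reducedness of $\psi$, is essential: flatness guarantees that $\psi^! = \psi^\star$ on cycles and that the diagonal pullback does not meet excess intersection, so the commutation of the Gysin maps reduces to the standard compatibility of virtual pullbacks over a smooth base.
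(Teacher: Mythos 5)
Your proposal is correct and follows essentially the same route as the paper: reduce to the base-changed diagonal $\delta$ using flatness of $\psi$ and the Cartesian squares from the preceding lemmas, then observe that the obstruction theories on the vertical arrows of the top square agree (since the universal Picard group and Abel--Jacobi sections pull back along $\delta$), so the conclusion follows from compatibility of Gysin pullbacks. The "transversality" worry you raise in your final paragraph is resolved exactly as you suspect: the identification of obstruction theories means there is no excess term to control.
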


\begin{proof}
We examine the diagram above. The morphism $\Delta_{\underline{\sV}}$ is a regular embedding since $\Mbar_{g,\underline{\sV}}$ is smooth. By the lemma, the map $\psi$ is flat and therefore the middle row map $\delta$ is also a regular embedding. The Gysin morphisms for $\delta$ and $\Delta_{\underline{\sV}}$ coincide~\cite[Theorem~6.2(c)]{Ful84}. We will show
\[
[\TC(\underline \sV)^\dagger]^{\mathsf{vir}} = \delta^![\DR(\sV_1)^\dagger\times \DR(\sV_2)^\dagger]^{\mathsf{vir}}.
\]
The obstruction theory on the morphism 
\[
\DR(\sV_1)^\dagger\times \DR(\sV_2)^\dagger\to t\DR(\sV_1)^\dagger\times t\DR(\sV_2)^\dagger
\]
is obtained, on each factor, by pulling back the obstruction theory of the appropriate Abel--Jacobi section; the latter is a regular embedding since it is a section of a smooth fibration and therefore has a normal bundle. Similarly, the Picard group of the universal curve over $t\TC(\underline{\sV})$ is the pullback along $\delta$ of the Picard group from either factor, and Abel--Jacobi sections on $t\TC(\underline{\sV})$ obtained by composition. It follows that the obstruction theory on the vertical arrows in the top square are the same. The theorem follows by compatibility of Gysin pullback. 
\end{proof}

\begin{remark}[via Gromov--Witten theory]\label{rem: tc-gwt}
There is different route to the toric contact cycle, via Gromov--Witten theory. Fix the vector of $r$ contact vectors $\underline \sV$. Let $X$ be any proper toric variety equipped with its toric logarithmic structure. The data $\underline \sV$ determines a logarithmic stable maps space $\mathsf{K}_{g,\underline\sV}(X)$. We make a simplifying assumption that the first marked point has contact order $0$ with all divisors, that is, the first entry in each contact vector is $0$. There is a morphism 
\[
\mathsf{ev_1}: \mathsf{K}_{g,\underline \sV}(X)\to X.
\]
The target can be taken to be a product of projective lines. The Gromov--Witten cycle 
\[
\mathsf{ev_1}^{\star}([\mathsf{pt}])\cap [\mathsf{K}_{g,\underline \sV}(X)]^{\mathsf{vir}} 
\]
can be pushed forward to $\Mbar_{g,n}$. This cycle completes $\TC^\circ(\underline\sV)$ on $\cM_{g,n}$: curves that admit a map to $X$ with contact orders $\underline \sV$, up to the torus action. When $X$ is $\mathbb P^1$, this completed class coincides with the double ramification cycle. Since $\TC(\underline \sV)$ satisifes a product rule and the Gromov--Witten class also satisifes a product rule~\cite{Herr,R19b} the higher rank classes also coincide. The virtual class is pulled back under forgetting points with trivial contact order, and the pullback along $\Mbar_{g,n+1}\to\Mbar_{g,n}$ is injective on Chow groups, so there is no loss of information in the assumption. 
\end{remark}

\subsection{The lightning class}\label{sec: lightning-class} Fix a single contact vector $\sV$. Consider any \textit{complete} smooth subdivision 
\[
\cM^{\trop}_{g,\sV}\to \cM^{\trop}_{g,n}
\] 
with the property that the pullback of $\DR^{\trop}(\sV)$ is weakly semistable over $\cM_{g,n}^{\trop}$. A subdivision satisfying this property is stable under further subdivision, so this property holds for any sufficiently fine subdivision. The constructions above determine a class
\[
[\DR^\lightning(\sV)] \ \ \textrm{in} \ \ \mathsf{CH}^g(\Mbar_{g,\sV};\QQ),
\]
by pushing forward the double ramification class. We verify the independence of choices. 

\begin{proposition}
The double ramification class constructed above is stable under pullback, i.e. for any further logarithmic blowup
\[
\Mbar_{g,\sV}'\to \Mbar_{g,\sV}
\]
with smooth source the two double ramification cycles are related by pullback along the local complete intersection morphism above. 
\end{proposition}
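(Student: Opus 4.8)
The plan is to prove the stronger statement that the classes $[\DR^\lightning(\sV)]$ attached to admissible subdivisions form a \emph{compatible system}: if $b:\Mbar_{g,\sV}'\to\Mbar_{g,\sV}$ is the log blowup corresponding to a smooth refinement $\cM^{\trop}_{g,\sV}{}'\to\cM^{\trop}_{g,\sV}$, then $b^![\DR^\lightning(\sV)]=[\DR^\lightning(\sV)']$. Since source and target are smooth, $b$ is lci and $b^!$ is defined. Because the weakly semistable property is stable under further subdivision, the refinement $\cM^{\trop}_{g,\sV}{}'$ is again admissible, so the right-hand class is defined; moreover the semistable model over $\Mbar_{g,\sV}'$ is canonically the pullback of $\DR^{\trop}(\sV)$ to $\cM^{\trop}_{g,\sV}{}'$, which by functoriality of tropical pullback coincides with the fine and saturated log base change of $t\DR(\sV)^\dagger$ along $b$. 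This removes any choice and eliminates circularity.

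The heart of the argument, and the main obstacle, is to upgrade this log fibre product to an \emph{ordinary} fibre product. Writing $\rho:t\DR(\sV)^\dagger\to\Mbar_{g,\sV}$ and $\rho':t\DR(\sV)^{\dagger\prime}\to\Mbar_{g,\sV}'$ for the two weakly semistable families, the map $\rho$ is integral and saturated exactly as in the lemma used for $\psi$ in Section~\ref{sec: product-rule}; hence the fs log and scheme-theoretic base changes agree and the square
\[
\begin{tikzcd}
t\DR(\sV)^{\dagger\prime}\ar[r,"\beta"]\ar[d,"\rho'",swap] & t\DR(\sV)^\dagger\ar[d,"\rho"]\\
\Mbar_{g,\sV}'\ar[r,"b",swap] & \Mbar_{g,\sV}
\end{tikzcd}
\]
is Cartesian in ordinary stacks. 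This is precisely where weak semistability earns its keep: without integrality and saturatedness the scheme-theoretic pullback would acquire the spurious excess components described in Section~\ref{sec: calculation-strategy}, and the naive pullback of the class would be wrong. As $t\DR(\sV)^\dagger$ and $t\DR(\sV)^{\dagger\prime}$ are reduced, irreducible, of equal dimension, and $\beta$ is birational, I get $b^![t\DR(\sV)^\dagger]=[t\DR(\sV)^{\dagger\prime}]$, just as in the identity $q^!(V)=V'$ in the proof of Theorem~\ref{thm: product-formula}.

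It remains to propagate this through the Abel--Jacobi construction and to commute $b^!$ past pushforward. The universal Picard stack, the piecewise linear function with slopes $\sV$, and hence the Abel--Jacobi section together with the zero section and its rank-$g$ normal bundle, are all pulled back along $b$; write $0^!$ and $0'^!$ for the resulting refined Gysin operators and $r,r'$ for the proper maps $\DR(\sV)^\dagger\to\Mbar_{g,\sV}$, $\DR(\sV)^{\dagger\prime}\to\Mbar_{g,\sV}'$. Commutativity of refined Gysin maps~\cite[Theorem~6.4]{Ful84} gives $b^!0^!=0'^!b^!$, so that
\[
b^![\DR(\sV)^\dagger]^{\mathsf{vir}}=b^!0^![t\DR(\sV)^\dagger]=0'^![t\DR(\sV)^{\dagger\prime}]=[\DR(\sV)^{\dagger\prime}]^{\mathsf{vir}}.
\]
Finally, compatibility of the lci pullback $b^!$ with proper pushforward in the induced Cartesian square~\cite[Theorem~6.2]{Ful84} yields
\[
b^![\DR^\lightning(\sV)]=b^!\,r_\star[\DR(\sV)^\dagger]^{\mathsf{vir}}=r'_\star\,b^![\DR(\sV)^\dagger]^{\mathsf{vir}}=r'_\star[\DR(\sV)^{\dagger\prime}]^{\mathsf{vir}}=[\DR^\lightning(\sV)'],
\]
which is the desired stability. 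The only genuine work is the Cartesian-ness of the first square; everything downstream is the standard bivariant formalism of~\cite{Ful84} together with the observation that the Abel--Jacobi obstruction theory is pulled back.
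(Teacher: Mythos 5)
Your proposal is correct and follows essentially the same route as the paper: identify the strict transform of $t\DR(\sV)$ over the blowup with the fs (equivalently, by weak semistability, ordinary) base change, pull back its fundamental class along the lci morphism, and then use that the Abel--Jacobi obstruction theories are pulled back so that the Gysin maps commute and push forward. The paper phrases the first step slightly more sharply --- weak semistability of a subdivision forces the pullback of $\DR(\sV)^{\trop}$ to be a union of cones, so the relevant vertical arrows are open immersions and flatness/Cartesianness is immediate --- but this is the same observation you make via integrality and saturatedness.
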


\begin{proof}
The blowup in question is given by a subdivision of the tropical moduli stack $\cM_{g,n}^{\trop}$. By supposition, the pullback of $\DR(\sV)^{\trop}$  to $\cM_{g,\sV}^{\trop}$ is a union of cones. These cones determine an open substack $\mathcal U_{g}(\sV)$; exactly as in the previous section, the substack has an Abel--Jacobi section, and the pullback of the zero section is the proper stack which we denote $\mathsf Z_g(\sV)$ to avoid a clash of notation. Denoting the analogous constructions on $\Mbar_{g,V}'$ with primes, we have two fiber squares in the category of algebraic stacks:
\[
\begin{tikzcd}
\mathsf Z_g(\sV)'\arrow{r}\arrow{d} \drar[phantom, "\square"]& \mathsf Z_g(\sV)\arrow{d} \\
U_g(\sV)'\arrow{r}\arrow{d} \drar[phantom, "\square"]& U_g(\sV)\arrow{d} \\
\Mbar_{g,\sV}'\arrow{r} & \Mbar_{g,V}. 
\end{tikzcd}
\]
The vertical arrows in the lower square are flat and therefore it suffices to check that the double ramification cycle on $\mathsf Z_g(\sV)$ pulls back to that on $\mathsf Z_g(\sV)'$. Arguing as in the proof of Theorem~\ref{thm: product-formula}, the obstruction theories for the two vertical arrows in the upper square coincide, and the proposition follows by compatibility of Gysin pullbacks. 
\end{proof}

A basic consequence. The classes $[\DR^\lightning(\sV)]$ form a compatible system of classes on all blowups of $\Mbar_{g,n}$ and we find:
\[
[\DR^\lightning(\sV)]\ \ \textrm{in} \ \ \varprojlim \mathsf{CH}_\star(\Mbar_{g,n}')
\]
where the limit is taken over all smooth $\Mbar_{g,n}'$ obtained from $\Mbar_{g,n}$ by blowups. In fact, the class comes from one in the colimit of Chow cohomology groups under pullback. The determination of $[\DR^\lightning(\sV)]$ classes in any blowup satisfying the properties above determines the toric contact cycles.

\subsection{Projective bundle geometry}\label{sec: proj-bundle} We study $[\DR^\lightning(\sV)]$ by an inductive argument, using the self-similar structure of the boundary of the space $\DR(\sV)$ that is guaranteed by the gluing formula in relative Gromov--Witten theory: the boundary is built from smaller double ramification problems. The smaller cycles can be pushed forward to smaller moduli spaces of curves, but the induction will require a more refined pushforward -- to certain projective bundles over moduli of curves. 

The following is a rephrasing of results on the boundary structure of spaces of stable maps to rubber~\cite{FP,GV05}. Logarithmic strata of $t\DR(\sV)$ are specified by {rubber combinatorial types}
\[
\Theta = [\Gamma\to T]
\]
as in Section~\ref{sec: tropical-DR}. We remind the reader that $\Gamma$ is the semistable source graph, i.e. a weighted dual graph of a semistable curve, with edges and legs decorated by integers slopes and $T$ is a graph underlying a subdivision of $\RR$. 

Fix such a type $\Theta$ and inspect a vertex $u$ of $\Gamma$. Each flag of an edge emanating from $u$ is decorated by a slope, and we collect these slopes in a integer vector $\sV_u$ of length equal to the number of flags exiting $u$. The flags exiting $u$ are not labelled, so we pass to a finite cover to label them. The group $\mathsf{Aut}_\Theta$ of deck transformations of the cover is precisely the group of automorphisms of the graph $\Gamma$ preserving labels and commuting with the map to $T$. We denote the space $t\mathsf{DR}_{g(u)}(\sV_u)$ by $t\mathsf{DR}(u)$. There is a morphism
\[
\mathsf{cut}: t\mathsf{DR}_{[\Gamma\to T]}\to \left[\prod_{u\in V(\Gamma)} t\mathsf{DR}(u)/\mathsf{Aut}_\Theta\right]
\]
obtained by cutting the target graph, as in the degeneration/gluing formulae~\cite{CheDegForm,GV05,MR14}.

\begin{lemma}
The morphism $\mathsf{cut}$ is a torus torsor. 
\end{lemma}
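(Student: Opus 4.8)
The plan is to reduce the statement to a toric computation on Artin fans and then to identify the fibres of $\mathsf{cut}$ with a single orbit of a residual torus. First I would make the modular description of $\mathsf{cut}$ precise, using the gluing formula of relative Gromov--Witten theory~\cite{CheDegForm,GV05,MR14}: on the closed stratum $t\mathsf{DR}_{[\Gamma\to T]}$ the map normalises the universal semistable source curve along every edge of $\Gamma$, records the restriction of the rubber structure to each component $u$ as a point of $t\mathsf{DR}(u)=t\mathsf{DR}_{g(u)}(\sV_u)$ with contacts $\sV_u$, and forgets how the individual rubber $\mathbb{P}^1$'s are assembled into the connected target chain $T$. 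The clutching of the components is finite and is exactly absorbed by the $\mathsf{Aut}_\Theta$-quotient on the target, so on underlying curves $\mathsf{cut}$ is an isomorphism onto the quotient stack; the genuinely continuous part of every fibre is the target-assembly datum. Since we work at the $t$-level, there is no obstruction theory in play and the question is purely one about curves and their logarithmic structures.

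Second I would pass to Artin fans. Every space in sight is strict over its Artin fan, and $\mathsf{cut}$ is the strict pullback along the smooth surjection $\Mbar_{g,\bullet}\to a^\star\cM^{\trop}_{g,\bullet}$ of a morphism of cone stacks; as this base change is strict and smooth it is harmless for the torsor property, and reduces the claim to a toric computation on $\mathsf A_{\mathsf G}$. By Proposition~\ref{lem: simplicial} the type $\Theta$ corresponds to a simplicial subcone $\sigma_\Theta\subset\sigma_{\mathsf G}=\RR_{\geq 0}^{E(\Gamma)}$ of dimension $|C|+|E(T)|$, with coordinates the contracted-edge lengths ($C$ the slope-zero edges) together with the gaps between the levels of $T$, the length of each positive-slope edge being determined by the gap it covers and its slope. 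In the toric variety of $\sigma_{\mathsf G}$ the stratum of $\sigma_\Theta$ is a single orbit whose dense torus
\[
T_\Theta \;=\; \mathbb{G}_m\otimes\bigl(\mathbb{Z}^{E(\Gamma)}/N_{\sigma_\Theta}\bigr),\qquad N_{\sigma_\Theta}=\mathbb{Z}^{E(\Gamma)}\cap\operatorname{span}\sigma_\Theta,
\]
has cocharacter rank $|E(\Gamma)|-|C|-|E(T)|$. Writing $E_{\mathrm{nc}}=E(\Gamma)\setminus C$ for the positive-slope edges, this is $|E_{\mathrm{nc}}|-|E(T)|$, and conceptually $T_\Theta$ encodes the relations $t_e^{s_e}=t_{\bar e}$ among the node-smoothing parameters imposed by the slopes along the edges $e$ lying over a target edge $\bar e$. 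A dimension count confirms the rank: one has $\dim t\mathsf{DR}_{[\Gamma\to T]}=(3g-3+n)-|C|-|E(T)|$, since the stratum has codimension $\dim\sigma_\Theta$, while $\dim\prod_u t\mathsf{DR}(u)=(3g-3+n)-|E(\Gamma)|$, so the relative dimension of $\mathsf{cut}$ is exactly $|E_{\mathrm{nc}}|-|E(T)|=\dim T_\Theta$.

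Third I would exhibit the $T_\Theta$-action on $t\mathsf{DR}_{[\Gamma\to T]}$ and show it is free with quotient $[\prod_u t\mathsf{DR}(u)/\mathsf{Aut}_\Theta]$. The torus acts by rescaling the relative logarithmic positions of the target levels---this is precisely the residual action of the big torus on the orbit $O_{\sigma_\Theta}$---and it visibly preserves the cut datum (the isomorphism class of each component rubber map) while permuting the assemblies. Freeness and transitivity on fibres are then the orbit--stabiliser statement for $O_{\sigma_\Theta}$, transported along the strict smooth map to $\Mbar$, and the uniformity of $T_\Theta$ in the point of the base (it depends only on $\Theta$) upgrades the generic statement to a global torsor, including over the deeper boundary where the $C_u$ degenerate.

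The main obstacle is the bookkeeping of scalings in this last step: each factor $t\mathsf{DR}(u)$ already quotients by its own rubber $\mathbb{G}_m$, whereas the connected rubber map is taken modulo the level torus $\mathbb{G}_m^{V(T)}$, and one must check that the net residual action is free and transitive rather than merely of the right dimension. Concretely, the contracted edges impose point-matching conditions that consume scalings, while the positive-slope edges contribute the genuine torus directions through the slope-compatibility relations; the content of the lemma is that these two effects combine to leave precisely the free $T_\Theta$-action. This is where the gluing formula does the real work, and where I would concentrate the effort, disposing of the $\mathsf{Aut}_\Theta$-quotient and the (finite, hence $\QQ$-harmless) gerbe structure arising from the automorphisms of the expanded target along the way.
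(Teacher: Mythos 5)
Your dimension count is correct and agrees with the answer in the paper (the fibre has dimension $\sum_{e\in E(T)}(r_e-1)=|E_{\mathrm{nc}}|-|E(T)|$), and the paper itself notes that ``an essentially combinatorial proof'' along the lines you sketch is possible. But as written there is a genuine gap, concentrated exactly where you defer the work. First, the reduction in your second step is not available: $\mathsf{cut}$ is \emph{not} the strict pullback of a morphism of cone stacks along $\Mbar_{g,n}\to a^\star\cM^{\trop}_{g,n}$, because its target $\prod_u t\mathsf{DR}(u)$ is a product of moduli spaces of smaller curves (including $(-1)$-dimensional stacky factors for the $2$-valent genus-$0$ vertices of the semistable graph $\Gamma$), and the clutching maps relating this product to the boundary of $\Mbar_{g,n}$ are not strict. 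Relatedly, the ``big torus'' of your local model $\RR_{\geq 0}^{E}$ does not act globally on the modification of $\Mbar_{g,n}$: what exists globally is not a torus action on the stratum but a canonical torus \emph{torsor} over it, built from the node-smoothing line bundles $\mathcal O(e_i)=\Psi_{e_i,1}^{-1}\otimes\Psi_{e_i,2}^{-1}$. An orbit--stabiliser argument in a chart therefore does not ``transport'' to a global torsor structure without exhibiting these bundles, which is precisely what the paper's proof does and your sketch does not.

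Second, you misidentify the fibre directions. The relative positions of the target levels are the coordinates $\ell(\bar e)$, $\bar e\in E(T)$, \emph{on} the cone $\sigma_\Theta$; they are normal to the stratum and lie in the subtorus you quotient out, not in the residual torus. The actual fibre of $\mathsf{cut}$ consists of the choices of isomorphisms $\mathcal O(e_i)^{\otimes w_i}\cong\mathcal O(e_j)^{\otimes w_j}$ for source edges $e_i,e_j$ lying over a common target edge: the subdivision $\DR(\sV)^{\trop}\hookrightarrow\cM_{g,n}^{\trop}$ equates the weighted edge lengths $w_i\ell(e_i)$ in the characteristic sheaf, and lifting these identities to the logarithmic structure is a torsor under $\bigoplus_{i<r_e}\mathcal O(w_ie_i-w_{r_e}e_{r_e})^{\times}$ for each target edge. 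Your closing paragraph correctly flags that ``the bookkeeping of scalings'' is the main obstacle and says the gluing formula should do the real work there --- but that bookkeeping \emph{is} the lemma; without constructing the action and verifying freeness and transitivity on fibres (in particular over the deeper boundary of each $t\mathsf{DR}(u)$, where the cotangent lines $\Psi_\star$ of the semistable curve differ from those of the stable curve), the argument establishes only that source and target have the right relative dimension, not that $\mathsf{cut}$ is a torsor.
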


The source and target of $\mathsf{cut}$ are both determined by purely tropical data, and experts in toric geometry will note that it is possible to give an essentially combinatorial proof of this lemma. We record a more geometric proof. 

\begin{proof}
We will pass to $\mathsf{Aut}_\Theta$ covers first, thereby replacing the moduli stratum $t\mathsf{DR}_{[\Gamma\to T]}$ with a cover $t\widetilde{\mathsf{DR}}_{[\Gamma\to T]}$. The cover parameterizes curves equipped a rubber combinatorial type $[\Gamma'\to T']$ \underline{together} with the data of a contraction of $\Gamma'\to T'$ to a fixed cover $[\Gamma\to T]$. 

We describe the product space first. A point in this product yields a nodal curve associated to each vertex $u$ in the graph $\Gamma$. The marked points on this curve correspond to the flags of edges in $\Gamma$ leaving $u$. If $u$ is adjacent to $u'$ via an edge $e$, we glue the curves associated to $u$ and $u'$ at the points corresponding to the flags. We have passed to the cover, so the flags may be unambiguously identified. We repeat this for all edges of $T$. By construction, the curve associated to each factor admits a unique rubber combinatorial type. The types are compatible, in the sense that if there are flags emanating from $u$ and $u'$ that together form an edge in $\Gamma$, the slopes on these flags are equal\footnote{Longtime readers of Gromov--Witten theory will notice that this is essentially the predeformability condition.}. There is a universal \textit{glued} curve 
\[
\mathcal C^{\mathsf{glued}}\to\prod_{u\in V(\Gamma)} t\mathsf{DR}(u)
\]
At each point of the moduli space, there is a well-defined rubber combinatorial type, together with a contraction to the fixed cover $[\Gamma\to T]$. Note that the glued curve is \textit{semistable}, and it may have $2$-pointed $\PP^1$ components, or ``trivial bubbles''. 

Fix an edge $e$ of $T$ and let $e_1,\ldots e_r$ be the edges of $\Gamma$ that map to $e$ in $T$, and each edge $e_i$ consists of two flags $f_{e_i,1}$ and $f_{e_i,2}$. Suppose the vertices bearing these flags are $u_1$ and $u_2$. Each flag determines a line bundle on the moduli space $t\DR(u_i)$, namely the cotangent line bundle $\Psi_{e_i,j}$ of the corresponding universal semistable curve at this marked point, where $j$ is either $1$ or $2$. On the product moduli space, we pullback and continue to denote these line bundles $\Psi_{e_i,j}$. The line bundle $\Psi^{-1}_{e_i,1}\otimes \Psi^{-1}_{e_i,2}$ is denoted $\mathcal O(e_i)$. 

We examine the difference between this product space and the covered stratum $t\widetilde{\mathsf{DR}}_{[\Gamma\to T]}$. This latter space is obtained by a modification induced by the tropical inclusion
\[
\DR(\sV)^{\trop}\hookrightarrow\cM_{g,n}^{\trop}.
\]
Fix an edge $e$ of the target graph $T$, and let $e_1,\ldots e_r$ be the edges of $\Gamma$ mapping to $e$. Let $w_i$ be the slope along $e_i$. Given a point of the product space above, there is a canonically associated glued curve with a rubber combinatorial type. However, in order to produce a point of $t\widetilde{\mathsf{DR}}_{[\Gamma\to T]}$, we must also constrain the logarithmic structure at the nodes. In order to see this, note that the weighted lengths of these corresponding edges are always equal in (the $\Theta$-cone of) $\DR(\sV)^{\trop}$. These edge lengths are piecewise linear functions on the tropical moduli space, or equivalently, elements of the characteristic sheaf of the logarithmic structure. Unwinding the manner in which $\DR(\sV)^{\trop}$ determines a birational modification of $\Mbar_{g,n}$, to produce an element of $t\widetilde{\mathsf{DR}}_{[\Gamma\to T]}$ we must choose an isomorphism between the line bundles $\mathcal O(e_i)^{\otimes w_i}$ for different $i$ whenever the elements of the characteristic are equated. 

We repeat the analysis for all edges of $T$; this exhibits the morphism
\[
\mathsf{cut}: t\widetilde{\mathsf{DR}}_{[\Gamma\to T]}\to \prod_{u\in V(\Gamma)} t\mathsf{DR}(u)
\]
as the total space of direct sum of torsors. Specifically, for each edge $e$ of $T$ with edges  $e_1,\ldots e_r$ of $\Gamma$ mapping to it, we form the total space of the torsors
\[
\mathcal O(w_1\cdot e_1-w_r\cdot e_r)\oplus\cdots\oplus \mathcal O(w_{r-1}e_{r-1}-w_r\cdot e_r).
\]
Taking another direct sum over all edges $e$ of $T$, we obtain a space isomorphic to $t\widetilde{\mathsf{DR}}_{[\Gamma\to T]}$, and this proves the lemma. 
\end{proof}

There is a map equipped with a perfect obstruction theory
\[
\DR(\sV)\to t\DR(\sV),
\]
and since the inclusion $t\mathsf{DR}_{[\Gamma\to T]}\hookrightarrow t\DR(\sV)$ is a regular embedding, the pullback of the virtual class gives a refined cycle of codimension $g$ on $t\mathsf{DR}_{[\Gamma\to T]}$ supported on a compact subset. We denote it $\mathsf{DR}_{[\Gamma\to T]}^{\mathsf{vir}}$. The induction will require pushing forward this refined class to compactifications of $t\mathsf{DR}_{[\Gamma\to T]}$. In order to induct, we relate this to a refined class on $\prod_{u\in V(\Gamma)} t\mathsf{DR}(u)$ and compactifications of it. 

The torus bundle $t\mathsf{DR}_{[\Gamma\to T]}$ contains a compact subset and class on, namely the virtual class $\mathsf{DR}_{[\Gamma\to T]}$ of the stratum. Our main argument will be to study this class on pushforwards to compactifications of $t\mathsf{DR}_{[\Gamma\to T]}$. The study will be inductive, using the fact that $t\mathsf{DR}_{[\Gamma\to T]}$ is a bundle over a product of simpler spaces. This requires a compactification of the map $\mathsf{cut}$ above.

\begin{construction}[A compactification of $\mathsf{cut}$]\label{compactify-cut}
The proof of the lemma shows that the morphism 
\[
\mathsf{cut}: t\widetilde{\mathsf{DR}}_{[\Gamma\to T]}\to \prod_{u\in V(\Gamma)} t\mathsf{DR}(u)
\]
is the total space of a direct sum of $\mathbb G_m$-torsors. Each torsor in the direct sum was obtained as a tensor product of cotangent line bundles at the marked points. Fix a marked point $\star$ and let $\mathbb L_\star$ denote the cotangent line bundle on the universal \textit{stable curve}. On the locus of non-broken curves in $t\mathsf{DR}(u)$ the line bundle $\mathbb L_\star$ coincides with the cotangent line bundle $\Psi_\star$. By the standard comparison of cotangent lines, the difference $\mathbb L_\star\otimes\Psi_\star^{-1}$ is the divisor associated to a linear combination of boundary divisors on $t\mathsf{DR}(u)$. Specifically, it is the sum of boundary divisors where the marked point $\star$ is supported on an unstable component of the universal semistable curve. 

Choose compactifications of each factor in $\prod_{u\in V(\Gamma)} t\mathsf{DR}(u)$ to form $\prod_{u\in V(\Gamma)} \Mbar(u)$. Each $t\mathsf{DR}(u)$ is birational to (a finite group quotient of) an appropriate moduli space of curves, with genus and number of marked points determined by $u$. We require the compactification to be a proper toroidal modification of the space of stable curves corresponding to these data. 

For the marked point $\star$, we choose a Cartier divisor on the compactification that restricts to $\mathbb L_\star\otimes\Psi_\star^{-1}$. Since each line bundle extends, the torus torsor also extends to the compactification $\prod_{u\in V(\Gamma)} \Mbar(u)$. Since the source of $\mathsf{cut}$ is a contained in a direct sum of line bundles that extend to the compactification in the base direction, we compactify the fiber directions of the map via the projective completion of these line bundles to obtain 
\[
\mathsf{kut}: \widetilde{\cM}_{[\Gamma\to T]}\to \prod_{u\in V(\Gamma)} \Mbar(u).
\]
By construction, this coincides with $\mathsf{cut}$ after restricting to $\prod_{u\in V(\Gamma)} t\mathsf{DR}(u)$ in the base directions and the tori in each fiber direction. 
\qed
\end{construction}

Write $[\mathsf{DR}_{[\Gamma\to T]}]^{\mathsf{push}}$ pushforward to $\widetilde{\cM}_{[\Gamma\to T]}$ of the virtual class of the stratum corresponding to $[\Gamma\to T]$. We write $\left[\prod_{u\in V(\Gamma)} \mathsf{DR}(u)\right]^{\mathsf{push}}$ for the factorwise double ramification cycle determined by each vertex. The following proposition shows that the split class can be used to recover the class of the stratum. 

\begin{proposition}\label{prop: projective-bundle-tautological}
Let $H$ denote the operational class of the fiberwise hyperplane in the bundle $\widetilde{\cM}_{[\Gamma\to T]}$. There is an equality of classes on $\widetilde{\cM}_{[\Gamma\to T]}$ given by
\[
[\mathsf{DR}_{[\Gamma\to T]}]^{\mathsf{push}} = \lambda\cdot H^k\cap \mathsf{kut}^\star\left[\prod_{u\in V(\Gamma)} \mathsf{DR}(u)\right]^{\mathsf{push}}
\]
where $\lambda$ is a nonzero rational number and $k$ is the fiber dimension of $\mathsf{kut}$. 
\end{proposition}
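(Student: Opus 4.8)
The plan is to work on the $\mathsf{Aut}_\Theta$-cover, where the morphism $\mathsf{cut}$ is an honest torus torsor and its compactification $\mathsf{kut}$ is a genuine projective bundle; the passage from the cover back down to $t\mathsf{DR}_{[\Gamma\to T]}$ contributes only the factor $1/|\mathsf{Aut}_\Theta|$ to the rational constant $\lambda$, so I may freely pass to the cover. There the strategy is to analyze the virtual class $\mathsf{DR}_{[\Gamma\to T]}^{\mathsf{vir}}$, which is the restriction of $[\DR(\sV)]^{\mathsf{vir}}$ along the regular embedding of the stratum, by splitting the Abel--Jacobi obstruction theory into a ``vertex part'' and a ``toric part'' dictated by the gluing formula, and then to translate this splitting through the projective bundle geometry of Construction~\ref{compactify-cut}.

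First I would record the splitting of the virtual class. Over the stratum of type $[\Gamma\to T]$ the relative Picard group of the universal curve is a semiabelian group whose abelian part is the product of the vertex Jacobians $\prod_u \mathsf{Pic}^0$ and whose toric part is the $\mathbb{G}_m^{h^1(\Gamma)}$ recording the combinatorial cohomology of the graph; this toric part is exactly the torus whose torsor is the fiber of $\mathsf{cut}$, so that $k=h^1(\Gamma)$. The double ramification condition that $\mathcal O_C(\sV\cdot \underline p)$ be trivial therefore decomposes: on each vertex it is the vertex-level Abel--Jacobi condition, and on the combinatorial part it is the vanishing of the induced class in $\mathbb{G}_m^{k}$. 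By the gluing formula in rubber relative Gromov--Witten theory, the vertex-level conditions assemble, after pullback along $\mathsf{cut}$, into $\mathsf{cut}^\star\left[\prod_{u\in V(\Gamma)} \mathsf{DR}(u)\right]^{\mathsf{vir}}$, of codimension $\sum_u g(u)$; the combinatorial condition is a codimension-$k$ condition cutting the torus directions down to the identity section, and it is precisely this that accounts both for the remaining codimension $g-\sum_u g(u)=k$ and for the compactness of the support.

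I would then carry out the projective bundle computation. Under $\mathsf{kut}$ the torus directions are projectivized via the line bundles $\mathcal O(e_i)$, whose fiberwise projective completions carry the relative hyperplane class $H$. The identity section of the $\mathbb{G}_m^k$-torsor lands in the torus, away from the boundary of the bundle, and in each projectivized factor the identity point of $\mathbb{G}_m$ is rationally equivalent to the fiberwise point class; hence the closure of the identity section represents $H^{k}\cap\mathsf{kut}^\star(-)$, the weights $w_i$ entering through the identification of $\mathcal O(e_i)$ with $\mathcal O(1)$ and thereby entering $\lambda$. Capping with the vertex part and pushing the compactly supported class forward to $\widetilde\cM_{[\Gamma\to T]}$ then yields, by the projection formula,
\[
[\mathsf{DR}_{[\Gamma\to T]}]^{\mathsf{push}} = \lambda\cdot H^{k}\cap \mathsf{kut}^\star\left[\prod_{u\in V(\Gamma)} \mathsf{DR}(u)\right]^{\mathsf{push}},
\]
with $\lambda\neq 0$. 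Uniqueness of this expression, and the vanishing of lower powers of $H$, would be guaranteed by the projective bundle formula: applying $\mathsf{kut}_\star$ after capping with powers of $H$ isolates each Leray--Hirsch component, and only the top component survives because the support meets each fiber in the balanced locus.

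The main obstacle is the middle step: pinning down the gluing formula in this logarithmic rubber setting precisely enough to see that the combinatorial, toric part of the Abel--Jacobi condition contributes exactly $H^k$ --- neither more nor fewer powers of $H$ --- and to fix the nonzero constant $\lambda$ in terms of the edge weights $w_i$ and $|\mathsf{Aut}_\Theta|$. This requires matching the node $\psi$-class (predeformability) contributions of the gluing formula with the divisor classes chosen in Construction~\ref{compactify-cut}, and checking that the chosen Cartier extensions of $\mathbb L_\star\otimes\Psi_\star^{-1}$ are exactly what make the identity section represent $H^k$ rather than $H^k$ plus lower-order boundary corrections.
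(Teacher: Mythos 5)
Your overall architecture matches the paper's: pass to the $\mathsf{Aut}_\Theta$-cover, use the gluing formula to identify the image of the restricted $\mathsf{kut}$ with the external product $\prod_u\mathsf{DR}(u)$, realize the stratum class as a section of the torus torsor over that product, compute the class of the section in the projective completion, and finish with the projection formula. The genuine gap is exactly the step you flag at the end, and it is not a technicality that can be deferred. First, a $\mathbb G_m^k$-torsor has no identity section; the substack of $\mathsf{cut}^{-1}\bigl(\prod_u\mathsf{DR}(u)\bigr)$ cut out by the remaining Abel--Jacobi conditions is a priori just some closed substack, and the assertion that it is a \emph{section} of the torsor is precisely what must be extracted from the gluing formula. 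The paper does this by citing Chen's results (\cite[Lemma~7.9.1]{CheDegForm} and \cite[Eqn.~7.9.2]{CheDegForm}): the restricted map is finite \'etale onto $\prod_u\mathsf{DR}(u)$ and admits an orbifold section there, so the torsor --- and hence the projective bundle --- becomes \emph{trivial after restriction to the DR locus and a further finite cover}. Second, even granting a section, your key computation fails as stated: ``the identity point of $\mathbb G_m$ is rationally equivalent to the fiberwise point class'' is a fiberwise statement that does not globalize. The class of a section of $\mathbb P(\mathcal O\oplus L)\to B$ is $H$ plus multiples of $\pi^\star c_1(L)$, so for a nontrivial bundle a section is never of the form $\lambda H^k\cap\mathsf{kut}^\star(-)$. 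The Leray--Hirsch argument you offer for the vanishing of the lower powers of $H$ does not work either: the support lying in the open torus part of each fiber places no constraint on the lower components. The lower-order corrections disappear only because the bundle is trivial over the support, which is the content of Chen's orbifold section; this is the missing ingredient, not a normalization to be checked at the end.

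Two smaller points. The identification of the fiber torus with the toric part $\mathbb G_m^{h^1(\Gamma)}$ of the semiabelian Picard group, and hence the equality $k=h^1(\Gamma)$, needs justification: the torsors in Construction~\ref{compactify-cut} are indexed by one relation for each pair of edges of $\Gamma$ over a common edge of $T$ (so $\sum_{e\in E(T)}(r_e-1)$ of them), not by a basis of $H^1(\Gamma)$, and contracted edges and the structure of the fibers of $\Gamma\to T$ over vertices enter the comparison; the equality $k=g-\sum_ug(u)$ is forced by the dimension count in the statement, but it is not automatic from ``the toric part of $\mathsf{Pic}$.'' Finally, the compatibility of the obstruction theory on the section with the factorwise obstruction theory on $\prod_u\mathsf{DR}(u)$ --- which you use implicitly when you say the vertex conditions ``assemble'' into $\mathsf{cut}^\star$ of the product of virtual classes --- is itself a citation in the paper (\cite[Section~4.10]{CheDegForm}) rather than a formal consequence of the semiabelian splitting. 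If you replace your fiberwise rational-equivalence step with the triviality of the torsor over the DR locus, your argument becomes essentially the paper's.
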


In the following proof,  we use results from Chen's treatment of the degeneration formula for logarithmic expanded degenerations~\cite[Section~7]{CheDegForm}. Our setting is slightly different, in that we describe the boundary of the space of maps to a rubber target, rather than a stratum in a degeneration of the moduli space. The precise results we need apply without change in our setting.

\begin{proof}
By the gluing formula in relative Gromov--Witten theory, after passing to a finite cover to kill the automorphisms, the arrow
\[
\widetilde{\mathsf{DR}}_{[\Gamma\to T]}\to \prod_{u\in V(\Gamma)} \Mbar(u),
\]
obtained by restricting $\mathsf{kut}$, is finite and \'etale over its image and its image is precisely the external product $\prod_{u\in V(\Gamma)} \mathsf{DR}(u)$. This result follows immediately from~\cite[Lemma~7.9.1]{CheDegForm}. We examine the projective bundle obtained by pulling back the bundle from Construction~\ref{compactify-cut} to this product $\prod_{u\in V(\Gamma)} \mathsf{DR}(u)$. Note that the space $\widetilde{\mathsf{DR}}_{[\Gamma\to T]}$ is naturally contained in this bundle.

We claim the bundle becomes trivial after replacing the base with a finite cover which we denote $\prod^\sim_{u\in V(\Gamma)} \mathsf{DR}(u)$. Indeed, on restricting to this locus, the morphism admits an orbifold section by~\cite[Eqn. 7.9.2]{CheDegForm}, and therefore the torsor is trivial on this locus; see also~\cite[Remark~3.4]{GV05}. The natural obstruction theory on $\prod^\sim_{u\in V(\Gamma)} \mathsf{DR}(u)$ pulls back to the obstruction theory on the section given by $\mathsf{DR}_{[\Gamma\to T]}$; this follows by the same argument presented in~\cite[Section~4.10]{CheDegForm}. Therefore, on this trivial projective bundle, the virtual class of the section is equal to the pullback of the virtual class on $\prod^\sim_{u\in V(\Gamma)} \mathsf{DR}(u)$ times a power of the fiberwise hyperplane class. The result follows from the projection formula. 
\end{proof}

\subsection{Computing virtual strict transforms}\label{sec: virtual-strict-transforms} The class $[\DR(\sV)]$ in the Chow cohomology of $\Mbar_{g,n}$ is known to be tautological, either by the result of Faber and Pandharipande~\cite{FP} or via the explicit description in terms of the standard basis of the tautological ring~\cite{JPPZ}. We compute $[\DR^\lightning(\sV)]$ on a sufficient blowup of the moduli space of curves and show that it lies in the tautological ring. 

The class $[\DR^\lightning(\sV)]$ can be regarded as a \textit{virtual strict transform} of the refined cycle $[\DR(\sV)]$ in $\Mbar_{g,n}$ in the following sense. We begin with a morphism 
\[
t\DR(\sV)\to \Mbar_{g,n}
\]
and examine its pullback under any sufficient projective birational modification $\Mbar_{g,\sV}$, which we assume to be smooth. This modification is given by the blowup of a monomial ideal sheaf, in the sense discussed previously. The pullback of $t\DR(\sV)$ is the \textit{total transform} of the morphism. 

There is a well-defined \textit{strict transform} which is obtained by pulling back the ideal sheaf above to $t\DR(\sV)$ and blowing it up there. But the ideal sheaf is monomial and therefore pulled back from the Artin fan of $\Mbar_{g,n}$, so this strict transform is obtained by first subdividing $\Mbar_{g,n}$, then pulling back the subdivision to $\DR(\sV)^{\trop}$, and performing the associated birational modification on $t\DR(\sV)$. Since the blowup is sufficiently fine, the morphism from this modification of $t\DR(\sV)$ to $\Mbar_{g,\sV}$ is the inclusion of an open substack. 

There are now three spaces in play: the space $t\DR(\sV)$, its strict transform, and its total transform. The first two are pure of dimension $3g-g+n$, while the latter is not pure, but is equipped with an excess class in homological degree $3g-3+n$ by Fulton's \underline{refined} Gysin pullback applied to the local complete intersection morphism
\[
\Mbar_{g,\sV}\to \Mbar_{g,n},
\]
see~\cite[Section~6.6]{Ful84}. The vector $\sV$ determines Abel--Jacobi sections on all three spaces, and we can perform refined intersection for each one with the $0$ section of the universal Picard group. We obtain the \textit{virtual strict transform} and \textit{virtual total transform}. By definition, after pushing forward to $\Mbar_{g,\sV}$, the virtual total transform is the class of the Chow cohomological pullback under the blowup and is known~\cite{FP,JPPZ}. We compute the virtual strict transform. 

The situation is complicated by the fact that a blowup along a monomial ideal is singular and difficult to control in intersection theory, and we instead argue inductively along a factorization. 

\noindent
{\bf Factorization.} As a first step, we require a sequence of subdivisions and root constructions
\[
\cM^{\trop}_{g,n}\langle m\rangle \to \cM^{\trop}_{g,n}\langle m-1 \rangle \to \cdots \to \cM^{\trop}_{g,n}\langle 0\rangle =\cM^{\trop}_{g,n}
\]
with the following properties: (i) each arrow is a blowup at a stack theoretically smooth center. (ii) the pullback of $\DR^{\trop}(\sV)\to \cM^{\trop}_{g,n}$ to $\cM^{\trop}_{g,n}\langle m\rangle$ is weakly semistable. 

\begin{lemma}
A sequence of blowups with the properties above exists. 
\end{lemma}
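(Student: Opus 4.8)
The plan is to recognize the statement as an instance of combinatorial weak semistable reduction, applied to the morphism of cone stacks $\DR^{\trop}(\sV)\to \cM^{\trop}_{g,n}$, with the extra bookkeeping that we modify only the base and that every modification is a smooth-center star subdivision or a root construction. Since the map $\DR^{\trop}(\sV)\to \cM^{\trop}_{g,n}$ is representable by cone complexes, I would first pass to a cover by honest cone complexes and perform the construction equivariantly, so that the resulting subdivisions and root constructions descend to the cone stack. All intermediate models $\cM^{\trop}_{g,n}\langle i\rangle$ are kept nonsingular throughout, which is exactly property (i).

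The first substantive step is equidimensionalization. Recall that $\DR^{\trop}(\sV)$ is, up to the rubber-type refinement fixed in Section~\ref{sec: tropical-DR}, a subdivision of a subcomplex of $\cM^{\trop}_{g,n}$, carrying the smooth integral structure chosen there. I would select a projective subdivision of $\cM^{\trop}_{g,n}$ that is nonsingular and that, over the support of $\DR^{\trop}(\sV)$, restricts to a refinement of its rubber-type cone structure; such a subdivision is produced by forming the common refinement with $\DR^{\trop}(\sV)$ and then resolving, as in the flattening step of~\cite{AK00,Mol16}. After this base subdivision, each cone of the pulled-back $\DR^{\trop}(\sV)$ is a cone of the base and maps isomorphically onto it, so the induced morphism is equidimensional --- the combinatorial incarnation of flatness with equidimensional fibers, via the toroidal criteria of~\cite[Section~4]{AK00}.

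The delicate point, and the one I expect to be the main obstacle, is the reduced-fibers condition: along each cone the source lattice must surject onto the lattice of its image cone in the base. This can fail precisely because the integral structure on $\DR^{\trop}(\sV)$ was passed to a sublattice, the slopes $w_i$ along the edges of the semistable source graph producing finite lattice indices. I would repair this by root constructions along the boundary divisors of the base, to the orders dictated by these indices, refining the base lattice until the saturation condition $N_{\mathrm{source}}\twoheadrightarrow N_{\mathrm{base}}$ holds. This is exactly the universal weak semistabilization of~\cite{Mol16}, whose salient feature is that it proceeds by modifying only the base and never alters or covers the source $\DR^{\trop}(\sV)$; since a root construction is a modification at a stack-theoretically smooth center, property (i) is preserved.

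Finally, I would assemble the construction. The chosen nonsingular projective subdivision factors as a sequence of star subdivisions at smooth cones by toroidal weak factorization~\cite{AKMW}, that is, a sequence of blowups at smooth centers, and the root constructions are interleaved as further smooth-center stacky modifications. Because weak semistability is stable under further subdivision, it is enough that the final model be sufficiently fine, so there is no tension among the various refinements, and the composite is the desired tower $\cM^{\trop}_{g,n}\langle m\rangle\to\cdots\to\cM^{\trop}_{g,n}$.
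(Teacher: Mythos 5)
Your proposal is correct and follows essentially the same route as the paper: both arguments come down to producing \emph{some} subdivision (plus root constructions for the lattice/reducedness condition) over which the pullback of $\DR^{\trop}(\sV)$ becomes weakly semistable, and then invoking the fact that such subdivisions can be refined by a sequence of stellar subdivisions at smooth cones --- the paper compresses this into a barycentric-subdivision step followed by the cofinality of stellar subdivisions, while you unpack the flattening and saturation steps via~\cite{AK00,Mol16}. One small correction: the claim that the chosen nonsingular subdivision ``factors as'' a sequence of smooth star subdivisions is not what weak factorization~\cite{AKMW} gives (that is a zig-zag statement); what you need, and what the paper uses, is that smooth stellar subdivisions are \emph{cofinal} among all subdivisions, which is harmless here since, as you note, weak semistability is stable under further refinement.
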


\begin{proof}
Sequentially blowup smooth strata of $\Mbar_{g,n}$ until the irreducible components of the boundary are each smooth and non-empty intersections of these irreducible strata are connected; note that this can be achieved by two barycentric subdivisions, see for instance~\cite[Section~4.6]{ACMW}. Once this is done, we use the fact that sequences of stellar subdivisions of a fan are cofinal in the system of all subdivisions. The result follows. 
\end{proof}

As our goal is to describe the difference between the virtual strict and virtual total transforms, we begin by describing this difference on the base of the obstruction theories. 

\subsection{The first blowup: subdivision} Consider the diagram
\[
\begin{tikzcd}
t\DR(\sV)\langle 1\rangle \arrow[hookrightarrow]{r}& t\DR(\sV)^{\mathsf{tot}}\arrow{d}\arrow{r} \drar[phantom, "\square"] & t\DR(\sV)\arrow{d}{\kappa}\\
&\Mbar_{g,n}\langle 1\rangle\arrow{r}{\nu}& \Mbar_{g,n}, 
\end{tikzcd}
\]
where the middle square is cartesian and defines the total transform, while $t\DR(\sV)\langle 1\rangle$ is the strict transform. Recall that the space $t\DR(\sV)$ is an open in a birational modification of $\Mbar_{g,n}$ and contains the double ramification space as a closed and proper substack (we do not yet involve the space $\DR(\sV)$ itself). In particular, aside from $t\DR(\sV)\langle 1 \rangle$, the remaining four spaces in the diagram are birational to each other. As explained in Section~\ref{sec: calculation-strategy} there is still very nontrivial refined intersection theory on the total transform!

The morphism $\nu$ is a blowup morphism and its source and target are smooth Deligne--Mumford stacks. It is therefore a local complete intersection morphism. There is a refined Gysin pullback $\nu^!$ which we apply to obtain
\[
\nu^!\left[t\DR(\sV)\right]-\left[t\DR(\sV)\langle 1\rangle \right] = [\underline{\textsf{correction}}]. 
\]
The term $[\underline{\textsf{correction}}]$ is supported on the exceptional divisor. It is described by Fulton's blowup formula in its refined version~\cite[Example~6.7.1]{Ful84}. Let $Z\langle 1\rangle$ be the center of the blowup morphism $\nu$. Denote the exceptional divisor by $\mathbb P\langle 1\rangle$. It is the projectivized normal bundle of the center, and can examine the bundle map:
\[
\mu: \mathbb P\langle 1\rangle \to Z\langle 1\rangle. 
\]
In Fulton's formula, the $[\underline{\textsf{correction}}]$ term above is pushed forward from $\mathbb P\langle 1\rangle\times_{\Mbar_{g,n}\langle1\rangle} \kappa^{-1}Z\langle 1 \rangle$ to the total transform. The class on $\mathbb P\langle 1\rangle$ is the expected dimensional, that is homological degree $3g-3+n$. Precisely, it is the degree $3g-3+n$ piece in the product of the following two terms:

\begin{description}
\item[Chern] The Chern class of the excess normal bundle -- the quotient of the normal bundle of $Z\langle 1\rangle$ by the fiberwise hyperplane in $\mathbb P\langle 1\rangle$. 
\item[Segre] The Segre class of the scheme theoretic preimage $\kappa^{-1}(Z\langle 1\rangle)$ in $t\DR(\sV)$. 
\end{description}

Let us interpret the product. The support of the Segre term is, by definition, contained in the fiber product $Z\langle 1\rangle \times_{\Mbar_{g,n}} t\DR(\sV)$. In a moment, we will explain how to handle this Segre class term using Aluffi's formula. For now, we blackbox the class. This fiber product maps to the center $Z\langle 1\rangle$ of the blowup, and therefore we can pullback the projective bundle $\mu$ to this fiber product. Now apply flat pullback to this Segre class term, along the bundle, and then apply Chern classes of the excess bundle. The result of a process is the correction term. Note that as yet, there are still no virtual class considerations. 

We come to the Segre class term and Aluffi's formula. A stratum in any logarithmic scheme is cut out by a monomial ideal sheaf. This class of ideals is stable under pullback by logarithmic morphism. The center of the blowup $Z\langle 1\rangle$ is such a stratum. It follows that the Segre class term $s(\kappa^{-1}Z\langle 1\rangle, t\DR(\sV))$ is given by a monomial ideal. We can replace $t\DR(\sV)$ with a logarithmic modification without changing the problem, and therefore we can assume that $t\DR(\sV)$ is a simple normal crossings pair, i.e. tropically smooth. 

Since the Segre class on a logarithmically and tropically smooth space comes from piecewise polynomials, we have the following.

\begin{corollary}
The \emph{\underline{\textsf{correction}}} term comes from piecewise polynomials. Explicitly, it is a sum of classes that are obtained by the following three steps:
\begin{enumerate}[(i)]
\item Choose a closed stratum $W\subset t\DR(\sV)$ mapping to the center $Z\langle 1\rangle$ of the blowup.
\item Pullback the projective bundle $\mu$ to $W$ and obtain a projective bundle $\mathbb P_W\to W$.
\item Apply to the fundamental class of the $\PP_W$ a polynomial in the Chern roots of the normal bundle $N_{Z\langle 1\rangle/\Mbar_{g,n}}$, the Chern class of the relative hyperplane bundle $\mathbb P_W\to W$, and  the Chern roots of the normal bundle $N_{W/t\DR(\sV)}$. 
\end{enumerate}
\end{corollary}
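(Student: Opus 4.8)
The plan is to feed Fulton's refined blowup formula into Aluffi's Segre formula, exactly as in the toric \emph{Synthesis} of Section~\ref{sec: toric-int-thy}, now over the logarithmically smooth base $\Mbar_{g,n}$. First I would apply Fulton's formula (\cite[Example~6.7.1]{Ful84}) to the local complete intersection morphism $\nu:\Mbar_{g,n}\langle 1\rangle\to\Mbar_{g,n}$ with smooth center $Z\langle 1\rangle$, applied to the cycle $\kappa: t\DR(\sV)\to\Mbar_{g,n}$. Writing $X'=\kappa^{-1}Z\langle 1\rangle=Z\langle 1\rangle\times_{\Mbar_{g,n}}t\DR(\sV)$ and letting $g':\widetilde X'\to X'$ denote the projective bundle obtained by pulling $\mu:\mathbb P\langle 1\rangle\to Z\langle 1\rangle$ back over $X'$, this presents the difference as the pushforward to the total transform of the expected dimensional piece
\[
\left\{c(\mathbb E)\cap g'^\star s\left(\kappa^{-1}Z\langle 1\rangle, t\DR(\sV)\right)\right\}_{\mathsf{exp}}.
\]
The entire problem is thereby reduced to understanding the Segre factor $s(\kappa^{-1}Z\langle 1\rangle, t\DR(\sV))$ and how it interacts with the projective bundle $g'$.

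Next I would identify $\kappa^{-1}Z\langle 1\rangle$ as a monomial subscheme of $t\DR(\sV)$. Since $Z\langle 1\rangle$ is a boundary stratum of $\Mbar_{g,n}$ it is cut out by a monomial ideal on the Artin fan, and this class of ideals is stable under the logarithmic morphism $\kappa$; after replacing $t\DR(\sV)$ by a logarithmic modification, which changes neither the pushforward nor the Segre class by birational invariance, I may assume $t\DR(\sV)$ is a regular crossings pair. Aluffi's formula (\cite{Alu13}) then expands $s(\kappa^{-1}Z\langle 1\rangle, t\DR(\sV))$ as a sum of terms indexed by the strata of $t\DR(\sV)$ contained in $\kappa^{-1}Z\langle 1\rangle$ — equivalently the closed strata $W$ mapping to $Z\langle 1\rangle$, which is step (i) — each decorated by a universal polynomial in the Chern classes of the boundary line bundles of $t\DR(\sV)$. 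As the normal bundle of a stratum splits as the sum of the normal bundles of the boundary divisors through it, these decorations are precisely polynomials in the Chern roots of $N_{W/t\DR(\sV)}$.

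It remains to fold in the Chern factor. The excess bundle $\mathbb E$ is the quotient of $\mu^\star N_{Z\langle 1\rangle/\Mbar_{g,n}}$ by the fiberwise tautological line, so its total Chern class is a polynomial in the pulled back Chern roots of $N_{Z\langle 1\rangle/\Mbar_{g,n}}$ and the relative hyperplane class $H$, exactly as in the toric Chern lemma. Pulling $g'$ back over each stratum $W\to Z\langle 1\rangle$ produces the projective bundle $\mathbb P_W\to W$ of step (ii), over which all three families of classes restrict coherently: the roots of $N_{Z\langle 1\rangle/\Mbar_{g,n}}$ via $W\to Z\langle 1\rangle$, the hyperplane class $H$, and the roots of $N_{W/t\DR(\sV)}$ from the Aluffi decoration. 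Since extraction of the expected dimensional piece is linear and $[\,\widetilde X'\,]$ restricts to the flat pullback $[\mathbb P_W]$, multiplying $c(\mathbb E)$ against the Aluffi decoration term by term yields, for each $W$, a polynomial in these three families applied to $[\mathbb P_W]$; this is step (iii). Summing over $W$ gives the stated description. Piecewise-polynomiality of the whole expression is then automatic, since the Segre factor comes from piecewise polynomials by the proposition of Section~\ref{sec: log-intersections} that the Segre class of a monomial subscheme comes from piecewise polynomials, while $c(\mathbb E)$ is pulled back from the smooth Artin fan and hence comes from piecewise polynomials by Theorem~\ref{theorem:Chowpp}.

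The main obstacle is bookkeeping rather than geometry: one must verify that the support of the Segre class maps to $Z\langle 1\rangle$ so that $g'$ genuinely restricts to $\mathbb P_W\to W$ over each stratum, and that the roots of $\mathbb E$ and the roots coming from Aluffi can be realized \emph{simultaneously} as operators on the single class $[\mathbb P_W]$. Both reduce to the functoriality of the projectivized normal bundle under the base change $W\to Z\langle 1\rangle$ together with the universality in Aluffi's formula, so once the pullbacks are correctly arranged the matching of terms with steps (i)--(iii) is immediate.
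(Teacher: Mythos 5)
Your proposal is correct and follows essentially the same route as the paper: the corollary is presented there as an immediate consequence of the preceding paragraphs, which apply Fulton's refined blowup formula to express the correction as the expected-dimensional piece of $c(\mathbb E)$ against the pulled-back Segre class, identify $\kappa^{-1}Z\langle 1\rangle$ as a monomial subscheme (after a harmless logarithmic modification to reach a regular crossings pair), and expand the Segre factor by Aluffi's formula as strata decorated by Chern roots of their normal bundles, with $c(\mathbb E)$ contributing the roots of $N_{Z\langle 1\rangle/\Mbar_{g,n}}$ and the relative hyperplane class. Your term-by-term assembly over each $\mathbb P_W\to W$ and the appeal to the monomial-Segre proposition for piecewise-polynomiality match the paper's argument.
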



\subsection{The first blowup: virtual structure} The preceding arguments account for the purely tropical part of the argument. We now wish to calculate the pushforward of the virtual strict transform of $\DR(\sV)$ to the blowup $\Mbar_{g,n}\langle 1 \rangle$. To achieve this, we apply virtual pullback to \underline{\textsf{correction}} and push it down to $\Mbar_{g,n}\langle 1 \rangle$. We begin by adding a row to our previous diagram
\[
\begin{tikzcd}
\DR(\sV)\langle 1\rangle \arrow[hookrightarrow]{r}\arrow{d} \drar[phantom, "\square"] & \DR(\sV)^{\mathsf{tot}}\arrow{d}\arrow{r} \drar[phantom, "\square"] &\DR(\sV)\arrow{d}{}\\
t\DR(\sV)\langle 1\rangle \arrow[hookrightarrow]{r}& t\DR(\sV)^{\mathsf{tot}}\arrow{d}\arrow{r} \drar[phantom, "\square"] & t\DR(\sV)\arrow{d}{\kappa}\\
&\Mbar_{g,n}\langle 1\rangle\arrow{r}{\nu}& \Mbar_{g,n}. 
\end{tikzcd}
\]
In the middle row, the setup of the previous section gives us an equality:
\[
\nu^!\left[t\DR(\sV)\right]-\left[t\DR(\sV)\langle 1\rangle \right] = [\underline{\textsf{correction}}]. 
\]
The equality is as classes on the total space of the pullback, namely $t\DR(\sV)^{\mathsf{tot}}$.
The vertical arrows are all equipped with obstruction theories, and we pull $[\underline{\textsf{correction}}]$ up along the vertical arrows to obtain a \textit{virtual} {correction}, and push it down to $\Mbar_{g,n}\langle1\rangle$. By compatibility of Gysin maps~\cite[Proposition~6.6]{Ful84} on the top right square, the class of $\DR(\sV)\langle 1 \rangle$ can be computed by pulling back the known formula on $\Mbar_{g,n}$ and subtracting the virtual correction term. 

The term $[\underline{\textsf{correction}}]$ is supported over the blowup center. Furthermore, by Aluffi's formula, it is a sum of Chern class operators applied to strata of $t\DR(\sV)$ that map to the blowup center $Z\langle 1\rangle$. Ley us now choose a single such summand. We may therefore fix a stratum $W$ -- it amonts to choosing a cone in the tropical moduli space of Section~\ref{sec: tropical-DR}. Restricting attention here, we have:
\[
\begin{tikzcd}
\mathsf{DR}_W^{\mathsf{exc}}(\sV)\arrow{r}\arrow{d}{\epsilon} \drar[phantom, "\square"] &\mathsf{DR}_W(\sV)\arrow{d}{\varphi} \\
\mathbb P_W\langle1\rangle \arrow{d}{\tau}\arrow{r} \drar[phantom, "\square"] & W \arrow{d}{\kappa} \\
\mathbb P\langle 1 \rangle \arrow{r}{\varpi} & Z\langle 1 \rangle. 
\end{tikzcd}
\]
The space $\mathsf{DR}_W(\sV)$ is simply the preimage of $W$ in $\DR(\sV)$. The term $\mathsf{DR}_W^{\mathsf{exc}}(\sV)$ is defined by pullback. The key property is that it is a projective space bundle over the stratum $\mathsf{DR}_W(\sV)$. 

Before proceeding, we observe that there are natural classes defined in different parts of this diagram. The center $Z\langle 1 \rangle$ carries the Chern classes of its normal bundle. The exceptional divisor $\mathbb P\langle 1 \rangle$ carries the fiberwise hyperplane class. The stratum $W$ carries cohomology class that comes from piecewise polynomials, according to Aluffi's formula.

\begin{proposition}
The pushforward to $\Mbar_{g,n}\langle 1\rangle$ of the difference of classes
\[
\nu^![\DR(\sV)]-[\DR(\sV)\langle 1\rangle]
\]
is tautological. More precisely, is pushforward of a sum of terms that are supported on the exceptional divisor of $\nu$, with each given by a product of tautological classes with powers of the normal bundle of the exceptional divisor. 
\end{proposition}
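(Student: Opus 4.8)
The plan is to combine the refined blowup analysis of the preceding subsection with the projective bundle description of Proposition~\ref{prop: projective-bundle-tautological}. By the compatibility of refined Gysin pullbacks~\cite[Proposition~6.6]{Ful84} applied to the upper right Cartesian square, the difference $\nu^![\DR(\sV)]-[\DR(\sV)\langle 1\rangle]$ is exactly the virtual pullback of the class $[\underline{\textsf{correction}}]$ along the vertical obstruction theories, pushed forward to $\Mbar_{g,n}\langle 1\rangle$. It therefore suffices to show that this virtual correction is tautological. Since it decomposes as a sum indexed by the closed strata $W=W_{[\Gamma\to T]}$ of $t\DR(\sV)$ lying over the center $Z\langle 1\rangle$, I would treat one stratum at a time, using the diagram with vertical maps $\epsilon,\varphi,\tau,\kappa,\varpi$ and reducing to the analysis of a single decorated stratum cycle.

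For a fixed stratum, the Corollary of the previous subsection presents the contribution as the pushforward along $\tau$ of a polynomial in the Chern roots of $N_{Z\langle 1\rangle/\Mbar_{g,n}}$, the fiberwise hyperplane class, and the Chern roots of $N_{W/t\DR(\sV)}$, applied to the refined virtual cycle $\mathsf{DR}_W(\sV)=\mathsf{DR}^{\mathsf{vir}}_{[\Gamma\to T]}$. The heart of the argument is Proposition~\ref{prop: projective-bundle-tautological}: after pushforward to the projective completion $\widetilde\cM_{[\Gamma\to T]}$, the class $\mathsf{DR}_W(\sV)$ equals $\lambda\cdot H^k$ capped with $\mathsf{kut}^\star$ of an external product of strictly smaller double ramification cycles. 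Because $Z\langle 1\rangle$ is a proper boundary stratum, $\Gamma$ has at least one edge, so each factor $\mathsf{DR}(u)$ pushes forward to an ordinary double ramification cycle on a moduli space $\Mbar(u)$ of strictly smaller dimension $3g(u)-3+n(u)$; these are tautological by Faber--Pandharipande and the formula of~\cite{FP,JPPZ}, and an external product of tautological classes is tautological on $\prod_u\Mbar(u)$.

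It then remains to verify that the decorating classes are tautological and survive pushforward. The hyperplane class $H$ is, by Construction~\ref{compactify-cut}, the class of a Cartier divisor built from the cotangent lines $\mathbb L_\star$ and the comparison divisors $\mathbb L_\star\otimes\Psi_\star^{-1}$, hence a combination of $\psi$-classes and boundary divisors; the normal bundles $N_{Z\langle 1\rangle/\Mbar_{g,n}}$ and $N_{W/t\DR(\sV)}$ are normal bundles to boundary strata, whose Chern roots are expressed through the $\psi$-classes at the nodes and the boundary divisor classes in the standard way, and the piecewise polynomial decorations produced by Aluffi's formula are likewise polynomials in boundary divisors. The compactified map $\mathsf{kut}$ factors through the clutching morphism $\prod_u\Mbar(u)\to\Mbar_{g,n}\langle 1\rangle$ from the boundary. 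Since the tautological ring is closed under pushforward by such gluing maps and under multiplication by $\psi$-, $\kappa$-, and boundary classes, the pushforward of the decorated product cycle to $\Mbar_{g,n}\langle 1\rangle$ is tautological, and summing over the strata $W$ yields the proposition.

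The main obstacle will be the bookkeeping in the last paragraph: one must match the excess normal bundle of Fulton's formula together with $N_{W/t\DR(\sV)}$ --- computed on the logarithmically modified, weakly semistable model $t\DR(\sV)$ --- with genuine $\psi$- and boundary classes pulled back to $\widetilde\cM_{[\Gamma\to T]}$, compatibly with the trivialization of the torsor underlying Proposition~\ref{prop: projective-bundle-tautological}. The subtlety is that these normal directions live on the \emph{semistable} universal curve and involve the root and subdivision data defining $t\DR(\sV)$, so the identification of the relevant normal and cotangent lines with $\psi$-classes at the nodes of the glued curve must be made carefully. Once this dictionary is in place, closure of the tautological ring under boundary pushforward does the rest.
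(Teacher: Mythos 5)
Your opening reduction -- compatibility of Gysin pullbacks on the upper right square, so that the difference is the virtual pullback of $[\underline{\textsf{correction}}]$, decomposed stratum by stratum -- matches the paper exactly. Where you diverge is at the key step. The paper does \emph{not} invoke Proposition~\ref{prop: projective-bundle-tautological} here: it absorbs the piecewise polynomial $p$ and the Chern polynomial $c$ into a single decoration $q$, commutes the hyperplane operators and the $\varpi$-pullback past everything via the projection formula so that the whole computation reduces to pushing forward $\varphi^!(q\cap[W])$, then uses the gluing formula to write the virtual stratum as a product of moduli of relative stable maps, observes that $q$ restricts to a tautological class on each factor, and finishes by a single application of Faber--Pandharipande's Theorem~2 of~\cite{FP}: pushforwards of \emph{tautological decorated} Gromov--Witten classes (including disconnected domains) are tautological. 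That theorem swallows the decorations in one step. Your route instead runs through Proposition~\ref{prop: projective-bundle-tautological} and the tautologicality of the bare smaller DR cycles via~\cite{FP,JPPZ}; this is essentially the architecture the paper reserves for the later inductive theorem over the full tower of blowups, where it is genuinely needed.

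The cost of your choice is exactly the issue you flag in your last paragraph and then defer. Proposition~\ref{prop: projective-bundle-tautological} is an identity on the particular compactification $\widetilde{\cM}_{[\Gamma\to T]}$ of Construction~\ref{compactify-cut}, and it concerns the undecorated virtual class. To conclude, you must push forward the class \emph{decorated} by the piecewise polynomial from Aluffi's formula, the Chern roots of $N_{Z\langle 1\rangle/\Mbar_{g,n}}$ and $N_{W/t\DR(\sV)}$, and the hyperplane class, along a map whose source is a projective bundle over a product of \emph{modifications} of the smaller moduli spaces. Pushing forward ``tautological class times piecewise polynomial'' from a blowup is not automatically tautological -- it is the central difficulty the whole paper is organized around -- so the sentence ``closure of the tautological ring under boundary pushforward does the rest'' is doing unjustified work. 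Also, your claim that $\mathsf{kut}$ ``factors through the clutching morphism $\prod_u\Mbar(u)\to\Mbar_{g,n}\langle 1\rangle$'' has the arrows the wrong way around: $\mathsf{kut}$ maps \emph{to} the product, and the comparison of $\widetilde{\cM}_{[\Gamma\to T]}$ with the actual exceptional locus $\mathbb P\langle 1\rangle$ requires an argument (the paper later uses weak factorization for this). I would recommend either completing the decoration bookkeeping explicitly, or replacing this portion of the argument with the paper's direct appeal to~\cite[Theorem~2]{FP}, which is stated precisely for decorated classes and closes the gap immediately.
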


\begin{proof}
We apply Fulton's blowup formula to $t\DR(\sV)$ as stated in~\cite[Example~6.7.1]{Ful84} to obtain the term $[\underline{\textsf{correction}}]$. We perform virtual pullback and pushforward of this class to control it. By Aluffi's Segre class formula, we see that the classes of interest in Fulton's formula are sums over strata $W$ in $t\DR(\sV)$ of classes are obtained in the following steps:
\begin{enumerate}[(i)]
\item Choose any piecewise polynomial $p$ on $t\DR(\sV)$ and pull it back to $W$;
\item Choose a polynomial $c$ in the Chern classes of the normal bundle to $Z\langle 1 \rangle$;
\item Perform the smooth pullback along $\varpi$ of the Chow homology class $c\cup p\cap [W]$ to obtain a class on $\mathbb P_W\langle 1 \rangle$;
\item Apply any polynomial in the Chern class of the hyperplane bundle on $\mathbb P_W\langle 1 \rangle$. 
\item Perform virtual pullback along $\epsilon$.
\item Perform proper pushforward along the  composite morphism $\tau\circ\epsilon$. 
\end{enumerate}

We can absorb the terms $p$ and $c$ into a single cohomological term which we write as $q$. The hyperplane bundle is pulled back from $\mathbb P\langle 1\rangle$, and by applying the projection formula and commutativity of smooth and Gysin pullback, we can obtain the same class instead by \textit{first} calculating the $\varphi\circ\kappa$ pushforward of the class $\varphi^!(q\cap [W])$, pulling back to $\mathbb P\langle 1 \rangle$, applying operators coming from the hyplerplane bundle and pushing forward to $\Mbar_{g,n}\langle 1\rangle$. We will prove that $\varphi^!(q\cap [W])$ pushes forward to a tautological class. 

The virtual stratum $\mathsf{DR}_W(\sV)$ may be written as a product of smaller strata by the gluing formula for strata in Gromov--Witten theory of rubber targets, see for instance~\cite[Section~5.3]{MR14} or~\cite[Section~3]{GV05}. The class $q$ is piecewise polynomial and therefore restricts to a tautological class on each of the relative stable maps spaces. We can now use the result of Faber and Pandharipande, that the pushforwards of tautological Gromov--Witten cycles, including those with disconnected domain curves, to the moduli space of curves are tautological~\cite[Theorem~2]{FP}. We conclude that the virtual strict transform under one blowup is tautological. 
\end{proof}

\subsection{The inductive procedure} The calculations performed for the first blowup guarantee that $\DR(\sV)\langle 1 \rangle$ is equal to the pullback of a tautological class on the moduli space of curves, corrected by classes on strata that come from piecewise polynomials. Practically, these are pushforwards from strata of expressions in the Chern roots of the normal bundles to those strata. 

We now climb up the tower of blowups of $\Mbar_{g,n}$. At each stage, we blowup a smooth center, and we calculate the strict transform exactly as in the previous section. Subtrating off the correction terms at each stage, eventually, we are left with the class $\DR^\lightning(\sV)$. 

\noindent 
{\bf Notation.} The nature of the inductive procedure means that this final section is notation heavy. We will use $\Mbar_{g,n}\langle k\rangle$ to denote the $k^{th}$ blowup of the moduli space of curves, and $t\DR(\sV)\langle k \rangle$ to be the \textit{strict} {transform} of $t\DR(\sV)$ under this blowup. The superscript $(-)^{\mathsf{tot}}$ is used for the \textit{total} transform and $(-)^{\mathsf{exc}}$ is used for objects living over the exceptional divisor of the blowup. 

\noindent
\textsc{The inductive setup.} Let us now describe the inductive step after $k$ blowups have been performed. The setup is as follows:
\[
\begin{tikzcd}
\DR(\sV)\langle k+1\rangle \arrow[hookrightarrow]{r}\arrow{d}\drar[phantom, "\square"] & \DR(\sV)\langle k\rangle^{\mathsf{tot}}\arrow{d}\arrow{r} \drar[phantom, "\square"] & \DR(\sV)\langle k\rangle\arrow{d}{}\\
t\DR(\sV)\langle k+1\rangle \arrow[hookrightarrow]{r}& t\DR(\sV)\langle k\rangle^{\mathsf{tot}}\arrow{d}\arrow{r} \drar[phantom, "\square"] & t\DR(\sV)\langle k\rangle\arrow{d}{\kappa}\\
&\Mbar_{g,n}\langle k+1\rangle\arrow{r}{\nu}& \Mbar_{g,n}\langle k\rangle .
\end{tikzcd}
\]
We know from the earlier steps in the recursion that the class $\DR(\sV)\langle k\rangle$ pushes forward in $\Mbar_{g,n}\langle k\rangle$ to a class that differs from the pullback of a tautological class on the moduli space of curves by classes that come from piecewise polynomials and cotangent classes. This class is pulled back along the next blowup, which a local complete intersection morphism. The pullback is the of the virtual total transform, and we are interested in the virtual strict transform. 

The virtual strict transform, as before, is controlled by Aluffi's Segre class formula. The correction term in the middle row is a sum of classes involving the hyperplane bundle on the exceptional divisor and classes on strata that come from piecewise polynomials.\footnote{We repurpose the notation from the previous section.} By treating the terms one at a time, we may focus on a fixed stratum $W$ of $ t\DR(\sV)\langle k\rangle$. 

\noindent
\textsc{The terms over a fixed stratum.} Restrict the diagram above to a stratum $W$ of $t\DR(\sV)\langle k \rangle$. We have the following diagram, over the exceptional: 
\[
\begin{tikzcd}
\mathsf{DR}_W^{\mathsf{exc}}(\sV)\langle k\rangle\arrow{r}\arrow{d}{\epsilon} \drar[phantom, "\square"] &\mathsf{DR}_W(\sV)\arrow{d}{\varphi}\langle k\rangle \\
\mathbb P_W\langle k\rangle \arrow{d}{\tau}\arrow{r}{g}\drar[phantom, "\square"] & W \arrow{d}{\kappa} \\
\mathbb P\langle k \rangle \arrow{r}{\varpi} & Z\langle k \rangle.
\end{tikzcd}
\]
The horizontal arrows are all projective bundles, and in particular are flat. We recall that the definition of correction term in the blowup: (i) place a piecewise polynomial on $W$, (ii) apply Gysin pullback along $\varphi$, (iii) refined pullback along $\varpi$, and (v) apply the Chern class in the excess normal bundle. 

By compatibility, we can instead apply $\varphi^!$ to an operator on $W$ and push it all the way to $Z\langle k \rangle$, before pulling back to the bundle and only then the excess operator. Therefore we examine the class $\varphi^!(p\cap [W])$ pushed forward to $Z\langle k \rangle$ where $p$ is a piecewise polynomial. There is a sequence of maps
\[
W\hookrightarrow t\DR(\sV)\langle k\rangle \to t\DR(\sV). 
\]
The first map is an inclusion of a stratum. The composite morphism is a partially compactified torus bundle over a stratum in the second codomain. Denote this latter stratum by $W^\flat$ and write the composite morphism above as
\[
W\to W^\flat. 
\]
By hypothesis, the stratum $W^\flat$ is irreducible, so its fundamental class pulls back to the fundamental class of $W$, using the diagonal pullback of~\cite[Section~8.1]{Ful84}. The stratum $W$ is contained in $t\DR(\sV)$ and is therefore equipped with a refined virtual class by virtual pullback along the map $r: \DR(\sV)\to t\DR(\sV)$. The correction is calculated by refined virtual pullback of this virtual class along $W\to W^\flat$, decorating by piecewise polynomials, and pushing forward to $\PP\langle k \rangle$. The diagonal pullback commutes with the virtual pullback along $r: \DR(\sV)\to t\DR(\sV)$, so we focus our attention on the class $r^![W^\flat]$.

\noindent
\textsc{Decomposing the stratum.} The key fact that we will now use is that strata of the double ramification cycle can be expressed as finite quotients of products of smaller double ramification cycle. This allows us to run the induction; let us now provide the details. 

The class $r^![W^\flat]$ is a virtual stratum in a space of relative stable maps, to which we may apply the gluing formula. By the discussion in Section~\ref{sec: tropical-DR} stratum $W^\flat$ corresponds uniquely to a stratum in the moduli space of maps to expansions of the rubber $\mathbb P^1$ geometry, relative to $0$ and $\infty$. A stratum is indexed by a rubber combinatorial type $[\Gamma\to T]$ where as before, $T$ is a line graph and $\Gamma$ is a weighted dual graph. The combinatorial type fixes the ramification orders along all edges and half edges. 

By cutting all the edges of the graph $T$ above, we obtain a collection of tropical maps $[\Gamma_i\to T_i]$ where $T_i$ is a $1$- vertex graph with $2$ legs, with decorations of all edge flags by slopes.  The graph $\Gamma_i$ determines a moduli space $\cM_{\Gamma_i}$ of curves whose genus is the genus label at $\Gamma$ and whose number of markings is equal to the number of flags at this vertex. A graph cover $\Theta = [\Gamma_i\to T_i]$ of this type determines a double ramification cycle problem. 

We obtain a product decomposition, parallel to Section~\ref{sec: proj-bundle}
\[
 W^\flat\to W^\flat_{\Gamma_1}\times\cdots\times W^\flat_{\Gamma_r}/\mathsf{Aut}(\Theta),
\]
exhibiting the stratum as a torus bundle, as we have just explained in Section~\ref{sec: proj-bundle}. These come with numerical specifications $\mathsf{U}_1,\ldots \mathsf{U}_r$ of contact orders for a smaller double ramification problem. We choose any compactification of the map with smooth source and target space to obtain:
\[
\overline W^\flat \to \overline W^\flat_{\Gamma_1}\times \cdots\times\overline W^\flat_{\Gamma_r}/\mathsf{Aut}(\Theta).
\]
The external product on Chow groups gives rise to a \textit{split DR cycle} that is picked out individually on the factors
\[
\DR(\mathsf{U}_1)\times\cdots \times \DR(\mathsf{U}_r)\in \mathsf{CH}_\star(\overline W^\flat;\QQ).
\]
On the other hand, $\overline W^\flat$ also carries a double ramification cycle: the virtual pullback on the stratum $W^\flat$ under the Abel--Jacobi morphism produces a refined class which we pushforward to the compact $\overline W^\flat$. 

%


We come to the final result. Fix a sequence of blowups giving rise to a morphism
\[
\nu_m: \Mbar_{g,n}\langle m \rangle \to \Mbar_{g,n}
\]

\begin{theorem}
The class $[\DR(\sV)\langle m\rangle]$ in the Chow ring of $\Mbar_{g,n}\langle m\rangle$ of the virtual strict transform lies in the subring generated by the pullbacks of tautological classes from the moduli space of curves and Chern roots of the normal bundles to the strata of $\Mbar_{g,n}\langle m\rangle$. In particular, the class $[\DR^\lightning(\sV)]$ is tautological. 
\end{theorem}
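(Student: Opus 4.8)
The plan is to prove the statement by induction on the number $m$ of blowups in the tower $\Mbar_{g,n}\langle m\rangle\to\cdots\to\Mbar_{g,n}$, taking the first-blowup Proposition as both the base case and the template for the inductive step. Throughout I work with the factorization $\DR(\sV)\to t\DR(\sV)\to\Mbar_{g,n}$, in which the second arrow is a birational (open in a) logarithmic modification carrying all the tropical complexity, while the first arrow is strict and virtually smooth of codimension $g$ and carries all the virtual complexity. The virtual \emph{total} transform pushes forward to the Chow-cohomological pullback of $[\DR(\sV)]$, which is tautological by Faber--Pandharipande and Janda--Pandharipande--Pixton--Zvonkine~\cite{FP,JPPZ}; so the entire content is to show that the difference between the virtual total and virtual strict transforms stays inside the claimed subring at every level of the tower.

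For the inductive step I assume that $[\DR(\sV)\langle k\rangle]$, regarded as a class in $\mathsf{CH}_\star(\Mbar_{g,n}\langle k\rangle)$, lies in the subring generated by pullbacks of tautological classes and Chern roots of normal bundles to strata, and I consider a single blowup $\nu\colon\Mbar_{g,n}\langle k+1\rangle\to\Mbar_{g,n}\langle k\rangle$ at a smooth stratum. Applying Fulton's refined blowup formula~\cite[Example~6.7.1]{Ful84} to $t\DR(\sV)\langle k\rangle$ produces an equality $\nu^![t\DR(\sV)\langle k\rangle]-[t\DR(\sV)\langle k+1\rangle]=[\underline{\textsf{correction}}]$ with the correction supported over the exceptional divisor. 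By Theorem~\ref{thm: strict-transform} together with Aluffi's Segre formula, this correction is a sum over strata $W$ of $t\DR(\sV)\langle k\rangle$ of classes built from piecewise polynomials on $W$, Chern classes of the normal bundle of the center, and the fiberwise hyperplane class of the exceptional projective bundle. I then apply virtual pullback along $\varphi$ and proper pushforward; using the projection formula, the commutation of smooth and Gysin pullbacks, and the diagonal pullback of~\cite[Section~8.1]{Ful84}, the computation reduces to understanding $\varphi^!(p\cap[W])$, and ultimately the refined class $r^![W^\flat]$ on the stratum $W^\flat\subset t\DR(\sV)$ that $W$ lies over.

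The key geometric input is that $r^![W^\flat]$ is a virtual stratum in a space of relative stable maps to rubber $\mathbb P^1$, so the gluing formula expresses it as a split double ramification cycle: after cutting the target line graph $T$, the stratum becomes a (partially compactified) torus bundle over a product $\prod_u W^\flat_{\Gamma_u}$ of smaller double ramification spaces, exactly as in Section~\ref{sec: proj-bundle}. Proposition~\ref{prop: projective-bundle-tautological} then converts the pushforward of the stratum class into the pullback of the factorwise double ramification cycle capped with a power of the fiberwise hyperplane class $H$. A piecewise-polynomial decoration restricts to a tautological class on each relative-stable-maps factor, so I may invoke Faber--Pandharipande~\cite[Theorem~2]{FP}, which guarantees that the pushforward to the moduli of curves of a (possibly disconnected) tautological Gromov--Witten cycle is tautological. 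This shows each summand of $\varphi^!(p\cap[W])$ pushes forward to a tautological class; decorating by the hyperplane and excess-normal-bundle operators keeps everything inside the claimed subring, and since the $\nu$-pullback of the inductive hypothesis class expands into pullbacks of tautological classes plus normal-bundle and exceptional contributions that again lie in the subring, the induction closes. The final ``in particular'' follows because the lightning class is the pushforward of $[\DR(\sV)\langle m\rangle]$ to $\Mbar_{g,\sV}=\Mbar_{g,n}\langle m\rangle$, and the Chern roots of normal bundles to boundary strata are themselves tautological on the blowup.

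I expect the main obstacle to be the uniform control of the \emph{interaction between the virtual class and the tropical correction} across the whole tower. Concretely, one must verify that virtual pullback commutes with the diagonal and flat pullbacks appearing at each stage, so that the gluing formula applies to $r^![W^\flat]$ without modification, and that the piecewise-polynomial decorations produced by Aluffi's formula genuinely restrict to \emph{tautological} classes on each glued factor---rather than merely operational ones---so that~\cite[Theorem~2]{FP} is applicable. Keeping the decorations within the subring generated by pullbacks of tautological classes and normal-bundle Chern roots, while the strata, their normal bundles, and the exceptional projective bundles all change under each successive blowup, is the principal bookkeeping difficulty, and it is precisely here that the self-similarity of the boundary guaranteed by the gluing formula is essential.
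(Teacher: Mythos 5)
Your proposal is correct and follows essentially the same route as the paper: induction on the length of the blowup tower, Fulton's refined blowup formula plus Aluffi's Segre formula to express the correction as piecewise-polynomially decorated strata classes, reduction of the virtual correction on a stratum $W$ to the refined class $r^![W^\flat]$ on $t\DR(\sV)$, and then the gluing formula together with Proposition~\ref{prop: projective-bundle-tautological} and~\cite[Theorem~2]{FP} to land in the tautological ring. The only cosmetic difference is that the paper makes the secondary induction on the size of the double ramification problem (genus and marking count of the vertex factors) explicit and uses weak factorization to pass between the chosen compactification $\overline W^\flat$ and the one from Construction~\ref{compactify-cut}, whereas you fold these into the direct appeal to Faber--Pandharipande; this does not change the substance of the argument.
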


\begin{proof}
First note that in genus $0$ the lightning class is uncorrected for all contact data. In all genus, the class is uncorrected when $\sV$ is the $0$ vector or when it is $(1,-1)$. Consider a proper substratum of the double ramification cycle. Each proper stratum of $t\DR(\sV)$ gives rise to smaller double ramification cycle problems -- either the genus or the total number of markings decreases -- and we may inductively assume that all the associated lightning classes are known to be tautological. 

We induct on the number of blowups. We have completed the base case, and come to the inductive step. By the discussion above, the correction class is the pushforward of a sum of classes supported on the exceptional divisor of the blowup. Each of these classes corresponds to a stratum and we call it $W$. As before, we have
\[
\begin{tikzcd}
\mathsf{DR}_W^{\mathsf{exc}}(\sV)\langle k+1\rangle\arrow{r}\arrow{d}{\epsilon} \drar[phantom, "\square"] &\mathsf{DR}_W(\sV)\arrow{d}{\varphi}\langle k+1\rangle \\
\mathbb P_W\langle k+1\rangle \arrow{d}{\tau}\arrow{r}{g}\drar[phantom, "\square"] & W \arrow{d}{\kappa} \\
\mathbb P\langle k+1 \rangle \arrow{r}{\varpi} & Z\langle k+1 \rangle
\end{tikzcd}
\]
and it will suffice for us to show that if we start with a class of the form $p\cap[W]$, apply the virtual pullback along $\varphi$, following by pushforward to $Z\langle k+1 \rangle$, the result is a tautological class. As noted already, the refined class $\varphi^![W]$ is pulled back from the corresponding refined class on $W^\flat\subset t\DR(\sV)$. We compactify, as above
\[
\overline W\to \overline W^\flat
\]
and observe that any sufficiently fine compactification of $\overline W$ will map to $Z\langle k+1\rangle$. It suffices then to show that the following class on $\overline W$ is inductively tautological: (i) compute the refined virtual class on $W^\flat$ obtained via the virtual pullback $r:\DR(\sV)\to t\DR(\sV)$; (ii) pushforward to a sufficiently fine compactification $\overline W^\flat$; (iii) pullback along $\overline W\to \overline W^\flat$; (iv) apply a piecewise polynomial operator $p$. 

The operator $p$ can be arranged to be the restriction of an operator on the compactification, which we continue to denote it $p$. It will therefore suffice to show that the virtual class on $\overline W^\flat$ is tautological. By the discussion in Section~\ref{sec: proj-bundle}, and the weak factorization theorem, we may connect the compactification $\overline W^\flat$ by a sequence of blowups and blow-downs along smooth centers, to the compactification studied in Construction~\ref{compactify-cut}. We recall that this construction outputs a projective bundle,
\[
\mathsf{kut}: \widetilde{\cM}_{[\Gamma\to T]}\to \prod_{u\in V(\Gamma)} \Mbar(u).
\]
The source carries a double ramification class, as it contains $W^\flat$, and we can pushforward the refined virtual class here. As this class is stable under pullbacks along blowups, it suffices to show that the double ramification class on $\widetilde{\cM}_{[\Gamma\to T]}$ is tautological. We apply Proposition~\ref{prop: projective-bundle-tautological}, which reduces the question to the smaller double ramification problems associated to the vertices of $\Gamma$. These are known to be tautological inductively, and we conclude the result. 

\end{proof}

\begin{corollary}
The toric contact cycles lie in the tautological ring of the moduli space of curves. 
\end{corollary}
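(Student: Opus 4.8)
The plan is to transport the product formula of Theorem~\ref{thm: product-formula} to a common refinement, invoke the tautological-ness of the lightning classes established in the preceding theorem, and then push everything forward at once. The essential structural point is that the product must be formed \emph{on the blowup}: on $\Mbar_{g,n}$ itself the toric contact cycle is not the product of double ramification cycles, so one cannot push the factors down individually and multiply.

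First I would fix a single sufficiently fine smooth blowup $\nu\colon \Mbar_{g,\underline\sV}\to\Mbar_{g,n}$, arising from a complete smooth subdivision $\cM^{\trop}_{g,\underline\sV}\to\cM^{\trop}_{g,n}$ that (i) refines each $\DR(\sV_i)^{\trop}$ so that the hypotheses of the product diagram preceding Theorem~\ref{thm: product-formula} hold, and (ii) is fine enough to run the inductive blowup tower of the preceding theorem for every factor. Since further subdivisions are cofinal and the lightning class is stable under blowup by the Proposition of Section~\ref{sec: lightning-class}, such a common model exists. On it, Theorem~\ref{thm: product-formula} gives $[\TC(\underline\sV)^\dagger]^{\mathsf{vir}}=\Delta_{\underline\sV}^![\DR(\sV_1)^\dagger\times\DR(\sV_2)^\dagger]^{\mathsf{vir}}$, and the same argument applies verbatim for the general $r$-factor case. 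Pushing the external virtual class forward to $\Mbar_{g,\underline\sV}\times\Mbar_{g,\underline\sV}$ yields the external product of the lightning classes, and since $\Delta_{\underline\sV}^!$ of an external product is the cup product, the pushforward of $[\TC(\underline\sV)^\dagger]^{\mathsf{vir}}$ to $\Mbar_{g,\underline\sV}$ equals $\prod_i [\DR^\lightning(\sV_i)]$.

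By the preceding theorem, each factor $[\DR^\lightning(\sV_i)]$ lies in the subring of $\mathsf{CH}^\star(\Mbar_{g,\underline\sV};\QQ)$ generated by pullbacks of tautological classes from $\Mbar_{g,n}$ together with Chern roots of the normal bundles of the boundary strata. This subring is visibly closed under multiplication, so the product $\prod_i[\DR^\lightning(\sV_i)]$, hence the pushforward of the toric contact class to $\Mbar_{g,\underline\sV}$, lies in it as well. It then remains to push forward along $\nu$. Here I would use that $\nu$ is an iterated blowup along smooth boundary strata, that each such stratum is the image of a gluing morphism from a product of smaller moduli spaces, and that the normal directions to strata are governed by cotangent ($\psi$) classes; via these gluing maps and the projection formula the normal-bundle and exceptional classes push forward to standard tautological generators. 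This is precisely the mechanism already used in the proof of the preceding theorem, where the Faber--Pandharipande pushforward result~\cite[Theorem~2]{FP} guarantees that the relevant Gromov--Witten contributions land in the tautological ring. Consequently $\nu_\star$ carries the subring into $R^\star(\Mbar_{g,n})$, and since the toric contact cycle is by definition $\nu_\star$ of the toric contact class, it is tautological.

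The genuine content is entirely contained in Theorem~\ref{thm: product-formula} and the preceding theorem; the only subtlety in assembling them is the order of operations. Pushforward does not commute with the intersection product, so forming the product on $\Mbar_{g,\underline\sV}$ before descending is essential, and one must separately confirm that the tautological subring of the blowup maps into the tautological ring downstairs. I expect this last compatibility --- the stability of the tautological ring under pushforward along blowups at boundary strata --- to be the main point requiring care, although it is already implicit in the ``in particular'' clause of the preceding theorem, which is exactly the $r=1$ instance of the statement.
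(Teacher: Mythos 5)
Your proposal is correct and follows essentially the same route as the paper: form the product of the virtual strict transforms on a common sufficiently fine blowup via Theorem~\ref{thm: product-formula}, observe that each factor lies in the subring generated by pulled-back tautological classes and Chern roots of normal bundles to strata (which is closed under products), and then push forward along the iterated boundary blowup using Fulton's blowup/projective-bundle formulas and the projection formula to land in the tautological ring. The paper's own proof is terser but identical in substance, including your observation that the pushforward step is where the real care is needed.
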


\begin{proof}
We find a sequence of blowups $\Mbar_{g,n}\langle m \rangle\to \Mbar_{g,n}$ such that all double ramification cycles have been transversalized. By the projective bundle formula and Fulton's key formula~\cite[Thm. 3.3 {\it \&} Prop. 6.7]{Ful84}, under each blowup morphism a polynomial in the Chern roots of the normal bundle to a stratum pushes forward to a class spanned by tautological classes from the moduli space of curves and piecewise polynomial operators applied to the strata. These push forward to tautological classes on the moduli space of curves, and we conclude the main result. 
\end{proof}

\bibliographystyle{siam} 
\bibliography{Intersections}

\begin{thebibliography}{10}

\bibitem{ACP}
{\sc D.~Abramovich, L.~Caporaso, and S.~Payne}, {\em The tropicalization of the
  moduli space of curves}, Ann. Sci. {\'E}c. Norm. Sup{\'e}r., 48 (2015),
  pp.~765--809.

\bibitem{ACMUW}
{\sc D.~Abramovich, Q.~Chen, S.~Marcus, M.~Ulirsch, and J.~Wise}, {\em
  Skeletons and fans of logarithmic structures}, in Nonarchimedean and Tropical
  Geometry, M.~Baker and S.~Payne, eds., Simons Symposia, Springer, 2016,
  pp.~287--336.

\bibitem{ACMW}
{\sc D.~Abramovich, Q.~Chen, S.~Marcus, and J.~Wise}, {\em Boundedness of the
  space of stable logarithmic maps}, {J. Eur. Math. Soc.}, 19 (2017),
  pp.~2783--2809.

\bibitem{AK00}
{\sc D.~Abramovich and K.~Karu}, {\em Weak semistable reduction in
  characteristic 0}, Invent. Math., 139 (2000), pp.~241--273.

\bibitem{AKMW}
{\sc D.~Abramovich, K.~Karu, K.~Matsuki, and J.~W{\l}odarczyk}, {\em
  Torification and factorization of birational maps}, J. Amer. Math. Soc., 15
  (2002), pp.~531--572.

\bibitem{AW}
{\sc D.~Abramovich and J.~Wise}, {\em {Birational invariance in logarithmic
  Gromov--Witten theory}}, Comp. Math., 154 (2018), pp.~595--620.

\bibitem{ALT18}
{\sc K.~Adiprasito, G.~Liu, and M.~Temkin}, {\em Semistable reduction in
  characteristic 0}, arXiv:1810.03131,  (2018).

\bibitem{Alu05}
{\sc P.~Aluffi}, {\em Modification systems and integration in their chow
  groups}, Selecta Math., 11 (2005), pp.~155--202.

\bibitem{AluConj}
\leavevmode\vrule height 2pt depth -1.6pt width 23pt, {\em Segre classes of
  monomial schemes}, Electron. Res. Announc. Math. Sci., 20 (2013), pp.~55--70.

\bibitem{Alu13}
\leavevmode\vrule height 2pt depth -1.6pt width 23pt, {\em Segre classes as
  integrals over polytopes}, {J. Eur. Math. Soc.}, 18 (2016), pp.~2849--2863.

\bibitem{AP15}
{\sc D.~Anderson and S.~Payne}, {\em Operational {K}-theory}, Doc. Math., 20
  (2015), pp.~357--399.

\bibitem{BHPSS}
{\sc Y.~Bae, D.~Holmes, R.~Pandharipande, J.~Schmitt, and R.~Schwarz}, {\em
  {Pixton's formula and Abel-Jacobi theory on the Picard stack}},
  arXiv:2004.08676,  (2020).

\bibitem{BP21}
{\sc Y.~Bae and H.~Park}, {\em A comparison theorem for cycle theories for
  algebraic stacks}, In preparation.

\bibitem{Bar18}
{\sc L.~J. Barrott}, {\em {Logarithmic Chow theory}}, arXiv:1810.03746,
  (2018).

\bibitem{BvBvG}
{\sc C.~B{\"o}hning, H.-C.~G. von Bothmer, and M.~van Garrel}, {\em Prelog chow
  rings and degenerations}, arXiv:1911.08930,  (2019).

\bibitem{Br96}
{\sc M.~Brion}, {\em Piecewise polynomial functions, convex polytopes and
  enumerative geometry}, in Parameter spaces ({W}arsaw, 1994), vol.~36 of
  Banach Center Publ., Polish Acad. Sci. Inst. Math., Warsaw, 1996, pp.~25--44.

\bibitem{CCUW}
{\sc R.~Cavalieri, M.~Chan, M.~Ulirsch, and J.~Wise}, {\em A moduli stack of
  tropical curves}, {Forum Math. Sigma}, 8 (2020), pp.~1--93.

\bibitem{CMR14b}
{\sc R.~Cavalieri, H.~Markwig, and D.~Ranganathan}, {\em {Tropical
  compactification and the Gromov--Witten theory of $\mathbf P^1$}}, Selecta
  Math., 23 (2017), pp.~1027--1060.

\bibitem{CMR22}
\leavevmode\vrule height 2pt depth -1.6pt width 23pt, {\em Pluricanonical
  cycles and tropical covers}, arXiv:2206.14034,  (2022).

\bibitem{CheDegForm}
{\sc Q.~Chen}, {\em The degeneration formula for logarithmic expanded
  degenerations}, J. Algebr. Geom., 23 (2014), pp.~341--392.

\bibitem{CHL20}
{\sc Y.-C. Chou, L.~Herr, and Y.-P. Lee}, {\em {The Log Product Formula in
  quantum $K$-theory}}, arXiv:2012.09379,  (2020).

\bibitem{CLS11}
{\sc D.~A. Cox, J.~B. Little, and H.~K. Schenck}, {\em Toric varieties},
  vol.~124 of Graduate Studies in Mathematics, American Mathematical Society,
  Providence, RI, 2011.

\bibitem{FP}
{\sc C.~Faber and R.~Pandharipande}, {\em {Relative maps and tautological
  classes.}}, {J. Eur. Math. Soc. (JEMS)}, 7 (2005), pp.~13--49.

\bibitem{Ful84}
{\sc W.~Fulton}, {\em Intersection theory}, vol.~2 of Ergebnisse der Mathematik
  und ihrer Grenzgebiete. 3. Folge. A Series of Modern Surveys in Mathematics
  [Results in Mathematics and Related Areas. 3rd Series. A Series of Modern
  Surveys in Mathematics], Springer-Verlag, Berlin, second~ed., 1998.

\bibitem{FS97}
{\sc W.~Fulton and B.~Sturmfels}, {\em Intersection theory on toric varieties},
  Topology, 36 (1997), pp.~335--353.

\bibitem{GK15}
{\sc J.~Gonz{\'a}lez and K.~Karu}, {\em Bivariant algebraic cobordism}, Algebra
  \& Number Theory, 9 (2015), pp.~1293--1336.

\bibitem{GV05}
{\sc T.~Graber and R.~Vakil}, {\em Relative virtual localization and vanishing
  of tautological classes on moduli spaces of curves}, Duke Math. J., 130
  (2005), pp.~1--37.

\bibitem{Gue16}
{\sc J.~Gu{\'e}r{\'e}}, {\em A generalization of the double ramification cycle
  via log-geometry}, arXiv:1603.09213,  (2016).

\bibitem{Herr}
{\sc L.~Herr}, {\em {The Log Product Formula}}, arXiv:1908.04936,  (2019).

\bibitem{Hol17}
{\sc D.~Holmes}, {\em Extending the double ramification cycle by resolving the
  {A}bel-{J}acobi map}, J. Inst. Math. Jussieu, 20 (2021), pp.~331--359.

\bibitem{HMPPS}
{\sc D.~Holmes, S.~Molcho, R.~Pandharipande, A.~Pixton, and J.~Schmitt}, {\em
  Logarithmic double ramification cycles}, arXiv:2207.06778,  (2022).

\bibitem{HPS19}
{\sc D.~Holmes, A.~Pixton, and J.~Schmitt}, {\em Multiplicativity of the double
  ramification cycle}, Doc. Math., 24 (2019), pp.~545--562.

\bibitem{HS21}
{\sc D.~Holmes and R.~Schwarz}, {\em Logarithmic intersections of double
  ramification cycles}, arXiv:2104.11450,  (2021).

\bibitem{Jan17}
{\sc F.~Janda}, {\em Gromov--witten theory of target curves and the
  tautological ring}, Michigan Mathematical Journal, 66 (2017), pp.~683--698.

\bibitem{JPPZ}
{\sc F.~Janda, R.~Pandharipande, A.~Pixton, and D.~Zvonkine}, {\em Double
  ramification cycles on the moduli spaces of curves}, Publ. Math. IH{\'E}S,
  125 (2017), pp.~221--266.

\bibitem{KKN20}
{\sc T.~Kajiwara, K.~Kato, and C.~Nakayama}, {\em Logarithmic abelian
  varieties, {Part VII}: Moduli}, arXiv:2009.08178,  (2020).

\bibitem{KP08}
{\sc E.~Katz and S.~Payne}, {\em Piecewise polynomials, minkowski weights, and
  localization on toric varieties}, Algebra Number Theory, 2 (2008),
  pp.~135--155.

\bibitem{KKMSD}
{\sc G.~Kempf, F.~Knudsen, D.~Mumford, and B.~Saint-Donat}, {\em Toroidal
  embeddings {I}}, Lecture Notes in Mathematics, 339 (1973).

\bibitem{Ki92}
{\sc S.-i. Kimura}, {\em Fractional intersection and bivariant theory}, Comm.
  Algebra, 20 (1992), pp.~285--302.

\bibitem{MW17}
{\sc S.~Marcus and J.~Wise}, {\em Logarithmic compactification of the
  {A}bel-{J}acobi section}, Proc. Lond. Math. Soc. (3), 121 (2020),
  pp.~1207--1250.

\bibitem{MR20}
{\sc D.~Maulik and D.~Ranganathan}, {\em {Logarithmic Donaldson-Thomas
  theory}}, arXiv:2006.06603,  (2020).

\bibitem{Mol16}
{\sc S.~Molcho}, {\em Universal stacky semistable reduction}, Isr. J. Math.,
  (2021), pp.~1--28.

\bibitem{Mol22}
\leavevmode\vrule height 2pt depth -1.6pt width 23pt, {\em Pullbacks of
  brill-noether classes under abel-jacobi sections}, arXiv:2212.14368,  (2022).

\bibitem{MPS20}
{\sc S.~Molcho, R.~Pandharipande, and J.~Schmitt}, {\em {The Hodge bundle, the
  universal 0-section, and the log Chow ring of the moduli space of curves}},
  arXiv:2101.08824,  (2021).

\bibitem{MR14}
{\sc S.~Molcho and E.~Routis}, {\em Localization for logarithmic stable maps},
  Trans. Amer. Math. Soc. Ser. B, 6 (2019), pp.~80--113.

\bibitem{MW18}
{\sc S.~Molcho and J.~Wise}, {\em {The logarithmic Picard group and its
  tropicalization}}, arXiv:1807.11364,  (2018).

\bibitem{Mum72}
{\sc D.~Mumford}, {\em An analytic construction of degenerating abelian
  varieties over complete rings}, Compos. Math., 24 (1972), pp.~239--272.

\bibitem{NabijouThesis}
{\sc N.~Nabijou}, {\em Recursion formulae in logarithmic {G}romov--{W}itten
  theory and quasimap theory}, 2018.
\newblock PhD thesis, Imperial College London.

\bibitem{NR19}
{\sc N.~Nabijou and D.~Ranganathan}, {\em {Gromov-Witten theory with maximal
  contacts}}, arXiv:1908.04706,  (2019).

\bibitem{Oes19}
{\sc J.~Oesinghaus}, {\em Quasisymmetric functions and the chow ring of the
  stack of expanded pairs}, Res. Math. Sci., 6 (2019), pp.~1--18.

\bibitem{Ols03}
{\sc M.~C. Olsson}, {\em Logarithmic geometry and algebraic stacks}, Ann. Sci.
  {\'E}c. Norm. Sup{\'e}r., 36 (2003), pp.~747--791.

\bibitem{Pay06}
{\sc S.~Payne}, {\em Equivariant {C}how cohomology of toric varieties}, Math.
  Res. Lett., 13 (2006), pp.~29--41.

\bibitem{R19}
{\sc D.~Ranganathan}, {\em {Logarithmic Gromov-Witten theory with expansions}},
  arXiv:1903.09006,  (2019).

\bibitem{R19b}
\leavevmode\vrule height 2pt depth -1.6pt width 23pt, {\em A note on cycles of
  curves in a product of pairs}, arXiv preprint arXiv:1910.00239,  (2019).

\bibitem{R20}
{\sc D.~{Ranganathan}}, {\em {Gromov-Witten theory and logarithmic intersection
  theory}}.
\newblock Lecture at ETH Seminar: Algebraic Geometry and Moduli, notes at:
  \url{https://www.dpmms.cam.ac.uk/~dr508/ETHApril2020.pdf}, April 2020.

\bibitem{RUK22}
{\sc D.~Ranganathan and A.~U. Kumaran}, {\em Logarithmic gromov-witten theory
  and double ramification cycles}, arXiv:2212.11171,  (2022).

\bibitem{Tev07}
{\sc J.~Tevelev}, {\em Compactifications of subvarieties of tori}, Amer. J.
  Math., 129 (2007), pp.~1087--1104.

\bibitem{stacks-project}
{\sc {The Stacks Project Authors}}, {\em \itshape stacks project}.
\newblock \url{http://stacks.math.columbia.edu}, 2019.

\bibitem{Thu07}
{\sc A.~Thuillier}, {\em {G{\'e}om{\'e}trie toro{\"\i}dale et g{\'e}om{\'e}trie
  analytique non archim{\'e}dienne. Application au type d'homotopie de certains
  sch{\'e}mas formels}}, Manuscripta Math., 123 (2007), pp.~381--451.

\bibitem{U13}
{\sc M.~Ulirsch}, {\em Functorial tropicalization of logarithmic schemes: the
  case of constant coefficients}, Proc. Lond. Math. Soc., 114 (2017),
  pp.~1081--1113.

\bibitem{U21}
\leavevmode\vrule height 2pt depth -1.6pt width 23pt, {\em A non-archimedean
  analogue of {Teichm{\"u}ller} space and its tropicalization}, Selecta Math.,
  27 (2021), pp.~1--34.

\bibitem{UZ19}
{\sc M.~Ulirsch and D.~Zakharov}, {\em Tropical double ramification loci},
  arXiv:1910.01499,  (2019).

\end{thebibliography}

\end{document}